\newtheorem{definition}{Definition}[section]
\newtheorem{theorem}[definition]{Theorem}
\newtheorem{lemma}[definition]{Lemma}
\newtheorem{corollary}[definition]{Corollary}
\newtheorem{example}[definition]{Example}
\newtheorem{proposition}[definition]{Proposition}
\def\makeCal#1{%
\expandafter\newcommand\csname c#1\endcsname{\mathcal{#1}}}
\def\makeBf#1{%
\expandafter\newcommand\csname b#1\endcsname{\mathbf{#1}}}
\def\makeBb#1{%
\expandafter\newcommand\csname m#1\endcsname{\mathbb{#1}}}
\def\makeFrak#1{%
\expandafter\newcommand\csname f#1\endcsname{\mathfrak{#1}}}
\def\makeScr#1{%
\expandafter\newcommand\csname s#1\endcsname{\mathscr{#1}}}
\edef\y{\@Alph\count@}%
\DeclareMathOperator{\Cat}{Cat}
\DeclareMathOperator{\m1}{\mathbbm{1}}
\newcommand{\tG}{\tilde{G}}
\newcommand{\qpermute}[2]{\genfrac{\{}{\}}{0pt}{}{#1}{#2}_q}
\title{A uniform approach to the Damiani, Beck, and alternating PBW bases for the positive part of $U_q(\widehat{\mathfrak{sl}}_2)$}
\author{Chenwei Ruan}
\date{}
\begin{document}
\maketitle

\begin{abstract}
This paper is about the positive part $U_q^+$ of the $q$-deformed enveloping algebra $U_q(\widehat{\mathfrak{sl}}_2)$. The literature contains at least three PBW bases for $U_q^+$, called the Damiani, the Beck, and the alternating PBW bases. These PBW bases are related via exponential formulas. In this paper, we introduce an exponential generating function whose argument is a power series involving the Beck PBW basis and an integer parameter $m$. The cases $m=2$ and $m=-1$ yield the known exponential formulas for the Damiani and alternating PBW bases, respectively. The case $m=1$ appears in the author's previous paper. In the present paper, we give a comprehensive study of the generating function for an arbitrary integer $m$. We have two main results. The first main result gives a factorization of the generating function. In the second main result, we express the coefficients of the generating function in closed form. 

\bigskip
\noindent {\bf Keywords}. Catalan word; generating function; PBW basis; $q$-shuffle algebra. \\
{\bf 2020 Mathematics Subject Classification}. Primary: 17B37. Secondary: 05E16, 16T20, 81R50. 
\end{abstract}

\section{Introduction}
This paper involves the $q$-deformed enveloping algebra $U_q(\widehat{\mathfrak{sl}}_2)$ \cite{CP}. The algebra $U_q(\widehat{\mathfrak{sl}}_2)$ appears in the topics of combinatorics \cite{IT,JM,JKKKY,LIW}, quantum algebras \cite{baseilhac2,BG,drinfeld,FHZ,jimbo,watanabe}, and representation theory \cite{ariki,bittmann,FR,jing,XXZ}. We are interested in the positive part $U_q^+$ of $U_q(\widehat{\mathfrak{sl}}_2)$ \cite{CP,lusztig}. The algebra $U_q^+$ is associative, noncommutative, and infinite-dimensional. It has a presentation with two generators and two relations called the $q$-Serre relations; see \eqref{eq:qSerre1}, \eqref{eq:qSerre2} below. 

\medskip
\noindent In \cite{damiani}, Damiani obtained a Poincar\'e-Birkhoff-Witt (or PBW) basis for $U_q^+$. The basis elements $\{E_{n\delta+\alpha_0}\}_{n=0}^\infty$, $\{E_{n\delta+\alpha_1}\}_{n=0}^\infty$, $\{E_{n\delta}\}_{n=1}^\infty$ were defined recursively using a braid group action. 

\medskip
\noindent In \cite{beck}, Beck obtained a PBW basis for $U_q^+$ from the Damiani PBW basis, by replacing $E_{n\delta}$ with an element $E^{\rm Beck}_{n\delta}$ for $n \geq 1$. In \cite{BCP}, Beck, Chari, and Pressley showed that the elements $\{E_{n\delta}\}_{n=1}^\infty$ and $\{E^{\rm Beck}_{n\delta}\}_{n=1}^\infty$ are related via an exponential formula. 

\medskip
\noindent In \cite{rosso1,rosso2}, Rosso introduced an embedding of $U_q^+$ into a $q$-shuffle algebra. In Section 2 we will review this algebra in detail, and for now we give a quick preview. Let $\mV$ denote the free associative algebra with two generators $x,y$. A free product of generators is called a word. The vector space $\mV$ has a basis consisting of the words. In \cite{rosso1,rosso2}, Rosso introduced a second algebra structure on $\mV$, called the $q$-shuffle algebra. He then gave an embedding of $U_q^+$ into the $q$-shuffle algebra $\mV$. 

\medskip
\noindent In \cite{ter_catalan}, Terwilliger used the Rosso embedding to obtain a closed form for the Damiani PBW basis elements. This closed form involves some words of a certain type said to be Catalan. Let $\overline{x}=1$ and $\overline{y}=-1$. A word $a_1a_2 \cdots a_n$ is Catalan whenever $\overline{a}_1+\overline{a}_2+\cdots+\overline{a}_i \geq 0$ for $1 \leq i \leq n-1$ and $\overline{a}_1+\overline{a}_2+\cdots+\overline{a}_n=0$. The length of a Catalan word is even. For $n \geq 0$, Terwilliger introduced the element $C_n$ in $\mV$. He defined $C_n$ as a linear combination of the Catalan words of length $2n$. For a Catalan word $a_1a_2 \cdots a_{2n}$, its coefficient in $C_n$ is 
\begin{equation*}
\prod_{i=1}^{2n}[1+\overline{a}_1+\overline{a}_2+\cdots+\overline{a}_i]_q. 
\end{equation*}

\noindent In \cite[Theorem 1.7]{ter_catalan}, Terwilliger showed that the Rosso embedding sends 
\begin{equation*}
E_{n\delta+\alpha_0} \mapsto q^{-2n}(q-q^{-1})^{2n}xC_n, \hspace{4em} E_{n\delta+\alpha_1} \mapsto q^{-2n}(q-q^{-1})^{2n}C_ny
\end{equation*}
for $n \geq 0$, and 
\begin{equation*}
E_{n\delta} \mapsto -q^{-2n}(q-q^{-1})^{2n-1}C_n
\end{equation*}
for $n \geq 1$. 

\medskip
\noindent In \cite{ter_beck}, Terwilliger used the Rosso embedding to obtain a closed form for the elements $\{E^{\rm Beck}_{n\delta}\}_{n=1}^\infty$. He introduced the elements $\{xC_ny\}_{n=0}^\infty$ in $\mV$. In \cite[Theorem 7.1]{ter_beck}, Terwilliger showed that the Rosso embedding sends 
\begin{equation*}
E^{\rm Beck}_{n\delta} \mapsto \frac{[2n]_q}{n}q^{-2n}(q-q^{-1})^{2n-1}xC_{n-1}y
\end{equation*}

\noindent for $n \geq 1$. 

\medskip
\noindent In \cite{ter_alternating}, Terwilliger introduced the alternating PBW basis for $U_q^+$. He introduced the words 
\begin{align*}
&W_0=x, \hspace{1em} W_{-1}=xyx, \hspace{1em} W_{-2}=xyxyx, \hspace{1em} W_{-3}=xyxyxyx, \hspace{1em} \ldots \\
&W_1=y, \hspace{1em} W_2=yxy, \hspace{1em} W_3=yxyxy, \hspace{1em} W_4=yxyxyxy, \hspace{1em} \ldots \\
&G_1=yx, \hspace{1em} G_2=yxyx, \hspace{1em} G_3=yxyxyx, \hspace{1em} G_4=yxyxyxyx, \hspace{1em} \ldots \\
&\tG_1=xy, \hspace{1em} \tG_2=xyxy, \hspace{1em} \tG_3=xyxyxy, \hspace{1em} \tG_4=xyxyxyxy, \hspace{1em} \ldots 
\end{align*}

\noindent In \cite[Theorem 10.1]{ter_alternating}, Terwilliger showed that the words $\{W_{-n}\}_{n=0}^\infty$, $\{W_{n+1}\}_{n=0}^\infty$, $\{\tG_n\}_{n=1}^\infty$ form a PBW basis for $U_q^+$. In \cite[Theorem 9.15]{ter_alternating}, he gave $\{G_n\}_{n=1}^\infty$ in terms of $\{W_{-n}\}_{n=0}^\infty$, $\{W_{n+1}\}_{n=0}^\infty$, $\{\tG_n\}_{n=1}^\infty$. 

\medskip
\noindent In order to illuminate the algebraic structure of the above PBW bases, we consider the generating functions 
\begin{equation*}
C(t)=\sum_{n=0}^\infty C_nt^n, \hspace{4em} \tG(t)=\sum_{n=0}^\infty \tG_nt^n, 
\end{equation*}
where $\tG_0=1$. 

\noindent In \cite[Section 9]{ter_alternating}, Terwilliger considered the multiplicative inverse of $\tG(t)$ with respect to the $q$-shuffle product; this inverse is denoted by $D(t)$. Following \cite[Definition 9.11]{ter_alternating} we write 
\begin{equation*}
D(t)=\sum_{n=0}^\infty D_nt^n. 
\end{equation*}

\noindent In \cite{inverse}, the present author obtained a closed form for the elements $\{D_n\}_{n=0}^\infty$. We showed that for $n \geq 0$, the element $D_n$ is a linear combination of the Catalan words of length $2n$. Moreoever, for a Catalan word $a_1a_2 \cdots a_{2n}$, its coefficient in $D_n$ is 
\begin{equation*}
(-1)^n\prod_{i=1}^{2n}[\overline{a}_1+\overline{a}_2+\cdots+\overline{a}_{i-1}+(\overline{a}_i+1)/2]_q. 
\end{equation*}

\noindent As mentioned earlier, the elements $\{E_{n\delta}\}_{n=1}^\infty$ and $\{E^{\rm Beck}_{n\delta}\}_{n=1}^\infty$ are related via an exponential formula. In \cite[Section 8]{ter_beck}, Terwilliger used the Rosso embedding to reformulate this exponential formula as follows: 
\begin{equation}\label{eq:Cexp}
C(t)=\exp\left(\sum_{n=1}^\infty \frac{[2n]_q}{n}xC_{n-1}yt^n\right). 
\end{equation}

\noindent In \cite[Section 9]{ter_beck} Terwilliger showed 
\begin{equation}\label{eq:Dexp}
D(t)=\exp\left(\sum_{n=1}^\infty \frac{(-1)^n[n]_q}{n}xC_{n-1}yt^n\right), 
\end{equation}
\begin{equation}\label{eq:tGexp}
\tG(t)=\exp\left(-\sum_{n=1}^\infty \frac{(-1)^n[n]_q}{n}xC_{n-1}yt^n\right). 
\end{equation}

\noindent To summarize \eqref{eq:Cexp}--\eqref{eq:tGexp} in a uniform way, for an integer $m$ we consider the generating function 
\begin{equation}\label{eq:expgenfun}
\exp\left(\sum_{n=1}^\infty \frac{[mn]_q}{n}xC_{n-1}yt^n\right). 
\end{equation}

\noindent Setting $m=2,m=1,m=-1$ in $\eqref{eq:expgenfun}$, we get $C(t)$, $D(-t)$, $\tG(-t)$ respectively. 

\medskip
\noindent Motivated by this observation, in the present paper we investigate the generating function \eqref{eq:expgenfun} for
an arbitrary integer $m$. We have two main results. The first main result gives a factorization of \eqref{eq:expgenfun}. The factors involve $D(t)$ if $m$ is positive, and $\tG(t)$ if $m$ is negative. In the second main result, we express \eqref{eq:expgenfun} explicitly as a linear combination of Catalan words, and we give the coefficients. As we will see, for a Catalan word $a_1a_2 \cdots a_{2n}$, its coefficient in \eqref{eq:expgenfun} is equal to $t^n$ times 
\begin{equation*}
\prod_{i=1}^{2n}[\overline{a}_1+\overline{a}_2+\cdots+\overline{a}_{i-1}+m(\overline{a}_i+1)/2]_q. 
\end{equation*}

\noindent We will prove the two main results from scratch, without citing earlier results from the literature. 

\medskip
\noindent We will also obtain a third result which may be of independent interest. We will reprove \eqref{eq:Cexp}--\eqref{eq:tGexp} from scratch, without citing earlier results from the literature. 

\medskip
\noindent In Section 2, we review the $q$-shuffle algebra and give a precise statement of our main results. The rest of this paper is devoted to the proof of our main results. An outline of the proof is given at the end of Section 2. 

\section{Statement of the main results}
In this section, we motivate and state our main results. 

\medskip
\noindent Recall the integers $\mZ=\{0,\pm 1,\pm 2,\ldots\}$ and the natural numbers $\mN=\{0,1,2,\ldots\}$. Let $\mF$ denote a field of characteristic zero. All algebras in this paper are associative, over $\mF$, and have a multiplicative identity. Let $q$ denote a nonzero scalar in $\mF$ that is not a root of unity. For $n \in \mZ$, define 
\begin{equation}\label{eq:qint}
[n]_q=\frac{q^n-q^{-n}}{q-q^{-1}}. 
\end{equation}

\noindent Next we recall the positive part $U_q^+$ of the $q$-deformed enveloping algebra $U_q(\widehat{\mathfrak{sl}}_2)$ \cite{CP,lusztig}. The algebra $U_q^+$ is defined by the generators $A,B$ and the $q$-Serre relations 
\begin{equation}\label{eq:qSerre1}
A^3B-[3]_qA^2BA+[3]_qABA^2-BA^3=0, 
\end{equation}
\begin{equation}\label{eq:qSerre2}
B^3A-[3]_qB^2AB+[3]_qBAB^2-AB^3=0. 
\end{equation}

\noindent We will be discussing a $q$-shuffle algebra $\mV$. Let $x,y$ denote noncommuting indeterminates, and let $\mV$ denote the free algebra generated by $x,y$. We call $x$ and $y$ \textit{letters}. For $n \in \mN$, the product of $n$ letters is called a \textit{word} of \textit{length} $n$. The word of length $0$ is called \textit{trivial} and denoted by $\m1$. The words form a basis for the vector space $\mV$; this basis is called \textit{standard}. The vector space $\mV$ admits another algebra structure, called the $q$-shuffle algebra \cite{rosso1,rosso2}. The $q$-shuffle product is denoted by $\star$. The following recursive definition of $\star$ is adopted from \cite{green}. 

\begin{itemize}
\item For $v \in \mV$, 
	\begin{equation*}\label{star1}
	\m1 \star v=v \star \m1=v. 
	\end{equation*}
\item For the letters $u,v$, 
	\begin{equation*}\label{star2}
	u \star v=uv+vuq^{\langle u,v \rangle}, 
	\end{equation*}	
	where 
	 \begin{equation*}
	\langle x,x \rangle=\langle y,y \rangle =2, \hspace{4em}\langle x,y \rangle=\langle y,x \rangle=-2.
	\end{equation*}
\item For a letter $u$ and a nontrivial word $v=v_1v_2 \cdots v_n$ in $\mV$, 
	\begin{equation*}\label{star3.1}
	u \star v=\sum_{i=0}^n v_1 \cdots v_iuv_{i+1} \cdots v_n q^{\langle u,v_1 \rangle+\cdots+\langle u,v_i \rangle}, 
	\end{equation*}
	\begin{equation*}\label{star3.2}
	v \star u=\sum_{i=0}^n v_1 \cdots v_iuv_{i+1} \cdots v_n q^{\langle u,v_n \rangle+\cdots+\langle u,v_{i+1} \rangle}. 
	\end{equation*}
\item For nontrivial words $u=u_1u_2 \cdots u_r$ and $v=v_1v_2 \cdots v_s$ in $\mV$, 
	\begin{equation*}\label{star4.1}
	u \star v=u_1((u_2 \cdots u_r) \star v)+v_1(u \star (v_2 \cdots v_s))q^{\langle v_1,u_1 \rangle+\cdots+\langle v_1,u_r \rangle}, 
	\end{equation*}
	\begin{equation*}\label{star4.2}
	u \star v=(u \star (v_1 \cdots v_{s-1}))v_s+((u_1 \cdots u_{r-1}) \star v)u_rq^{\langle u_r,v_1 \rangle+\cdots+\langle u_r,v_s \rangle}. 
	\end{equation*}
\end{itemize}

\noindent It was shown in \cite{rosso1,rosso2} that the vector space $\mV$, together with the $q$-shuffle product $\star$, forms an algebra. This is the $q$-shuffle algebra. 

\medskip
\noindent Next we recall an embedding of $U_q^+$ into the $q$-shuffle algebra $\mV$. This embedding is due to Rosso \cite{rosso1,rosso2}. He showed that $x,y$ satisfy 
\begin{equation*}
x \star x \star x \star y-[3]_qx \star x \star y \star x+[3]_qx \star y \star x \star x-y \star x \star x \star x=0, 
\end{equation*}
\begin{equation*}
y \star y \star y \star x-[3]_qy \star y \star x \star y+[3]_qy \star x \star y \star y-x \star y \star y \star y=0. 
\end{equation*}
Consequentially there exists an algebra homomorphism $\natural$ from $U_q^+$ to the $q$-shuffle algebra $\mV$ that sends $A \mapsto x$ and $B \mapsto y$. It was shown in \cite[Theorem 15]{rosso2} that $\natural$ is injective. Let $U$ denote the image of $U_q^+$ under $\natural$. Observe that $U$ is the subalgebra of the $q$-shuffle algebra $\mV$ generated by $x,y$. We remark that this subalgebra is proper. Throughout this paper, we identify $U_q^+$ with $U$ via $\natural$. 

\medskip
\noindent We now discuss several types of words in $\mV$. 

\begin{definition}\label{def:balanced}\rm
(See \cite[Definition 1.3]{ter_catalan}.) A word $a_1a_2 \cdots a_n$ in $\mV$ is said to be {\it balanced} whenever
\begin{equation*}
|\{i \mid 1 \leq i \leq n, a_i = x\}|=|\{i \mid 1 \leq i \leq n, a_i = y\}|.
\end{equation*}
In this case $n$ is even. 
\end{definition}

\begin{example}\rm
We list the balanced words of length $\leq 4$. 
\begin{equation*}
\m1, \hspace{4em}xy, \hspace{1em}yx, 
\end{equation*}
\begin{equation*}
xxyy, \hspace{1em}xyxy, \hspace{1em}xyyx, \hspace{1em}yxxy, \hspace{1em}yxyx, \hspace{1em}yyxx. 
\end{equation*}
\end{example}

\begin{definition}\label{def:weight}\rm
(See \cite[Definition 1.3]{ter_catalan}.) For notational convenience, to each letter $a$ we assign a weight $\overline{a}$ as follows: 
\begin{equation*}
\overline{x}=1, \hspace{4em}\overline{y}=-1. 
\end{equation*}
\end{definition}

\begin{lemma}\label{lem:balanced}\rm
(See \cite[Definition 1.3]{ter_catalan}.) A word $a_1a_2 \cdots a_n$ is balanced if and only if $\overline{a}_1+\overline{a}_2+\cdots+\overline{a}_n=0$. 
\end{lemma}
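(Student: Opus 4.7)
The plan is to prove this by directly unpacking both definitions; the statement is essentially a bookkeeping reformulation of what it means for a word to be balanced. First, I would partition the positions $\{1,2,\ldots,n\}$ into the set $I_x=\{i \mid 1 \leq i \leq n,\ a_i=x\}$ and the set $I_y=\{i \mid 1 \leq i \leq n,\ a_i=y\}$, and write $k=|I_x|$ and $\ell=|I_y|$, so that $k+\ell=n$.

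Second, using Definition 2.3, which assigns $\overline{x}=1$ and $\overline{y}=-1$, I would compute
\begin{equation*}
\overline{a}_1+\overline{a}_2+\cdots+\overline{a}_n=\sum_{i \in I_x}1+\sum_{i \in I_y}(-1)=k-\ell.
\end{equation*}

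Third, I would appeal to Definition 2.1: the word $a_1a_2\cdots a_n$ is balanced precisely when $k=\ell$, which happens if and only if $k-\ell=0$, i.e.\ if and only if $\overline{a}_1+\overline{a}_2+\cdots+\overline{a}_n=0$. Combining the three steps proves both directions of the biconditional, and also recovers the observation that $n=k+\ell=2k$ is even in the balanced case.

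I do not anticipate any substantive obstacle; this lemma is a direct translation between a cardinality condition and a weighted sum, and requires no tools beyond elementary arithmetic. The only point calling for mild care is to avoid conflating the balanced condition (vanishing of the total weight sum) with the strictly stronger Catalan condition introduced in the introduction, which additionally requires nonnegativity of all the partial sums $\overline{a}_1+\cdots+\overline{a}_i$ for $1 \leq i \leq n-1$; this distinction will be important later in the paper, but does not enter the proof of the present lemma.
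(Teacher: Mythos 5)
Your proposal is correct and is exactly the argument the paper leaves implicit: the paper's proof is the one-line ``Follows from Definitions \ref{def:balanced} and \ref{def:weight},'' and your computation $\overline{a}_1+\cdots+\overline{a}_n=|I_x|-|I_y|$ is precisely the unpacking of that remark. No issues.
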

\begin{proof}
Follows from Definitions \ref{def:balanced} and \ref{def:weight}. 
\end{proof}

\noindent We will be discussing a certain type of balanced word, said to be Catalan. 

\begin{definition}\label{def:Cat}\rm
(See \cite[Definition 1.3]{ter_catalan}.) A word $a_1a_2 \cdots a_n$ is \textit{Catalan} whenever $\overline{a}_1+\overline{a}_2+\cdots+\overline{a}_i \geq 0$ for $1 \leq i \leq n-1$ and $\overline{a}_1+\overline{a}_2+\cdots+\overline{a}_n=0$. A Catalan word is balanced. The length of a Catalan word is even. 
\end{definition}

\begin{example}\label{ex:Catn}\rm
We list the Catalan words of length $\leq 6$. 
\begin{equation*}
\m1, \hspace{4em}xy, \hspace{4em}xyxy, \hspace{1em}xxyy, 
\end{equation*}
\begin{equation*}
xyxyxy, \hspace{1em}xxyyxy, \hspace{1em}xyxxyy, \hspace{1em}xxyxyy, \hspace{1em}xxxyyy. 
\end{equation*}
\end{example}

\begin{lemma}\label{lem:Catxy}\rm
Let $w=a_1a_2 \cdots a_{2n}$ be a nontrivial Catalan word. Then $a_1=x$ and $a_{2n}=y$. 
\end{lemma}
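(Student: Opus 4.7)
The plan is to read off both conclusions directly from the Catalan condition in Definition \ref{def:Cat} together with the weight values in Definition \ref{def:weight}.

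For the first letter, I would take $i=1$ in the inequality $\overline{a}_1 + \overline{a}_2 + \cdots + \overline{a}_i \geq 0$, which gives $\overline{a}_1 \geq 0$. Since every letter has weight $\pm 1$, this forces $\overline{a}_1 = 1$, and therefore $a_1 = x$.

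For the last letter, I would combine the equality $\overline{a}_1 + \overline{a}_2 + \cdots + \overline{a}_{2n} = 0$ with the inequality at $i = 2n-1$, namely $\overline{a}_1 + \overline{a}_2 + \cdots + \overline{a}_{2n-1} \geq 0$ (this requires $n \geq 1$, which holds since $w$ is nontrivial). Subtracting gives $\overline{a}_{2n} \leq 0$, so $\overline{a}_{2n} = -1$ and $a_{2n} = y$.

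There is no obstacle here; the argument is a one-line consequence of the definitions. The only point to keep an eye on is that the inequality at $i = 2n-1$ is only available because $n \geq 1$, which is guaranteed by the hypothesis that $w$ is nontrivial (so its length $2n \geq 2$).
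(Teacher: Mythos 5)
Your proof is correct and is exactly the argument the paper has in mind; the paper's own proof is just the one-liner ``Follows from Definition \ref{def:Cat},'' and your write-up spells out the two instances of the defining inequalities (at $i=1$ and $i=2n-1$) that make it work. The remark about needing $n\geq 1$ to have the $i=2n-1$ inequality available is a nice touch but does not change the substance.
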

\begin{proof}
Follows from Definition \ref{def:Cat}. 
\end{proof}

\begin{definition}\label{def:Cn}\rm
(See \cite[Definition 1.5]{ter_catalan}.) For $n \in \mN$, define 
\begin{equation*}
C_n=\sum a_1a_2 \cdots a_{2n}\prod_{i=1}^{2n}[1+\overline{a}_1+\overline{a}_2+\cdots+\overline{a}_i]_q, 
\end{equation*}
where the sum is over all Catalan words $a_1a_2 \cdots a_{2n}$ of length $2n$. We interpret $C_0=\m1$. We call $C_n$ the $n^{\rm th}$ Catalan element. 
\end{definition}

\begin{example}\label{ex:Cn}\rm
We list $C_n$ for $0 \leq n \leq 3$. 
\begin{equation*}
C_0=\m1, \hspace{4em} C_1=[2]_qxy, \hspace{4em} C_2=[2]_q^2xyxy+[2]_q^2[3]_qxxyy, 
\end{equation*}
\begin{equation*}
C_3=[2]_q^3xyxyxy+[2]_q^3[3]_qxxyyxy+[2]_q^3[3]_qxyxxyy+[2]_q^3[3]_q^2xxyxyy+[2]_q^2[3]_q^2[4]_qxxxyyy. 
\end{equation*}
\end{example}

\noindent We will be discussing the notion of a PBW basis. We refer to \cite[Definition 2.1]{ter_alternating} for the definition of this notion. 

\medskip
\noindent Next we recall three PBW basis for $U$. We will encounter the elements $xC_n$, $C_ny$, $xC_{n-1}y$. We emphasize that this notation refers to the free product on $\mV$. 

\begin{proposition}\label{prop:CnPBW}\rm
(See \cite[Corollary 6.8]{ter_beck}.) The elements $\{xC_n\}_{n \in \mN}$, $\{C_ny\}_{n \in \mN}$, $\{C_n\}_{n=1}^\infty$ form a PBW basis for $U$ under the linear ordering 
\begin{equation*}
xC_0<xC_1<xC_2<\cdots<C_1<C_2<C_3<\cdots<C_2y<C_1y<C_0y. 
\end{equation*}
\end{proposition}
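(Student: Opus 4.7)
The plan is to deduce the proposition from Damiani's PBW basis for $U_q^+$, transported to $U$ via the Rosso embedding $\natural$. First, I would invoke Damiani's theorem \cite{damiani}: the root vectors $\{E_{n\delta+\alpha_0}\}_{n=0}^\infty$, $\{E_{n\delta}\}_{n=1}^\infty$, $\{E_{n\delta+\alpha_1}\}_{n=0}^\infty$ form a PBW basis of $U_q^+$ under the convex linear order in which the $\alpha_0$-root vectors come first (by increasing $n$), then the imaginary root vectors (by increasing $n$), then the $\alpha_1$-root vectors (by decreasing $n$). This is exactly the order pattern displayed in the statement of the proposition.

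Next, I would transport the basis to $U$ via $\natural$. Since $\natural : U_q^+ \to U$ is an algebra isomorphism (injective by \cite[Theorem 15]{rosso2}, with image $U$ by definition of $U$), it carries any PBW basis of $U_q^+$ to a PBW basis of $U$ under the induced linear order. To identify the image, I would invoke Terwilliger's closed-form formulas \cite[Theorem 1.7]{ter_catalan},
\begin{align*}
\natural(E_{n\delta+\alpha_0}) &= q^{-2n}(q-q^{-1})^{2n}\, xC_n,\\
\natural(E_{n\delta+\alpha_1}) &= q^{-2n}(q-q^{-1})^{2n}\, C_ny,\\
\natural(E_{n\delta}) &= -q^{-2n}(q-q^{-1})^{2n-1}\, C_n,
\end{align*}
which express each image as a nonzero scalar multiple of the corresponding candidate element in $\mV$.

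The remaining ingredient is the simple observation that rescaling each element of a PBW basis by a nonzero scalar preserves the PBW property under the same linear order: each ordered monomial in the rescaled generators is a nonzero scalar multiple of the corresponding monomial in the original generators, so both spanning and linear independence are inherited. This produces the desired PBW basis $\{xC_n\}_{n\in\mN}$, $\{C_n\}_{n=1}^\infty$, $\{C_ny\}_{n\in\mN}$ for $U$ under the stated order. The hard part is purely bookkeeping: one must check that the convex order underlying Damiani's original construction (coming from a chosen reduced expression for the longest element in the relevant braid group) matches the specific order written in the proposition, in particular the choice of decreasing $n$ for the $C_ny$ family. Once that alignment is confirmed, the rest of the argument is formal.
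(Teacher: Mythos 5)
The paper does not prove this proposition at all: it is recalled as background and attributed to \cite[Corollary 6.8]{ter_beck}, which is itself obtained exactly along the lines you describe (Damiani's convex-ordered PBW basis, transported through the injective Rosso embedding $\natural$, with the images identified as nonzero scalar multiples of $xC_n$, $C_ny$, $C_n$ via \cite[Theorem 1.7]{ter_catalan}). Your argument is correct and matches the standard derivation in the cited source, including the harmless rescaling step and the order-matching check you flag; there is nothing in the paper's treatment to diverge from.
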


\noindent We remark that up to normalization, the above PBW basis is the one given by Damiani in \cite{damiani}. 

\begin{proposition}\label{prop:xCnyPBW}\rm
(See \cite[Corollary 8.3]{ter_beck}.) The elements $\{xC_n\}_{n \in \mN}$, $\{C_ny\}_{n \in \mN}$, $\{xC_ny\}_{n \in \mN}$ form a PBW basis for $U$ under the linear ordering 
\begin{equation*}
xC_0<xC_1<xC_2<\cdots<xC_0y<xC_1y<xC_2y<\cdots<C_2y<C_1y<C_0y. 
\end{equation*}
\end{proposition}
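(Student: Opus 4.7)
My plan is to derive Proposition \ref{prop:xCnyPBW} from Proposition \ref{prop:CnPBW} together with the exponential identity \eqref{eq:Cexp}. The outer families $\{xC_n\}_{n \in \mN}$ and $\{C_ny\}_{n \in \mN}$ appear verbatim in both statements, so the entire task is to exchange the middle block $\{C_n\}_{n=1}^\infty$ for $\{xC_{n-1}y\}_{n=1}^\infty$ via a triangular change of basis.

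I begin by expanding the right-hand side of \eqref{eq:Cexp} as a formal power series in $t$ and equating coefficients of $t^n$. This yields an identity of the form
\begin{equation*}
C_n = \frac{[2n]_q}{n}\, xC_{n-1}y + \sum_{\substack{\lambda \vdash n \\ \ell(\lambda) \geq 2}} c_\lambda \, (xC_{\lambda_1-1}y) \star (xC_{\lambda_2-1}y) \star \cdots \star (xC_{\lambda_{\ell(\lambda)}-1}y),
\end{equation*}
for explicit scalars $c_\lambda \in \mF$, indexed by partitions $\lambda = (\lambda_1 \geq \cdots \geq \lambda_{\ell(\lambda)})$ of $n$ with at least two parts. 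The leading coefficient $[2n]_q/n$ is nonzero because $q$ is not a root of unity. Ordering partitions of $n$ by refinement (finer being larger), this identity shows that the transition between ordered $\star$-products of $\{C_n\}$ and ordered $\star$-products of $\{xC_{n-1}y\}$ is unitriangular after a diagonal rescaling, hence invertible.

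Combining this with Proposition \ref{prop:CnPBW}, each PBW basis vector of $U$ factorizes as (an $xC$-monomial) $\star$ (a $C$-monomial) $\star$ (a $Cy$-monomial) in the stated ordering. Applying the triangular substitution to each middle factor replaces it by the corresponding $xCy$-monomial plus contributions indexed by strictly finer partitions; since the substitution is invertible, the resulting set again spans $U$ and is linearly independent, hence forms a basis. The linear ordering in Proposition \ref{prop:xCnyPBW} is compatible with this exchange: $xC$-monomials first, then $xCy$-monomials, then $Cy$-monomials.

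The principal obstacle is formalizing the triangularity above, which rests on two commutativity facts: that $C_i \star C_j = C_j \star C_i$ for all $i,j \geq 1$ (so that middle-block monomials are genuinely indexed by partitions rather than by sequences), and the analogous statement for $\{xC_{n-1}y\}$. The first is implicit in the Damiani description of the middle block in Proposition \ref{prop:CnPBW}; the second can then be read off from \eqref{eq:Cexp}, since the right-hand side is a commutative exponential in the $xC_{n-1}y$'s. With these in hand, the remainder is a routine manipulation of the exponential generating function.
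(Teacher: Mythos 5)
The paper does not actually prove Proposition \ref{prop:xCnyPBW}; it is recalled from the literature with the citation to \cite[Corollary 8.3]{ter_beck}, so there is no in-paper argument to compare against. Your reconstruction is sound and is essentially the derivation used in the cited source: the outer blocks coincide with those of Proposition \ref{prop:CnPBW}, and the exponential identity \eqref{eq:Cexp} gives $C_n=\frac{[2n]_q}{n}xC_{n-1}y$ plus $\star$-decomposable terms in lower-index $xC_ky$'s, whence the transition between ordered middle-block monomials is block-diagonal by degree and triangular with nonzero diagonal with respect to refinement of partitions; multiplying on the left and right by fixed outer monomials preserves this block-invertible structure, so the basis property transfers. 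Two small points of hygiene. First, the commutativity of $\{C_n\}_{n\geq 1}$ is \emph{not} implicit in the PBW property of Proposition \ref{prop:CnPBW} (a PBW basis does not require its generators to commute); in this paper it is available from Theorem \ref{thm:genfuns} or Proposition \ref{prop:genfuns2} --- and in fact you do not need it for the one-directional substitution, only the commutativity of $\{xC_ny\}_{n\in\mN}$, which is Theorem \ref{thm:genfuns}(i) rather than something ``read off'' from \eqref{eq:Cexp} (the exponential formula presupposes it). Second, since \eqref{eq:Cexp} is proved in this paper only in Section 11 (via Theorem \ref{thm:D=exp}), independently of any PBW statement, there is no circularity in invoking it here. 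With those attributions corrected, the argument is complete.
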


\noindent We remark that up to normalization, the above PBW basis is the one given by Beck in \cite{beck}. 

\begin{definition}\label{def:tGn}\rm
(See \cite[Definition 5.2]{ter_alternating}.) For $n \in \mN$, define
\begin{equation*}
W_{-n}=(xy)^nx, \hspace{2em}W_{n+1}=y(xy)^n, \hspace{2em}G_{n+1}=(yx)^{n+1}, \hspace{2em}\tG_{n+1}=(xy)^{n+1}. 
\end{equation*}
The above exponents are with respect to the free product. 
\end{definition}

\begin{example}\label{ex:tGn}\rm
We list $W_{-n}$, $W_{n+1}$, $G_{n+1}$, $\tG_{n+1}$ for $0 \leq n \leq 3$. 
\begin{align*}
&W_0=x, \hspace{2em} W_{-1}=xyx, \hspace{2em} W_{-2}=xyxyx, \hspace{2em} W_{-3}=xyxyxyx; \\
&W_1=y, \hspace{2em} W_2=yxy, \hspace{2em} W_3=yxyxy, \hspace{2em} W_4=yxyxyxy; \\
&G_1=yx, \hspace{2em} G_2=yxyx, \hspace{2em} G_3=yxyxyx, \hspace{2em} G_4=yxyxyxyx; \\
&\tG_1=xy, \hspace{2em} \tG_2=xyxy, \hspace{2em} \tG_3=xyxyxy, \hspace{2em} \tG_4=xyxyxyxy. 
\end{align*}
\end{example}

\noindent The words $\{W_{-n}\}_{n \in \mN}$, $\{W_{n+1}\}_{n \in \mN}$, $\{G_{n+1}\}_{n \in \mN}$, $\{\tG_{n+1}\}_{n \in \mN}$ are called \textit{alternating}. For notational convenience, we define $G_0=\m1$ and $\tG_0=\m1$. 

\begin{proposition}\label{prop:tGnPBW}\rm
(See \cite[Theorems 10.1 and 10.2]{ter_alternating}.) Each of the following (i), (ii) forms a PBW basis for $U$ under an appropriate linear ordering. 
\begin{enumerate}
\item $\{W_{-n}\}_{n \in \mN}$, $\{W_{n+1}\}_{n \in \mN}$, $\{\tG_{n+1}\}_{n \in \mN}$; 
\item $\{W_{-n}\}_{n \in \mN}$, $\{W_{n+1}\}_{n \in \mN}$, $\{G_{n+1}\}_{n \in \mN}$. 
\end{enumerate}
\end{proposition}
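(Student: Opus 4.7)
The plan is to deduce Proposition~\ref{prop:tGnPBW} from Proposition~\ref{prop:xCnyPBW} (the Beck PBW basis) by exhibiting a triangular change of variables between the two generating sets. I would work inside the subalgebra $U$, taking advantage of its $\mZ^2$-grading by $(\deg_x,\deg_y)$ inherited from the grading of $\mV$ by the number of $x$'s and $y$'s. Within each (finite-dimensional) graded component, it suffices to show that the transition matrix between the Beck PBW monomials and the alternating PBW monomials is invertible.

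First I would verify that $W_{-n}, W_{n+1}, G_{n+1}, \tG_{n+1}$ all belong to $U$. Since $x, y$ generate $U$ under the $\star$-product, this reduces to inductively producing each alternating word as an explicit iterated $\star$-expression in $x, y$. Second, for part (i), I would apply the exponential formula \eqref{eq:tGexp} and extract the coefficient of $t^{n+1}$:
\begin{equation*}
\tG_{n+1} = \frac{(-1)^n[n+1]_q}{n+1}\, xC_n y + \Sigma_n,
\end{equation*}
where $\Sigma_n$ is a $\star$-polynomial in $\{xC_k y\}_{k<n}$ alone (as the $m=1$ term of the expanded exponential gives the isolated $xC_n y$ contribution and the $m \geq 2$ terms give products of strictly shorter $xC_{k-1}y$'s). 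An analogous but more direct computation, by recursion on $n$, should yield $W_{-n} = \lambda_n\, xC_n + (\star\text{-polynomial in }\{xC_k\}_{k<n})$ and $W_{n+1} = \mu_n\, C_n y + (\star\text{-polynomial in }\{C_k y\}_{k<n})$ with nonzero scalars $\lambda_n, \mu_n$. Together these give a triangular change of generators from Proposition~\ref{prop:xCnyPBW} to the alternating set in (i), and invertibility in each bidegree transfers the PBW property. Part (ii) then follows from part (i) by a further triangular relation between $G_{n+1}$ and $\tG_{n+1}$, arising from the palindromic relation at the level of free concatenation and translated into the $\star$-algebra of $U$.

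The main obstacle will be rigorously certifying the triangularity of $\Sigma_n$ against the PBW ordering. Since $\Sigma_n$ and $xC_n y$ share the bidegree $(n+1, n+1)$, rewriting $\Sigma_n$ as a linear combination of Beck PBW monomials could in principle produce further contributions to the coefficient of $xC_n y$ through the commutation relations of the PBW basis. Ruling this out requires a secondary filtration---for instance, filtering by the number of PBW generators appearing in a monomial---together with the observation, visible from \eqref{eq:tGexp}, that every summand of $\Sigma_n$ is a $\star$-product of strictly more than one lower generator. The same subtlety will reappear in the companion identities for $W_{-n}$ and $W_{n+1}$, where no exponential formula is available in the introduction and the needed triangular relations must be obtained by a careful recursive manipulation of the $q$-shuffle product.
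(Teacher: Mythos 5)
The paper does not prove this proposition; it is imported verbatim from \cite[Theorems 10.1 and 10.2]{ter_alternating}, so you are supplying an argument where the paper supplies only a citation. Your overall strategy (triangular change of generators from Proposition \ref{prop:xCnyPBW}) is the right idea and is essentially how the literature proceeds, and the part concerning the middle family is sound: by \eqref{eq:tGexp}, $\tG_{n+1}$ equals $\frac{(-1)^n[n+1]_q}{n+1}xC_ny$ plus a polynomial in $\{xC_ky\}_{k<n}$, and the ``main obstacle'' you worry about is actually vacuous here, because the elements $xC_ky$ mutually commute (Theorem \ref{thm:genfuns}(i)), so the subalgebra they generate is an honest polynomial ring and no commutator corrections can feed back into the coefficient of $xC_ny$.

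The genuine gap is in the companion identities you assert for the real-root families. The claimed form $W_{-n}=\lambda_n\,xC_n+(\star\text{-polynomial in }\{xC_k\}_{k<n})$ is impossible on bidegree grounds: $xC_n$ has bidegree $(n+1,n)$, a single $xC_k$ with $k<n$ has the wrong total degree, and any $\star$-product of two or more elements of $\{xC_k\}_{k<n}$ has $x$-degree exceeding $y$-degree by at least $2$. The correction terms must involve the middle family. Already for $n=1$ one computes $x\star xy=(1+q^2)xxy+xyx$, whence
\begin{equation*}
W_{-1}=xyx=xC_0\star xC_0y-q\,xC_1,
\end{equation*}
so the leading coefficient is $-q\neq 0$ but the lower-order term is $xC_0\star xC_0y$, not a polynomial in $\{xC_k\}_{k<1}=\{x\}$. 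The corrected triangular statement ($W_{-n}\equiv\lambda_n xC_n$ modulo products of lower $xC_k$'s with middle-family generators, and dually for $W_{n+1}$) is true but is precisely the nontrivial content of \cite[Section 9]{ter_alternating}; it requires its own inductive proof via the $q$-shuffle recursions, which your sketch does not supply. Finally, for part (ii) the ``palindromic relation'' is vaguer than necessary: the map swapping $x\leftrightarrow y$ is an automorphism of the $q$-shuffle algebra preserving $U$, and it carries $W_{-n}\mapsto W_{n+1}$, $W_{n+1}\mapsto W_{-n}$, $\tG_{n+1}\mapsto G_{n+1}$, so (ii) follows from (i) by applying this automorphism and reversing the linear order, with no further triangularity argument needed.
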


\noindent In \cite[Section 9]{ter_alternating} it is explained how the two PBW bases in Proposition \ref{prop:tGnPBW} are related. This relationship is described using some generating functions. We have some comments on these generating functions. 

\medskip
\noindent For the rest of this paper, let $t$ denote an indeterminate. We will discuss generating functions in the variable $t$.

\begin{definition}\label{intro:tG(t)}\rm
Define the generating function 
\begin{equation*}
\tG(t)=\sum_{n \in \mN}\tG_nt^n. 
\end{equation*}
\end{definition}

\noindent by \cite[Lemma 1.4]{inverse}, the function $\tG(t)$ is invertible with respect to the $q$-shuffle product. 

\begin{definition}\label{intro:D(t)}\rm
Let $D(t)$ denote the multiplicative inverse of $\tG(t)$ with respect to the $q$-shuffle product. In other words, 
\begin{equation}\label{eq:tG*D}
\tG(t) \star D(t)=\m1=D(t) \star \tG(t). 
\end{equation}
Write 
\begin{equation*}
D(t)=\sum_{n \in \mN}D_nt^n. 
\end{equation*}
Note that $D_0=\m1$. 
\end{definition}

\noindent The elements $\{D_n\}_{n \in \mN}$ can be computed recursively using \eqref{eq:tG*D}. A closed form for $\{D_n\}_{n \in \mN}$ was given in \cite{inverse}. In a moment we will describe this closed form. Observe that 
\begin{equation}\label{eq:xy01}
(\overline{x}+1)/2=1, \hspace{4em} (\overline{y}+1)/2=0. 
\end{equation}

\begin{proposition}\label{prop:Dn}\rm
(See \cite[Definition 1.9 and Theorem 1.11]{inverse}.) For $n \in \mN$, 
\begin{equation*}
D_n=(-1)^n\sum a_1a_2 \cdots a_{2n}\prod_{i=1}^{2n}[\overline{a}_1+\overline{a}_2+\cdots+\overline{a}_{i-1}+(\overline{a}_i+1)/2]_q, 
\end{equation*}
where the sum is over all Catalan words $a_1a_2 \cdots a_{2n}$ of length $2n$. 
\end{proposition}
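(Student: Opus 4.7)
The plan is to define $D'_n$ as the right-hand side of the proposed formula, form the generating function $D'(t)=\sum_{n \in \mN} D'_n t^n$, and verify directly that $\tG(t) \star D'(t) = \m1$. By uniqueness of the multiplicative inverse (Definition \ref{intro:D(t)}), this will force $D'(t)=D(t)$ and yield the claim. The base case $n=0$ is immediate: the only Catalan word of length $0$ is $\m1$, with empty product equal to $1$, so $D'_0=\m1=D_0$.

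For $n \geq 1$, extracting the coefficient of $t^n$ in $\tG(t) \star D'(t)=\m1$ reduces the task to proving
$$\sum_{k=0}^n \tG_k \star D'_{n-k} = 0, \qquad\text{equivalently}\qquad D'_n=-\sum_{k=1}^n (xy)^k \star D'_{n-k}.$$
I would prove this identity by strong induction on $n$, comparing the coefficient of each balanced word $w=b_1b_2\cdots b_{2n}$ on both sides. On the right-hand side the coefficient is the claimed expression $(-1)^n\prod_{i=1}^{2n}[\overline{b}_1+\cdots+\overline{b}_{i-1}+(\overline{b}_i+1)/2]_q$ when $w$ is Catalan, and $0$ otherwise; on the left-hand side, each $\tG_k\star D'_{n-k}$ contributes a weighted sum over all $\star$-interleavings of $(xy)^k$ with Catalan words of length $2(n-k)$ that assemble to $w$, where the weights are the $q$-powers produced by the recursive $\star$-rule.

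The main obstacle is the combinatorial identity at the coefficient level: for every balanced $w$ of length $2n$ one must show that the left-hand weighted sum collapses to $(-1)^n$ times the height-product above when $w$ is Catalan, and to $0$ otherwise. My plan is to exploit the first-return decomposition $w=xuyv$ of a Dyck path (and an analogous first-descent decomposition when $w$ is merely balanced), together with the peeling $\tG_k = xy \cdot \tG_{k-1}$, inserted into the recursive shuffle rule
$$u \star v=u_1((u_2 \cdots u_r) \star v)+v_1(u \star (v_2 \cdots v_s))q^{\langle v_1,u_1 \rangle+\cdots+\langle v_1,u_r \rangle}.$$
Peeling the leading $x$ and trailing $y$ of $\tG_k$ (or rather, the leading $xy$) sets up a joint recursion in $k$ and $n$ that, combined with the inductive hypothesis on smaller values of $n$, should reduce the identity to a telescoping of $q$-integer factors keyed to the heights of $w$; the quotient $(\overline{a}_i+1)/2$ in the exponents arises precisely because an $x$-step at position $i$ contributes height $h_i$ while a $y$-step contributes height $h_{i-1}$, and both equal $\max(h_{i-1},h_i)$. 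As a sanity check at $n=1$, the identity reads $D'_1=-xy$, and the closed form on the unique Catalan word $xy$ evaluates to $(-1)^1 [1]_q [1]_q = -1$, consistent with $\tG_0 \star D'_1 + \tG_1 \star D'_0=0$.
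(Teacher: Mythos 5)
Your overall reduction is sound: since $D(t)$ is defined as the $\star$-inverse of $\tG(t)$, proving the closed form is exactly equivalent to proving $\sum_{k=0}^{n}\tG_k \star D'_{n-k}=0$ for $n\geq 1$, and your $n=0,1$ checks are correct. But that reformulation is essentially a restatement of Definition \ref{intro:D(t)}; the entire content of the proposition is the convolution identity itself, and your proposal does not prove it. The step where you assert that peeling the leading $xy$ from $\tG_k$ and using a first-return decomposition of $w$ ``should reduce the identity to a telescoping of $q$-integer factors'' is a hope, not an argument. Concretely, when $(xy)^k$ is $q$-shuffled into a Catalan word of length $2(n-k)$, its $2k$ letters land in arbitrary positions of the resulting word $w$, not only inside first-return blocks, and the accumulated $q$-exponents depend on the entire interleaving pattern. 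To gauge what is being waved at: the paper's computation of the coefficient of a single Catalan word $w$ in $xy\star \nabla^{(m)}_n-\nabla^{(m)}_n\star xy$ --- i.e.\ just the $k=1$ analogue of your sum --- already occupies Lemmas \ref{lem:coeff[]v1} and \ref{lem:coeff[]v2}, and needs the four $q$-integer identities of Lemma \ref{lem:[n]_q} plus an eight-case analysis over profiles. Your plan would require comparable work simultaneously for every $k$, and you have not exhibited the telescoping that is supposed to make it collapse.

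For comparison, the paper never verifies $\tG(t)\star D(t)=\m1$ by direct coefficient extraction against the $\tG_k$. It defines $\Delta^{(m)}_n$ by the closed-form coefficients for every integer $m$, establishes the recursion $\Delta^{(m)}_{n+1}=\frac{\left(q^m x\star\Delta^{(m)}_n-q^{-m}\Delta^{(m)}_n\star x\right)y}{q-q^{-1}}$ (Proposition \ref{prop:D^m_nrecursion}, via the profile analysis of Lemma \ref{lem:N^m(v)decomp}), proves the exponential formula $\Delta^{(m)}(t)=\exp\left(\sum_{n\geq 1}\frac{[mn]_q}{n}\nabla^{(0)}_nt^n\right)$ (Theorem \ref{thm:D=exp}), and deduces $\Delta^{(-1)}(t)\star\Delta^{(1)}(t)=\m1$ formally (Corollary \ref{cor:inverse}); combined with the easy identification $\Delta^{(-1)}(t)=\tG(-t)$ from Lemma \ref{lem:D^-1_n}, this gives $\Delta^{(1)}(t)=D(-t)$, which is the proposition (Corollary \ref{cor:D^1_n}). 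The payoff of that detour is that the hard combinatorics is done once, for the commutator with the single element $xy$, and the words $\tG_k$ for $k\geq 2$ are never shuffled by hand. If you want to keep your direct route, you must either carry out the interleaving combinatorics in full or interpose a recursion of the paper's type that avoids it.
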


\begin{example}\label{ex:Dn}\rm
We list $D_n$ for $0 \leq n \leq 3$. 
\begin{equation*}
D_0=\m1, \hspace{4em} D_1=-xy, \hspace{4em} D_2=xyxy+[2]_q^2xxyy, 
\end{equation*}
\begin{equation*}
D_3=-xyxyxy-[2]_q^2xxyyxy-[2]_q^2xyxxyy-[2]_q^4xxyxyy-[2]_q^2[3]_q^2xxxyyy. 
\end{equation*}
\end{example}

\noindent To motivate our main results, we recall some relations involving the elements $\{C_n\}_{n \in \mN}$, $\{D_n\}_{n \in \mN}$, $\{\tG_n\}_{n \in \mN}$, $\{xC_ny\}_{n \in \mN}$. In order to do this, we bring in another generating function. 

\begin{definition}\label{intro:C(t)}\rm
Define the generating function 
\begin{equation*}
C(t)=\sum_{n \in \mN}C_nt^n. 
\end{equation*}
\end{definition}

\noindent We will be discussing the exponential function 
\begin{equation*}
\exp(z)=\sum_{n \in \mN}\frac{z^n}{n!}. 
\end{equation*}

\begin{theorem}\label{thm:genfuns}\rm
(See \cite[Corollaries 8.1, 8.4 and Proposition 9.11]{ter_beck}.) The following (i)--(iv) hold. 
\begin{enumerate}
\item The elements $\{xC_ny\}_{n \in \mN}$ mutually commute with respect to the $q$-shuffle product. 
\item $\displaystyle C(t)=\exp\left(\sum_{n=1}^\infty \frac{[2n]_q}{n}xC_{n-1}yt^n\right)$. 
\item $\displaystyle \tG(t)=\exp\left(-\sum_{n=1}^\infty \frac{(-1)^n[n]_q}{n}xC_{n-1}yt^n\right)$. 
\item $\displaystyle D(t)=\exp\left(\sum_{n=1}^\infty \frac{(-1)^n[n]_q}{n}xC_{n-1}yt^n\right)$. 
\end{enumerate}
\end{theorem}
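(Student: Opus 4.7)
My plan is to derive Theorem~\ref{thm:genfuns} as a corollary of the two main results of the paper concerning the generating function
\begin{equation*}
\mathcal{F}_m(t)=\exp\!\left(\sum_{n=1}^\infty \frac{[mn]_q}{n}\,xC_{n-1}y\,t^n\right),
\end{equation*}
interpreted via the formal series $\exp(z)=\sum_{k\geq 0}z^{\star k}/k!$, which is well-defined as an element of $\mV[[t]]$ regardless of commutativity. The key input is the closed form given by the second main result: the coefficient of a Catalan word $a_1a_2\cdots a_{2n}$ in $[t^n]\mathcal{F}_m(t)$ equals $\prod_{i=1}^{2n}[\overline{a}_1+\cdots+\overline{a}_{i-1}+m(\overline{a}_i+1)/2]_q$. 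Parts (ii), (iv), and (iii) then follow by specializing $m$ to $2$, $1$, and $-1$ respectively.

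For (ii), since $\overline{a}_i+1\in\{0,2\}$ one has $m(\overline{a}_i+1)/2=\overline{a}_i+1$ at $m=2$, so each factor simplifies to $[\overline{a}_1+\cdots+\overline{a}_i+1]_q$, exactly the coefficient in $C_n$ from Definition~\ref{def:Cn}; hence $\mathcal{F}_2(t)=C(t)$. For (iv), at $m=1$ each factor is $[\overline{a}_1+\cdots+\overline{a}_{i-1}+(\overline{a}_i+1)/2]_q$, agreeing with Proposition~\ref{prop:Dn} up to the global sign $(-1)^n$; thus $\mathcal{F}_1(t)=\sum_{n\in\mN}(-1)^n D_n t^n=D(-t)$, and substituting $t\mapsto -t$ yields (iv). For (iii), at $m=-1$ each factor equals $[\overline{a}_1+\cdots+\overline{a}_{i-1}-1]_q$ when $a_i=x$ and $[\overline{a}_1+\cdots+\overline{a}_{i-1}]_q$ when $a_i=y$. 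Viewing the Catalan word as a Dyck path, any word other than $(xy)^n$ reaches height at least $2$, so at some index $i$ with $a_i=x$ the partial sum $\overline{a}_1+\cdots+\overline{a}_{i-1}$ equals $1$, contributing a factor $[0]_q=0$. Only $(xy)^n=\tG_n$ survives, with product $([-1]_q[1]_q)^n=(-1)^n$, so $\mathcal{F}_{-1}(t)=\tG(-t)$, and substituting $t\mapsto -t$ yields (iii).

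For (i), mutual commutativity of $\{xC_{n-1}y\}_{n\geq 1}$ will follow from the factorization supplied by Main Result 1, which (as noted in the introduction) expresses $\mathcal{F}_m(t)$ as a $\star$-product of rescaled copies of $D(t)$ for $m>0$ or $\tG(t)$ for $m<0$. Granted that the factors in this product $\star$-commute pairwise, which is part of the content of Main Result 1, the identity $\mathcal{F}_{m+m'}(t)=\mathcal{F}_m(t)\star\mathcal{F}_{m'}(t)$ (valid whenever the exponents $\star$-commute) can be matched coefficient-by-coefficient against the closed form of Main Result 2. Comparing the coefficients of $t^{k+\ell}$ for arbitrary $k,\ell\geq 1$ then pins down $xC_{k-1}y\star xC_{\ell-1}y-xC_{\ell-1}y\star xC_{k-1}y$ and forces it to vanish.

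The main obstacle is (i). While Main Result 2 uniquely determines $\mathcal{F}_m(t)$ in the word basis of $\mV$, matching this against the formal $\exp$ expansion $\sum_k z^{\star k}/k!$ yields only symmetrized identities among $\star$-products of the $xC_{n-1}y$; genuine commutativity requires the structural input from Main Result 1, and carrying out the resulting coefficient comparison is the most delicate step of the argument.
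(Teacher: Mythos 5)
Your proposal runs the paper's logic backwards, and this creates both a circularity and a real gap. In the paper, Theorem \ref{thm:genfuns} is \emph{not} a corollary of the two main results: the main results are derived \emph{from} it. The actual engine is Theorem \ref{thm:D=exp}, the single formula $\Delta^{(m)}(t)=\exp\bigl(\sum_{n\ge 1}\tfrac{[mn]_q}{n}\nabla^{(0)}_n t^n\bigr)$ for all $m\in\mZ$, proved via the $\Delta^{(m)}_n$, $\nabla^{(m)}_n$ machinery of Sections 4--11; Theorem \ref{thm:genfuns}(ii)--(iv) are its specializations at $m=2,1,-1$ (using $\Delta^{(2)}_n=C_n$, $\Delta^{(1)}_n=(-1)^nD_n$, $\Delta^{(-1)}_n=(-1)^n\tG_n$, $\nabla^{(0)}_n=xC_{n-1}y$), and then Theorem \ref{thm:genfuns4} is proved \emph{using} Theorem \ref{thm:genfuns}, and Theorem \ref{thm:closedform} using Theorem \ref{thm:genfuns4}. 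So invoking Theorems \ref{thm:genfuns4} and \ref{thm:closedform} as the ``key inputs'' without an independent proof of them is circular. Your coefficient specializations at $m=2,1,-1$ are correct computations (they correspond to Lemmas \ref{lem:D^2_n}, \ref{lem:D^-1_n} and Corollary \ref{cor:D^1_n}), but they rest on an unproved foundation.

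The gap is most serious for part (i), and you have in fact flagged it yourself. Since $\exp$ of a power series produces only symmetrized $\star$-products of its coefficients, knowing the expansion of $\exp(\cdots)$ in the word basis cannot by itself detect a commutator such as $xC_{k-1}y\star xC_{\ell-1}y - xC_{\ell-1}y\star xC_{k-1}y$. Your fallback is that the factors $D(-q^{m-1}t),\ldots,D(-q^{1-m}t)$ in Main Result 1 ``$\star$-commute pairwise, which is part of the content of Main Result 1''---but it is not: Theorem \ref{thm:genfuns4} asserts a single equality of ordered products, not commutativity of the factors, and the ``delicate coefficient comparison'' you mention is never carried out. More fundamentally, commutativity is logically \emph{prior} to every exponential identity here: the paper proves that the elements $xC_{n-1}y=\nabla^{(0)}_n$ ($n\ge 1$) mutually commute directly in Section 9 (Lemmas \ref{lem:N^0_nrecursion}--\ref{lem:N^0_ncommute}), by establishing $\nabla^{(0)}_{n+k}y^{-1}=\tfrac{1}{q-q^{-1}}\bigl(\nabla^{(0)}_ny^{-1}\star\nabla^{(0)}_k-\nabla^{(0)}_k\star\nabla^{(0)}_ny^{-1}\bigr)$ by induction on $n$ and then exploiting the symmetry of this identity in $n,k$ together with the $y^{-1}$ product rule of Lemma \ref{lem:y^-1}; that commutativity is then \emph{used} in the chain-rule step of the proof of Theorem \ref{thm:D=exp}. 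To repair your argument you would need to supply an independent proof of (i) first, and an independent proof of the closed form for the exponential; at that point you would essentially have reconstructed the paper's argument.
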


\noindent In the above equations, the exponential power series is computed with respect to the $q$-shuffle product. 

\medskip
\noindent In the next result we give a variation on the exponential formulas from Theorem \ref{thm:genfuns}. 

\begin{proposition}\label{prop:expderivative}\rm
For $n \geq 1$, 
\begin{enumerate}
\item $\displaystyle C_n=\frac{1}{n}\sum_{k=1}^n [2k]_qxC_{k-1}y \star C_{n-k}$; 
\item $\displaystyle \tG_n=-\frac{1}{n}\sum_{k=1}^n (-1)^k[k]_qxC_{k-1}y \star \tG_{n-k}$; 
\item $\displaystyle D_n=\frac{1}{n}\sum_{k=1}^n (-1)^k[k]_qxC_{k-1}y \star D_{n-k}$. 
\end{enumerate}
\end{proposition}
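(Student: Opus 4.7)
The plan is to obtain all three identities simultaneously by formally differentiating the three exponential formulas in Theorem \ref{thm:genfuns}(ii)--(iv) with respect to $t$ and reading off the coefficient of $t^{n-1}$.

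The preliminary step is to record the following general fact about the $q$-shuffle algebra of formal power series: if $G(t)=\sum_{n\geq 1} g_n t^n$ has coefficients $g_n \in \mV$ that mutually commute under $\star$, then
\begin{equation*}
\frac{d}{dt}\exp\bigl(G(t)\bigr) = G'(t) \star \exp\bigl(G(t)\bigr),
\end{equation*}
where the exponential is computed with respect to $\star$. This is the usual derivation: expand $\exp(G(t)) = \sum_{k\geq 0} G(t)^{\star k}/k!$, differentiate termwise via the Leibniz rule, and use the commutativity of $G(t)$ with $G(t)^{\star (k-1)}$ to combine the $k$ identical summands.

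The second step applies this fact to each of Theorem \ref{thm:genfuns}(ii)--(iv). By Theorem \ref{thm:genfuns}(i), the elements $\{xC_{m-1}y\}_{m\geq 1}$ mutually commute under $\star$, so the series appearing inside each exponential satisfies the hypothesis above. For part (i) of the proposition, I would differentiate Theorem \ref{thm:genfuns}(ii) to obtain
\begin{equation*}
\sum_{n\geq 1} n C_n t^{n-1} \;=\; \Bigl(\sum_{k\geq 1}[2k]_q\, xC_{k-1}y\, t^{k-1}\Bigr) \star C(t),
\end{equation*}
then expand the right-hand side as a Cauchy product in $t$ and compare the coefficient of $t^{n-1}$ on both sides. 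Parts (ii) and (iii) of the proposition follow from the identical procedure applied to Theorem \ref{thm:genfuns}(iii) and Theorem \ref{thm:genfuns}(iv), respectively, with $-(-1)^k[k]_q$ and $(-1)^k[k]_q$ replacing $[2k]_q$.

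There is no substantive obstacle; the whole argument is a formal consequence of Theorem \ref{thm:genfuns}. The only subtlety requiring a brief remark is that the preliminary derivative formula implicitly needs $\exp(G(t))$ to commute with $G'(t)$; this is automatic because $\exp(G(t))$ is a power series with coefficients in the commutative subalgebra generated by the $g_n$, and hence commutes under $\star$ with every coefficient of $G'(t)$. Thus each of $C(t)$, $\tG(t)$, $D(t)$ lies in the commutative subalgebra generated by $\{xC_{m-1}y\}_{m\geq 1}$, and the coefficient comparison is legitimate.
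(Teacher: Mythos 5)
Your proposal is correct and follows essentially the same route as the paper: differentiate the exponential formulas of Theorem \ref{thm:genfuns}(ii)--(iv) with respect to $t$, using the commutativity from Theorem \ref{thm:genfuns}(i) to justify $\frac{d}{dt}\exp(G(t))=G'(t)\star\exp(G(t))$, and then compare coefficients of $t^{n-1}$. The only difference is that you spell out the justification of the derivative formula, which the paper leaves implicit.
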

\begin{proof}
(i) Taking the derivative with respect to $t$ on both sides of Theorem \ref{thm:genfuns}(ii) gives 
\begin{equation*}
\sum_{n=1}^\infty nC_nt^{n-1}=\left(\sum_{n=1}^\infty [2n]_qxC_{n-1}yt^{n-1}\right) \star C(t). 
\end{equation*}
Comparing the coefficients of $t^{n-1}$ in the above equation gives the desired result. 

\medskip
\noindent (ii), (iii) Similar to the proof of (i). 
\end{proof}

\noindent The formulas in Proposition \ref{prop:expderivative} can be used to recursively compute $\{C_n\}_{n=1}^\infty$, $\{\tG_n\}_{n=1}^\infty$, $\{D_n\}_{n=1}^\infty$ in terms of $\{xC_ny\}_{n \in \mN}$. 

\medskip
\noindent The following propositions are immediate consequences of Theorem \ref{thm:genfuns}. 

\begin{proposition}\label{prop:genfuns2}\rm
(See \cite[Corollary 1.8]{ter_catalan}, \cite[Proposition 5.10 and Lemma 9.10]{ter_alternating}, \cite[Corollary 8.4]{ter_beck}.) The elements in the set 
\begin{equation*}
\{C_n\}_{n \in \mN} \cup \{\tG_n\}_{n \in \mN} \cup \{D_n\}_{n \in \mN} \cup \{xC_ny\}_{n \in \mN} 
\end{equation*}
mutually commute with respect to the $q$-shuffle product. 
\end{proposition}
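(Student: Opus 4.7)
The plan is to reduce the entire statement to the commutativity assertion in Theorem \ref{thm:genfuns}(i) by exploiting the three exponential formulas in parts (ii)--(iv). Let $\cA$ denote the subalgebra of the $q$-shuffle algebra $\mV$ generated by the family $\{xC_ny\}_{n \in \mN}$. By Theorem \ref{thm:genfuns}(i) the generators of $\cA$ mutually commute under $\star$, so $\cA$ is a commutative subalgebra of $(\mV,\star)$. My target is to prove that $C_n \in \cA$, $\tG_n \in \cA$, and $D_n \in \cA$ for every $n \in \mN$; once this is done, every element listed in the proposition lies in the commutative algebra $\cA$, and the desired mutual $\star$-commutativity is immediate.

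To achieve the containment $C_n, \tG_n, D_n \in \cA$, I would pass to the formal power series ring $\cA[[t]]$, which inherits commutativity from $\cA$. Each of the three series
\begin{equation*}
\sum_{n=1}^\infty \tfrac{[2n]_q}{n}\,xC_{n-1}y\,t^n, \qquad -\sum_{n=1}^\infty \tfrac{(-1)^n[n]_q}{n}\,xC_{n-1}y\,t^n, \qquad \sum_{n=1}^\infty \tfrac{(-1)^n[n]_q}{n}\,xC_{n-1}y\,t^n
\end{equation*}
has zero constant term and coefficients in $\cA$, so its $\star$-exponential (defined by the usual power series $\exp(z) = \sum_{k \geq 0} z^{\star k}/k!$) is a well-defined element of $\cA[[t]]$. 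By Theorem \ref{thm:genfuns}(ii)--(iv), these three $\star$-exponentials equal $C(t)$, $\tG(t)$, and $D(t)$ respectively. Extracting the coefficient of $t^n$ on each side then gives $C_n, \tG_n, D_n \in \cA$, as required.

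No substantive obstacle arises; the only technical point worth explicitly noting is that the $\star$-exponential of a series whose coefficients lie in the commutative subalgebra $\cA$ is itself a series in $\cA[[t]]$, a fact that depends on $\cA$ being closed under $\star$-multiplication together with the commutativity supplied by Theorem \ref{thm:genfuns}(i). Because the argument uses only the output of Theorem \ref{thm:genfuns}, it produces the proposition as an \emph{immediate} corollary, matching the characterization given before the statement.
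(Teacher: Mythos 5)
Your argument is correct and is exactly the route the paper intends: the paper's proof is the single line ``Follows from Theorem \ref{thm:genfuns},'' and your proposal simply makes explicit the implicit reasoning that $C_n$, $\tG_n$, $D_n$ are $\star$-polynomials in the mutually commuting family $\{xC_ny\}_{n \in \mN}$ via the exponential formulas, hence all lie in one commutative subalgebra.
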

\begin{proof}
Follows from Theorem \ref{thm:genfuns}. 
\end{proof}

\begin{proposition}\label{prop:genfuns3}\rm
(See \cite[Proposition 11.8]{ter_alternating}.) We have 
\begin{equation*}
C(-t)=D(qt) \star D(q^{-1}t). 
\end{equation*}
\end{proposition}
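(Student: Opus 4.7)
The plan is to verify the identity by expanding both sides as exponentials and comparing. The key ingredient is that all the elements $\{xC_{n-1}y\}_{n=1}^\infty$ mutually commute with respect to the $q$-shuffle product (Theorem \ref{thm:genfuns}(i)), which means we may freely add exponents when multiplying exponentials whose arguments are built from these elements.

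First, I would apply Theorem \ref{thm:genfuns}(iv) to obtain
\begin{equation*}
D(qt) = \exp\!\left(\sum_{n=1}^\infty \frac{(-1)^n [n]_q q^n}{n}\, xC_{n-1}y\, t^n\right), \qquad D(q^{-1}t) = \exp\!\left(\sum_{n=1}^\infty \frac{(-1)^n [n]_q q^{-n}}{n}\, xC_{n-1}y\, t^n\right).
\end{equation*}
Since the arguments of these two exponentials are $\mF[[t]]$-linear combinations of mutually commuting elements $xC_{n-1}y$, the standard identity $\exp(u)\star\exp(v) = \exp(u+v)$ applies (when $u\star v = v\star u$), giving
\begin{equation*}
D(qt) \star D(q^{-1}t) = \exp\!\left(\sum_{n=1}^\infty \frac{(-1)^n [n]_q (q^n + q^{-n})}{n}\, xC_{n-1}y\, t^n\right).
\end{equation*}

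Next I would simplify the coefficient using the $q$-integer identity
\begin{equation*}
[n]_q (q^n + q^{-n}) = \frac{q^n - q^{-n}}{q - q^{-1}}(q^n + q^{-n}) = \frac{q^{2n} - q^{-2n}}{q - q^{-1}} = [2n]_q,
\end{equation*}
which follows directly from \eqref{eq:qint}. Therefore
\begin{equation*}
D(qt) \star D(q^{-1}t) = \exp\!\left(\sum_{n=1}^\infty \frac{(-1)^n [2n]_q}{n}\, xC_{n-1}y\, t^n\right).
\end{equation*}

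Finally, Theorem \ref{thm:genfuns}(ii) applied with $t$ replaced by $-t$ yields
\begin{equation*}
C(-t) = \exp\!\left(\sum_{n=1}^\infty \frac{[2n]_q}{n}\, xC_{n-1}y\, (-t)^n\right) = \exp\!\left(\sum_{n=1}^\infty \frac{(-1)^n [2n]_q}{n}\, xC_{n-1}y\, t^n\right),
\end{equation*}
matching the expression above. This completes the proof. The only delicate point is the legitimacy of combining the two exponentials into one by summing arguments; this is where Theorem \ref{thm:genfuns}(i) (equivalently Proposition \ref{prop:genfuns2}) is essential, and once that is granted the rest reduces to the one-line $q$-integer simplification $[n]_q(q^n+q^{-n})=[2n]_q$.
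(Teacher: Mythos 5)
Your proof is correct and follows essentially the same route as the paper, whose proof of Proposition \ref{prop:genfuns3} is simply ``Follows from Theorem \ref{thm:genfuns}''; you have supplied the details that the paper leaves implicit, namely combining the exponentials via Theorem \ref{thm:genfuns}(i),(iv) and the identity $[n]_q(q^n+q^{-n})=[2n]_q$.
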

\begin{proof}
Follows from Theorem \ref{thm:genfuns}. 
\end{proof}

\noindent In this paper we have two main results. We now state our first main result. 

\begin{theorem}\label{thm:genfuns4}\rm
For $m \geq 1$, 
\begin{equation}\label{eq:tG=exp}
\tG(-q^{m-1}t) \star \tG(-q^{m-3}t) \star \cdots \star \tG(-q^{1-m}t)=\exp\left(-\sum_{n=1}^\infty \frac{[mn]_q}{n}xC_{n-1}yt^n\right); 
\end{equation}
\begin{equation}\label{eq:D=exp}
D(-q^{m-1}t) \star D(-q^{m-3}t) \star \cdots \star D(-q^{1-m}t)=\exp\left(\sum_{n=1}^\infty \frac{[mn]_q}{n}xC_{n-1}yt^n\right). 
\end{equation}
In the above equations, the exponential power series is computed with respect to the $q$-shuffle product. 
\end{theorem}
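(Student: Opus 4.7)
The plan is to derive Theorem \ref{thm:genfuns4} directly from Theorem \ref{thm:genfuns} by combining its single-factor exponential formulas, using the commutativity of $\{xC_{n-1}y\}_{n\geq 1}$ under $\star$ together with one short geometric-series computation. The required scalar identity is
\begin{equation*}
\sum_{j=0}^{m-1}q^{(m-1-2j)n}=q^{(m-1)n}+q^{(m-3)n}+\cdots+q^{-(m-1)n}=\frac{q^{mn}-q^{-mn}}{q^n-q^{-n}}=\frac{[mn]_q}{[n]_q},
\end{equation*}
valid for all $m,n\geq 1$ (multiply the numerator and denominator of the geometric sum by $q^n$).

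The main calculation proceeds as follows. I substitute $t\mapsto -q^{m-1-2j}t$ into Theorem \ref{thm:genfuns}(iii) for each $j\in\{0,1,\ldots,m-1\}$. Since $(-q^{m-1-2j}t)^n=(-1)^nq^{(m-1-2j)n}t^n$, the two factors of $(-1)^n$ cancel the $(-1)^n$ already present in the exponent on the right, yielding
\begin{equation*}
\tG(-q^{m-1-2j}t)=\exp\left(-\sum_{n=1}^\infty\frac{[n]_q\,q^{(m-1-2j)n}}{n}\,xC_{n-1}y\,t^n\right).
\end{equation*}
By Theorem \ref{thm:genfuns}(i) the elements $xC_{n-1}y$ all commute under $\star$, so every coefficient appearing in any of these exponents lies in a single commutative subalgebra of $(\mV,\star)$; hence $\exp(A)\star\exp(B)=\exp(A+B)$ applies, and
\begin{equation*}
\tG(-q^{m-1}t)\star\cdots\star\tG(-q^{1-m}t)=\exp\left(-\sum_{n=1}^\infty\frac{[n]_q}{n}\Bigl(\sum_{j=0}^{m-1}q^{(m-1-2j)n}\Bigr)xC_{n-1}y\,t^n\right).
\end{equation*}
The geometric-series identity collapses the inner sum to $[mn]_q/[n]_q$, and $[n]_q\cdot[mn]_q/[n]_q=[mn]_q$, which is exactly \eqref{eq:tG=exp}. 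The identity \eqref{eq:D=exp} follows by the same argument starting from Theorem \ref{thm:genfuns}(iv); the only change is an overall sign in the exponent.

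I expect no substantive obstacle here. The only fiddly point is justifying $\exp(A)\star\exp(B)=\exp(A+B)$ for formal power series in $t$ whose coefficients lie in $(\mV,\star)$; this reduces to the scalar case once one checks that all coefficients of $t^n$ appearing in any of the exponents lie in the commutative subalgebra generated by $\{xC_{n-1}y\}_{n\geq 1}$, which is precisely Theorem \ref{thm:genfuns}(i). Of course, if one wishes to avoid invoking Theorem \ref{thm:genfuns} at all (in line with the paper's stated goal of a self-contained treatment), the same identity can be obtained later as a corollary of the second main result once the Catalan-word closed form for \eqref{eq:expgenfun} is established; but the combinatorial approach above is the shortest route given what is already available in the excerpt.
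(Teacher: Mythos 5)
Your proposal is correct and matches the paper's own proof: the paper likewise eliminates each factor $\tG(-q^{m-1-2j}t)$ (resp.\ $D(-q^{m-1-2j}t)$) using Theorem \ref{thm:genfuns}(iii) (resp.\ (iv)), combines the exponentials via the commutativity from Theorem \ref{thm:genfuns}(i), and collapses the geometric sum $(q^n)^{m-1}+(q^n)^{m-3}+\cdots+(q^n)^{1-m}=(q^{mn}-q^{-mn})/(q^n-q^{-n})$. Your sign bookkeeping ($(-1)^n$ cancellation) and the remark justifying $\exp(A)\star\exp(B)=\exp(A+B)$ are both accurate and consistent with what the paper does.
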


\noindent We now state our second main result. 

\begin{theorem}\label{thm:closedform}\rm
For $m \geq 1$ the following (i), (ii) hold. 
\begin{enumerate}
\item For $n \in \mN$, the coefficient of $t^n$ in either side of \eqref{eq:tG=exp} is 
\begin{equation}\label{eq:coeff-m}
\sum a_1a_2 \cdots a_{2n}\prod_{i=1}^{2n}[\overline{a}_1+\overline{a}_2+\cdots+\overline{a}_{i-1}-m(\overline{a}_i+1)/2]_q, 
\end{equation}
where the sum is over all Catalan words $a_1a_2 \cdots a_{2n}$ of length $2n$. 
\item For $n \in \mN$, the coefficient of $t^n$ in either side of \eqref{eq:D=exp} is 
\begin{equation}\label{eq:coeff+m}
\sum a_1a_2 \cdots a_{2n}\prod_{i=1}^{2n}[\overline{a}_1+\overline{a}_2+\cdots+\overline{a}_{i-1}+m(\overline{a}_i+1)/2]_q, 
\end{equation}
where the sum is over all Catalan words $a_1a_2 \cdots a_{2n}$ of length $2n$. 
\end{enumerate}
\end{theorem}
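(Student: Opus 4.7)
By Theorem \ref{thm:genfuns4}, the RHS of \eqref{eq:D=exp} equals $\exp\bigl(\sum_{n\geq 1}\tfrac{[mn]_q}{n}xC_{n-1}y\,t^n\bigr)$, whose exponent lies in the commutative $\star$-subalgebra generated by $\{xC_{n-1}y\}_{n \geq 1}$ (Proposition \ref{prop:genfuns2}). By the same derivative argument used to prove Proposition \ref{prop:expderivative}, the coefficients $F_n^{(m)}$ of this exponential are uniquely characterized by $F_0^{(m)} = \m1$ together with
\begin{equation*}
n F_n^{(m)} \;=\; \sum_{k=1}^n [mk]_q\, xC_{k-1}y \star F_{n-k}^{(m)} \qquad (n \geq 1).
\end{equation*}
Let $R_n^{(m)}$ denote the closed form proposed in \eqref{eq:coeff+m}; clearly $R_0^{(m)} = \m1$. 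Because $[-mk]_q = -[mk]_q$, the recurrence one obtains analogously from \eqref{eq:tG=exp} is exactly the above recurrence with $m$ replaced by $-m$. Hence both parts (i) and (ii) of Theorem \ref{thm:closedform} reduce to the universal combinatorial identity
\begin{equation*}
n R_n^{(m)} \;=\; \sum_{k=1}^n [mk]_q\, xC_{k-1}y \star R_{n-k}^{(m)} \qquad (n \geq 1,\ m \in \mZ). \qquad (\ast)
\end{equation*}
Observe that the $m = 1$ case of $(\ast)$ is equivalent to Proposition \ref{prop:expderivative}(iii) via the identification $D_n = (-1)^n R_n^{(1)}$, which is a useful sanity check.

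To establish $(\ast)$, I would fix a Catalan word $w = a_1 a_2 \cdots a_{2n}$ and compare its coefficients on both sides. With the height function $h_j = \overline{a}_1 + \cdots + \overline{a}_j$, the LHS contributes $n\prod_{i=1}^{2n}[h_{i-1}+m(\overline{a}_i+1)/2]_q$. For each $k$, the coefficient of $w$ on the RHS in $xC_{k-1}y \star R_{n-k}^{(m)}$ is obtained by expanding the $\star$-product recursively: it is a $q$-weighted sum over every realization of $w$ as a $q$-shuffle of a word $x\,u'\,y$ with $u'$ Catalan of length $2(k-1)$ (contributing to $xC_{k-1}y$) together with a Catalan word of length $2(n-k)$ (contributing to $R_{n-k}^{(m)}$). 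To expose the factor $n$ on the LHS, the natural parameterization is to sum over the $n$ positions of $x$ in $w$, declaring one of them to be the outer $x$ of the distinguished $xu'y$ factor; for each such marker, the admissible placements of the outer $y$ and of the $2k-2$ inner positions of $u'$ are constrained by the requirement that both induced sub-words be Catalan of the prescribed shape.

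The main obstacle is the precise bookkeeping of the $q$-weights produced by the shuffle, and the verification that, after summing over all admissible completions for a fixed marked $x$, the contributions assemble into $\prod_{i=1}^{2n}[h_{i-1}+m(\overline{a}_i+1)/2]_q$; summing over the $n$ marker choices then yields the claimed $n$ times this product. I expect this to proceed by induction on $n$, peeling off the marked $x$ together with its partner $y$ to reduce to the identity for a smaller Catalan word, and using standard $q$-integer manipulations such as $[a+b]_q = q^b[a]_q + q^{-a}[b]_q$ to combine the $q$-powers arising from the recursive application of $\star$. Once $(\ast)$ is proved, the uniqueness of power-series solutions to the characterizing recurrence immediately delivers both parts (i) and (ii) of Theorem \ref{thm:closedform}.
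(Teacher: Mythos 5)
Your outer reduction is sound and in fact coincides with the paper's own strategy: your identity $(\ast)$ is precisely the coefficient form of Lemma \ref{lem:dD/dt} (equivalently, of Theorem \ref{thm:D=exp}, which asserts $\Delta^{(m)}(t)=\exp\bigl(\sum_{n\geq 1}\tfrac{[mn]_q}{n}\nabla^{(0)}_nt^n\bigr)$ with $\nabla^{(0)}_n=xC_{n-1}y$), and the paper deduces Theorem \ref{thm:closedform} from that statement together with Theorem \ref{thm:genfuns4} exactly as you do. The uniqueness argument via the characterizing recurrence is fine in characteristic zero, and the observation that replacing $m$ by $-m$ handles part (i) is correct since $[-mk]_q=-[mk]_q$.

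The gap is that $(\ast)$ itself --- which carries the entire mathematical content of the theorem --- is not proved. What you offer is a plan: mark one of the $n$ occurrences of $x$ in $w$, sum over admissible shuffle realizations, and hope the $q$-weights assemble into $\prod_{i}[h_{i-1}+m(\overline{a}_i+1)/2]_q$. You yourself flag the $q$-weight bookkeeping as ``the main obstacle'' and leave it unresolved, and it is genuinely hard as posed: the coefficient of $w$ in $xC_{k-1}y\star R^{(m)}_{n-k}$ is a sum over interleavings of an entire $2k$-letter block, each weighted by a product of $q^{\pm 2}$ over all crossings, and no closed form for a single interleaving is available. The paper circumvents this by never shuffling a whole block at once. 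It first proves the one-letter insertion recurrence $\nabla^{(m)}_{n+1}=\frac{(q^m x\star\nabla^{(m)}_n-q^{-m}\nabla^{(m)}_n\star x)y}{q-q^{-1}}$ (Proposition \ref{prop:N^m_nrecursion}), where inserting $x$ after position $i$ carries the single weight $[m+2(\overline{a}_1+\cdots+\overline{a}_i)]_q$ (Lemma \ref{lem:innerprod0}) and the sum over insertion points telescopes in profile coordinates via $[s+1]_q[s+m]_q-[s]_q[s+m-1]_q=[m+2s]_q$ (Lemma \ref{lem:N^m(profile)}). It separately proves that $\nabla^{(m)}_n$ commutes with $xy$ (Proposition \ref{prop:xycommute}), and then bootstraps these two facts by induction into the convolution identity $\Delta^{(m)}_{n+1}y^{-1}=[m]_q\sum_{k}q^{-mk}\nabla^{(0)}_{k+1}y^{-1}\star\Delta^{(m)}_{n-k}$ (Lemma \ref{lem:D^m_ny^-1}), from which $(\ast)$ follows by the $\Phi(t)=0$ argument in the proof of Lemma \ref{lem:dD/dt}. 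To complete your proof you would need either to carry out this (or an equivalent) chain of lemmas, or to actually execute the direct weight count you sketch; as written, the proposal establishes only the easy outer layer of the theorem.
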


\noindent In this paper, we will prove Theorems \ref{thm:genfuns4} and \ref{thm:closedform}. Our proof is from scratch. We will not cite earlier results from the literature. The proof for Theorems \ref{thm:genfuns4} and \ref{thm:closedform} takes up most of this paper. In the course of this proof, we will prove Theorem \ref{thm:genfuns} from scratch, without citing earlier results from the literature. 

\medskip
\noindent In order to prove Theorems \ref{thm:genfuns}, \ref{thm:genfuns4}, \ref{thm:closedform}, we will use the following strategy. We will introduce two types of elements in $\mV$, denoted by $\{\Delta^{(m)}_n\}_{m \in \mZ, n \in \mN}$ and $\{\nabla^{(m)}_n\}_{m \in \mZ, n \geq 1}$. We will develop a uniform theory of these elements. This theory will be used to establish Theorems \ref{thm:genfuns}, \ref{thm:genfuns4}, \ref{thm:closedform} from scratch. The elements $\{\Delta^{(m)}_n\}_{m \in \mZ, n \in \mN}$ and $\{\nabla^{(m)}_n\}_{m \in \mZ, n \geq 1}$ will be introduced in Sections 4 and 5. 

\section{Elevation sequences, Dyck paths, and profiles}
In this section, we recall a few notions that will be useful. 

\begin{definition}\label{def:elevation_sequence}\rm
(See \cite[Definition 2.6]{ter_catalan}.) For $n \in \mN$ and a word $w=a_1a_2 \cdots a_{n}$, the \textit{elevation sequence} of $w$ is the sequence $(e_0,e_1,\ldots,e_{n})$, where $e_i=\overline{a}_1+\overline{a}_2+\cdots+\overline{a}_i$ for $0 \leq i \leq n$. 
\end{definition}

\begin{example}\rm
In the table below, we list the Catalan words $w$ of length $\leq 6$ and the corresponding elevation sequences. 
\begin{center}
\begin{tabular}{ c|c } 
$w$ & elevation sequence of $w$\\
\hline
$\m1$ & $(0)$\\
$xy$ & $(0,1,0)$\\
$xyxy$ & $(0,1,0,1,0)$\\
$xxyy$ & $(0,1,2,1,0)$\\
$xyxyxy$ & $(0,1,0,1,0,1,0)$\\
$xxyyxy$ & $(0,1,2,1,0,1,0)$\\
$xyxxyy$ & $(0,1,0,1,2,1,0)$\\
$xxyxyy$ & $(0,1,2,1,2,1,0)$\\
$xxxyyy$ & $(0,1,2,3,2,1,0)$
\end{tabular}
\end{center}
\end{example}

\noindent Referring to Definition \ref{def:elevation_sequence}, we have $e_0=0$. 
\begin{lemma}\label{lem:eleCat}\rm
Let $w$ be a word with elevation sequence $(e_0,e_1,\ldots,e_n)$. The word $w$ is Catalan if and only if $e_i \geq 0$ for $1 \leq i \leq n-1$ and $e_n=0$. 
\end{lemma}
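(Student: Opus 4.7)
The plan is to observe that this lemma is purely a translation of Definition \ref{def:Cat} into the notation of Definition \ref{def:elevation_sequence}, so the proof reduces to substituting one definition into the other. First I would recall that for $w = a_1 a_2 \cdots a_n$, the elevation sequence entries are defined by $e_i = \overline{a}_1 + \overline{a}_2 + \cdots + \overline{a}_i$ for $0 \leq i \leq n$. With this identity, the Catalan conditions in Definition \ref{def:Cat}, namely ``$\overline{a}_1 + \overline{a}_2 + \cdots + \overline{a}_i \geq 0$ for $1 \leq i \leq n-1$'' and ``$\overline{a}_1 + \overline{a}_2 + \cdots + \overline{a}_n = 0$'', become word-for-word ``$e_i \geq 0$ for $1 \leq i \leq n-1$'' and ``$e_n = 0$'', respectively.

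Since each direction of the biconditional is established by the same substitution, both implications follow simultaneously, and no induction or case analysis is required. I would note in passing that $e_0 = 0$ by convention (the empty sum), which is consistent with the starting point of any Catalan elevation sequence but plays no role in the stated range $1 \leq i \leq n-1$. The only conceivable obstacle is a notational one: to make sure that the index range in Definition \ref{def:Cat} matches the index range for $e_i$ being asserted here, but these coincide exactly. So the proof proposal is a single short sentence citing Definition \ref{def:Cat} and Definition \ref{def:elevation_sequence}.
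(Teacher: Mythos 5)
Your proposal is correct and matches the paper's proof exactly: the paper also establishes this lemma as an immediate consequence of Definitions \ref{def:Cat} and \ref{def:elevation_sequence}, with the substitution $e_i = \overline{a}_1 + \cdots + \overline{a}_i$ turning one set of conditions verbatim into the other.
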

\begin{proof}
Follows from Definitions \ref{def:Cat} and \ref{def:elevation_sequence}. 
\end{proof}

\noindent Next we give a way to visualize a word using its elevation sequence. 

\begin{definition}\label{def:Dyck}\rm
(See \cite[Section 8.5]{brualdi}.) Let $n \in \mN$. For a word with elevation sequence $(e_0,e_1,\ldots,e_n)$, the corresponding \textit{Dyck path} is a diagonal lattice path with $n+1$ vertices, where for $0 \leq i \leq n$ the $i$-th vertex is the lattice point $(i,e_i)$. 
\end{definition}

\begin{example}\label{ex:DyckCat}\rm
In the picture below, we give the Dyck paths for the Catalan words of length $\leq 6$. 
\begin{center}
\begin{tikzpicture}[scale=0.7]
\scoped[xshift=3cm]
{
\node[scale=0.4, circle, fill, label={270:$\m1$}] at (0,0) {};
}

\scoped[xshift=5cm]
{
\draw[help lines] (0,0) grid +(2,1);
\draw[line width=2pt] (0,0)--(1,1)--(2,0);
\node[label={270:$x$}] at (0.5,0) {};
\node[label={270:$y$}] at (1.5,0) {};
}

\scoped[xshift=9cm]
{
\draw[help lines] (0,0) grid +(4,2);
\draw[line width=2pt] (0,0)--(1,1)--(2,0)--(3,1)--(4,0);
\node[label={270:$x$}] at (0.5,0) {};
\node[label={270:$y$}] at (1.5,0) {};
\node[label={270:$x$}] at (2.5,0) {};
\node[label={270:$y$}] at (3.5,0) {};
}

\scoped[xshift=15cm]
{
\draw[help lines] (0,0) grid +(4,2);
\draw[line width=2pt] (0,0)--(1,1)--(2,2)--(3,1)--(4,0);
\node[label={270:$x$}] at (0.5,0) {};
\node[label={270:$x$}] at (1.5,0) {};
\node[label={270:$y$}] at (2.5,0) {};
\node[label={270:$y$}] at (3.5,0) {};
}

\scoped[yshift=-5cm]
{
\draw[help lines] (0,0) grid +(6,3);
\draw[color=black, line width=2] (0,0)--(1,1)--(2,0)--(3,1)--(4,0)--(5,1)--(6,0);
\node[label={270:$x$}] at (0.5,0) {};
\node[label={270:$y$}] at (1.5,0) {};
\node[label={270:$x$}] at (2.5,0) {};
\node[label={270:$y$}] at (3.5,0) {};
\node[label={270:$x$}] at (4.5,0) {};
\node[label={270:$y$}] at (5.5,0) {};
}

\scoped[xshift=8cm, yshift=-5cm]
{
\draw[help lines] (0,0) grid +(6,3);
\draw[color=black, line width=2] (0,0)--(1,1)--(2,2)--(3,1)--(4,0)--(5,1)--(6,0);
\node[label={270:$x$}] at (0.5,0) {};
\node[label={270:$x$}] at (1.5,0) {};
\node[label={270:$y$}] at (2.5,0) {};
\node[label={270:$y$}] at (3.5,0) {};
\node[label={270:$x$}] at (4.5,0) {};
\node[label={270:$y$}] at (5.5,0) {};
}

\scoped[xshift=16cm, yshift=-5cm]
{
\draw[help lines] (0,0) grid +(6,3);
\draw[color=black, line width=2] (0,0)--(1,1)--(2,0)--(3,1)--(4,2)--(5,1)--(6,0);
\node[label={270:$x$}] at (0.5,0) {};
\node[label={270:$y$}] at (1.5,0) {};
\node[label={270:$x$}] at (2.5,0) {};
\node[label={270:$x$}] at (3.5,0) {};
\node[label={270:$y$}] at (4.5,0) {};
\node[label={270:$y$}] at (5.5,0) {};
}

\scoped[xshift=4cm, yshift=-10cm]
{
\draw[help lines] (0,0) grid +(6,3);
\draw[color=black, line width=2] (0,0)--(1,1)--(2,2)--(3,1)--(4,2)--(5,1)--(6,0);
\node[label={270:$x$}] at (0.5,0) {};
\node[label={270:$x$}] at (1.5,0) {};
\node[label={270:$y$}] at (2.5,0) {};
\node[label={270:$x$}] at (3.5,0) {};
\node[label={270:$y$}] at (4.5,0) {};
\node[label={270:$y$}] at (5.5,0) {};
}

\scoped[xshift=12cm, yshift=-10cm]
{
\draw[help lines] (0,0) grid +(6,3);
\draw[color=black, line width=2] (0,0)--(1,1)--(2,2)--(3,3)--(4,2)--(5,1)--(6,0);
\node[label={270:$x$}] at (0.5,0) {};
\node[label={270:$x$}] at (1.5,0) {};
\node[label={270:$x$}] at (2.5,0) {};
\node[label={270:$y$}] at (3.5,0) {};
\node[label={270:$y$}] at (4.5,0) {};
\node[label={270:$y$}] at (5.5,0) {};
}
\end{tikzpicture}
\end{center}
\end{example}

\noindent Observe in Example \ref{ex:DyckCat} that the Dyck path of a word is uniquely determined by its peaks and valleys. This observation is captured in the following definition. 

\begin{definition}\label{def:profile}\rm
(See \cite[Definition 2.8]{ter_catalan}.) For a word $w$ with elevation sequence $(e_0,e_1,\ldots,e_n)$, the \textit{profile} of $w$ is the subsequence of $(e_0,e_1,\ldots,e_n)$ obtained by removing all the $e_i$ such that $1 \leq i \leq n-1$ and $e_i-e_{i-1}=e_{i+1}-e_i$. 

\medskip
\noindent In other words, the profile of a word $w$ is the subsequence of the elevation sequence of $w$ consisting of the end points and turning points. 

\medskip
\noindent By a \textit{profile} we mean the profile of a word. 

\medskip
\noindent By a \textit{nontrivial profile} (resp. \textit{Catalan profile}) we mean the profile of a nontrivial word (resp. Catalan word). 
\end{definition}

\begin{example}\rm
In the table below, we list the Catalan words $w$ of length $\leq 6$ and the corresponding profiles. 
\begin{center}
\begin{tabular}{ c|c }
$w$ & profile of $w$\\
\hline
$\m1$ & $(0)$\\
$xy$ & $(0,1,0)$\\
$xyxy$ & $(0,1,0,1,0)$\\
$xxyy$ & $(0,2,0)$\\
$xyxyxy$ & $(0,1,0,1,0,1,0)$\\
$xxyyxy$ & $(0,2,0,1,0)$\\
$xyxxyy$ & $(0,1,0,2,0)$\\
$xxyxyy$ & $(0,2,1,2,0)$\\
$xxxyyy$ & $(0,3,0)$\\
\end{tabular}
\end{center}
\end{example}

\begin{lemma}\label{lem:profileCat}\rm
A profile $(l_0,h_1,l_1,\ldots,h_r,l_r)$ is Catalan if and only if $l_i \geq 0$ for $1 \leq i \leq r-1$ and $l_r=0$. 
\end{lemma}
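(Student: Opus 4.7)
The plan is to derive the lemma by directly translating the Catalan conditions on the elevation sequence (Lemma~\ref{lem:eleCat}) into conditions on the profile values. I will first establish the geometric picture relating the profile to the elevation sequence, and then handle the two directions of the biconditional separately.

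To set up, for a word $w$ with profile $(l_0, h_1, l_1, \ldots, h_r, l_r)$, I would fix indices $0 = i_0 < i_1 < \cdots < i_{2r} = n$ in the elevation sequence at which the profile entries occur, so that $e_{i_{2k}} = l_k$ and $e_{i_{2k-1}} = h_k$. By the definition of profile, between $e_{i_{2k-2}} = l_{k-1}$ and $e_{i_{2k-1}} = h_k$ the elevation strictly increases by $+1$ at each step, and between $e_{i_{2k-1}} = h_k$ and $e_{i_{2k}} = l_k$ it strictly decreases by $-1$ at each step. From this I would extract the key observation that for any $j$ with $i_{2k-2} \leq j \leq i_{2k}$, one has $e_j \geq \min(l_{k-1}, l_k)$. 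In particular, the interior local minima of the elevation sequence are precisely $l_1, l_2, \ldots, l_{r-1}$, while $l_0 = e_0 = 0$ and $l_r = e_n$.

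For the forward direction, if $w$ is Catalan then $l_r = e_n = 0$ is immediate, and for $1 \leq k \leq r - 1$ the value $l_k = e_{i_{2k}}$ occurs at an interior index $1 \leq i_{2k} \leq n - 1$, so Lemma~\ref{lem:eleCat} forces $l_k \geq 0$. For the converse, assume $l_r = 0$ and $l_k \geq 0$ for $1 \leq k \leq r - 1$. Together with the automatic equality $l_0 = 0$, this gives $l_k \geq 0$ for all $0 \leq k \leq r$, and then the bump-by-bump bound $e_j \geq \min(l_{k-1}, l_k) \geq 0$ shows that $e_j \geq 0$ for every $j$; combined with $e_n = l_r = 0$, this is exactly the Catalan criterion of Lemma~\ref{lem:eleCat}. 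The edge case $r = 0$ corresponds to the trivial word with profile $(0)$, and both sides of the biconditional hold vacuously.

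No genuine obstacle arises in this proof; the lemma is essentially a definitional unpacking once the bump-structure observation is in hand. The only point requiring some care is distinguishing interior from boundary profile indices, which the single inequality $e_j \geq \min(l_{k-1}, l_k)$ handles uniformly.
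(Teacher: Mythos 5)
Your proof is correct and follows the same route the paper intends: the paper's proof is simply ``Follows from Lemma~\ref{lem:eleCat} and Definition~\ref{def:profile},'' and your argument is exactly that deduction written out in full, with the bound $e_j \geq \min(l_{k-1},l_k)$ on each monotone segment supplying the only nontrivial step in the converse direction.
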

\begin{proof}
Follows from Lemma \ref{lem:eleCat} and Definition \ref{def:profile}. 
\end{proof}

\noindent We end this section with an observation. 

\begin{lemma}\label{lem:w=tG}\rm
Let $n \in \mN$. For a word $w$ of length $2n$, the following are equivalent: 
\begin{enumerate}
\item $w=\tG_n$; 
\item each entry in the elevation sequence of $w$ is either $0$ or $1$. 
\end{enumerate}
\end{lemma}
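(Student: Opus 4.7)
The plan is to verify both implications directly from the definitions, with no induction or auxiliary machinery needed.

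For the direction (i) $\Rightarrow$ (ii), I would simply expand $\tG_n=(xy)^n$ using Definition \ref{def:tGn} and compute the elevation sequence by hand. Since $\overline{x}=1$ and $\overline{y}=-1$ by Definition \ref{def:weight}, the partial sums alternate $0,1,0,1,\ldots,0$, so every entry lies in $\{0,1\}$.

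For the converse (ii) $\Rightarrow$ (i), let $w=a_1a_2\cdots a_{2n}$ have elevation sequence $(e_0,e_1,\ldots,e_{2n})\subseteq\{0,1\}$. The key observation is that by Definition \ref{def:elevation_sequence}, $e_i-e_{i-1}=\overline{a}_i\in\{+1,-1\}$ for each $i$. Combined with $e_{i-1},e_i\in\{0,1\}$, this rules out the transitions $0\to 0$ and $1\to 1$, leaving only $0\to 1$ (which forces $a_i=x$) and $1\to 0$ (which forces $a_i=y$). Since $e_0=0$, the elevation sequence is forced to be $0,1,0,1,\ldots,1,0$, and correspondingly $a_i=x$ for odd $i$ and $a_i=y$ for even $i$. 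Hence $w=(xy)^n=\tG_n$.

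There is no real obstacle: the lemma is a short parity check that follows once one notices that an elevation sequence constrained to $\{0,1\}$ has only two admissible step types, and these must alternate starting from $e_0=0$. I would include the computation in a single paragraph, appealing to Definitions \ref{def:weight}, \ref{def:tGn}, and \ref{def:elevation_sequence}.
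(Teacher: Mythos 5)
Your proof is correct and matches the paper's approach: the paper simply states that the lemma follows from Definitions \ref{def:tGn} and \ref{def:elevation_sequence}, and your paragraph supplies exactly the routine verification that is being left implicit (including the key point that steps of size $\pm 1$ confined to $\{0,1\}$ must alternate starting from $e_0=0$). No gaps; the $n=0$ case is also covered since $(xy)^0=\m1=\tG_0$.
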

\begin{proof}
Follows from Definitions \ref{def:tGn} and \ref{def:elevation_sequence}. 
\end{proof}

\section{The elements $\Delta^{(m)}_n$}
In this section, we introduce the elements $\Delta^{(m)}_n$. These elements are inspired by \eqref{eq:coeff-m} and \eqref{eq:coeff+m}. 

\medskip
\noindent In \eqref{eq:coeff-m} and \eqref{eq:coeff+m} it was assumed that $m \geq 1$. In the following definition, it is convenient to assume $m \in \mZ$. 

\begin{definition}\label{def:D^m(w)}\rm
Let $m \in \mZ$. For $n \in \mN$ and a Catalan word $w=a_1a_2 \cdots a_{2n}$, define the scalar 
\begin{equation*}
\Delta^{(m)}(w)=\prod_{i=1}^{2n}[\overline{a}_1+\overline{a}_2+\cdots+\overline{a}_{i-1}+m(\overline{a}_i+1)/2]_q. 
\end{equation*}
For $n=0$, we have $w=\m1$. In this case, $\Delta^{(m)}(w)=1$. 
\end{definition}

\noindent The following formula will be useful. 

\begin{lemma}\label{lem:D^m(w)}\rm
Let $m \in \mZ$. For $n \in \mN$ and a Catalan word $w=a_1a_2 \cdots a_{2n}$, 
\begin{equation}\label{eq:D^m(w)xy}
\Delta^{(m)}(w)=\left(\prod_{\substack{1 \leq i \leq 2n\\a_i=x}}[\overline{a}_1+\overline{a}_2+\cdots+\overline{a}_{i-1}+m]_q\right)\left(\prod_{\substack{1 \leq i \leq 2n\\a_i=y}}[\overline{a}_1+\overline{a}_2+\cdots+\overline{a}_{i-1}]_q\right). 
\end{equation}
\end{lemma}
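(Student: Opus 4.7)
The plan is to observe that the identity reduces, term by term in the product, to a case split on whether the letter $a_i$ is $x$ or $y$. Specifically, I would partition the index set $\{1, 2, \ldots, 2n\}$ in Definition \ref{def:D^m(w)} into the subset where $a_i = x$ and the subset where $a_i = y$, so that
\begin{equation*}
\Delta^{(m)}(w) = \left(\prod_{\substack{1 \leq i \leq 2n\\a_i=x}} [\overline{a}_1 + \cdots + \overline{a}_{i-1} + m(\overline{a}_i+1)/2]_q\right) \left(\prod_{\substack{1 \leq i \leq 2n\\a_i=y}} [\overline{a}_1 + \cdots + \overline{a}_{i-1} + m(\overline{a}_i+1)/2]_q\right).
\end{equation*}

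Next I would invoke equation \eqref{eq:xy01}, which gives $(\overline{x}+1)/2 = 1$ and $(\overline{y}+1)/2 = 0$. Substituting these values into the corresponding factors yields $m(\overline{a}_i+1)/2 = m$ when $a_i = x$ and $m(\overline{a}_i+1)/2 = 0$ when $a_i = y$. This transforms the expression into the right-hand side of \eqref{eq:D^m(w)xy}.

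There is essentially no obstacle here; the lemma is a direct bookkeeping identity whose content is the evaluation of $(\overline{a}_i+1)/2$ in the two possible cases for a letter. The only reason to state it as a separate lemma is its utility in later computations, where having the product split according to the letter type (and with the clean form $[\cdots + m]_q$ for $x$-factors and $[\cdots]_q$ for $y$-factors) will be more convenient than the uniform formula of Definition \ref{def:D^m(w)}.
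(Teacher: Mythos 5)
Your proof is correct and matches the paper's argument exactly: the paper also derives \eqref{eq:D^m(w)xy} by partitioning the product in Definition \ref{def:D^m(w)} according to the letter type and evaluating $(\overline{a}_i+1)/2$ via \eqref{eq:xy01}. Nothing further is needed.
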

\begin{proof}
Follows from Definition \ref{def:D^m(w)} and \eqref{eq:xy01}. 
\end{proof}

\begin{example}\rm
In the table below, we list the Catalan words $w$ of length $\leq 6$ and the corresponding scalars $\Delta^{(m)}(w)$ for $-3 \leq m \leq 3$. 
\begin{center}
\begin{tabular}{ c|c|c|c|c|c|c|c }
$w$ & $\Delta^{(-3)}(w)$ & $\Delta^{(-2)}(w)$ & $\Delta^{(-1)}(w)$ & $\Delta^{(0)}(w)$ & $\Delta^{(1)}(w)$ & $\Delta^{(2)}(w)$ & $\Delta^{(3)}(w)$\\[1mm]
\hline
$\m1$ & $1$ & $1$ & $1$ & $1$ & $1$ & $1$ & $1$\\[1mm]
$xy$ & $-[3]_q$ & $-[2]_q$ & $-1$ & $0$ & $1$ & $[2]_q$ & $[3]_q$\\[1mm]
$xyxy$ & $[3]_q^2$ & $[2]_q^2$ & $1$ & $0$ & $1$ & $[2]_q^2$ & $[3]_q^2$\\[1mm]
$xxyy$ & $[2]_q^2[3]_q$ & $[2]_q^2$ & $0$ & $0$ & $[2]_q^2$ & $[2]_q^2[3]_q$ & $[2]_q[3]_q[4]_q$\\[1mm]
$xyxyxy$ & $-[3]_q^3$ & $-[2]_q^3$ & $-1$ & $0$ & $1$ & $[2]_q^3$ & $[3]_q^3$\\[1mm]
$xxyyxy$ & $-[2]_q^2[3]_q^2$ & $-[2]_q^3$ & $0$ & $0$ & $[2]_q^2$ & $[2]_q^3[3]_q$ & $[2]_q[3]_q^2[4]_q$\\[1mm]
$xyxxyy$ & $-[2]_q^2[3]_q^2$ & $-[2]_q^3$ & $0$ & $0$ & $[2]_q^2$ & $[2]_q^3[3]_q$ & $[2]_q[3]_q^2[4]_q$\\[1mm]
$xxyxyy$ & $-[2]_q^4[3]_q$ & $-[2]_q^3$ & $0$ & $0$ & $[2]_q^4$ & $[2]_q^3[3]_q^2$ & $[2]_q^2[3]_q[4]_q^2$\\[1mm]
$xxxyyy$ & $-[2]_q^2[3]_q^2$ & $0$ & $0$ & $0$ & $[2]_q^2[3]_q^2$ & $[2]_q^2[3]_q^2[4]_q$ & $[2]_q[3]_q^2[4]_q[5]_q$
\end{tabular}
\end{center}
\end{example}

\begin{lemma}\label{lem:D^m(w)=0}\rm
Let $m \leq -1$. For $n \in \mN$ and a Catalan word $w=a_1a_2 \cdots a_{2n}$, we have $\Delta^{(m)}(w) \neq 0$ if and only if $\overline{a}_1+\overline{a}_2+\cdots+\overline{a}_i \leq |m|$ for $0 \leq i \leq 2n$. 
\end{lemma}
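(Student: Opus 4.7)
The plan is to reduce the question ``$\Delta^{(m)}(w)\neq 0$?'' to a question about the elevation sequence via Lemma \ref{lem:D^m(w)}, and then analyze the vanishing condition position by position. Let $(e_0,e_1,\ldots,e_{2n})$ denote the elevation sequence of $w$. By Lemma \ref{lem:D^m(w)}, $\Delta^{(m)}(w)$ factors as
$$\Delta^{(m)}(w)=\prod_{i:\,a_i=x}[e_{i-1}+m]_q\cdot\prod_{i:\,a_i=y}[e_{i-1}]_q.$$
Since $q$ is not a root of unity, $[k]_q=0$ if and only if $k=0$. Hence $\Delta^{(m)}(w)\neq 0$ iff $e_{i-1}\neq 0$ at every $y$-position and $e_{i-1}\neq -m=|m|$ at every $x$-position.

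First I would dispose of the $y$-factors. For a Catalan word and an index $i$ with $a_i=y$, we have $e_i=e_{i-1}-1$; if $1\leq i\leq 2n-1$ then $e_i\geq 0$ forces $e_{i-1}\geq 1$, and if $i=2n$ then $e_{2n}=0$ forces $e_{2n-1}=1$. Thus the $y$-factors are automatically nonzero, and the vanishing of $\Delta^{(m)}(w)$ is governed entirely by the $x$-positions.

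The heart of the argument is therefore the equivalence
$$\bigl(e_{i-1}\neq |m|\ \text{whenever}\ a_i=x\bigr)\quad\Longleftrightarrow\quad\bigl(e_i\leq |m|\ \text{for}\ 0\leq i\leq 2n\bigr).$$
The backward direction is immediate: if every $e_i\leq |m|$ and $a_i=x$, then $e_{i-1}=e_i-1\leq |m|-1<|m|$. For the forward direction I would take the smallest index $i$ with $e_i>|m|$ (assuming such exists) and use the fact that $e_i-e_{i-1}=\pm 1$: by minimality $e_{i-1}\leq |m|$, so the only way to exceed $|m|$ is $e_{i-1}=|m|$ and $\overline{a}_i=1$, i.e.\ $a_i=x$, contradicting the hypothesis. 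Combining these observations yields the claim.

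I do not anticipate a serious obstacle; the only subtlety is remembering that the Catalan condition already gives $e_{i-1}\geq 1$ at every $y$-position (including the last position), so that the $y$-factors never cause vanishing and the condition collapses to a statement about the maximum height of the Dyck path.
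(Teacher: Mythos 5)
Your proposal is correct and follows essentially the same route as the paper: both reduce to the factorization in Lemma \ref{lem:D^m(w)}, note that the $y$-factors never vanish for a Catalan word, and identify the vanishing of an $x$-factor (i.e.\ $e_{i-1}=|m|$ at some $x$-position) with the elevation sequence exceeding $|m|$. The only difference is that you spell out the first-crossing argument and the $y$-factor check, which the paper states as a one-line ``Note that \dots if and only if \dots'' without proof.
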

\begin{proof}
Note that there exists an integer $i$ ($0 \leq i \leq 2n$) such that $\overline{a}_1+\overline{a}_2+\cdots+\overline{a}_i \geq |m|+1$ if and only if there exists an integer $j$ ($1 \leq j \leq 2n$) such that $a_j=x$ and $\overline{a}_1+\overline{a}_2+\cdots+\overline{a}_{j-1}=|m|$. Evaluating \eqref{eq:D^m(w)xy} using this comment, we obtain the result. 
\end{proof}

\noindent The following notation will be useful. 

\begin{definition}\label{def:Cat_n}\rm
For $n \in \mN$, let $\Cat_n$ denote the set of Catalan words of length $2n$. 
\end{definition}

\begin{definition}\label{def:D^m_n}\rm
For $m \in \mZ$ and $n \in \mN$, define 
\begin{equation*}
\Delta^{(m)}_n=\sum_{w \in \Cat_n}\Delta^{(m)}(w)w. 
\end{equation*}
We remark that $\Delta^{(m)}_0=\m1$. 
\end{definition}

\noindent Next, we examine the cases $-1 \leq m \leq 2$ in Definition \ref{def:D^m_n}. For convenience, we do this in order $m=2,1,-1,0$. 

\begin{lemma}\label{lem:D^2_n}\rm
For $n \in \mN$, 
\begin{equation*}
\Delta^{(2)}_n=C_n. 
\end{equation*}
\end{lemma}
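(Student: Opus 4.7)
The plan is to compare the definitions directly and show that the scalars $\Delta^{(2)}(w)$ and the coefficients in $C_n$ agree for every Catalan word $w$ of length $2n$.

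First, I would specialize the general formula in Definition \ref{def:D^m(w)} to $m=2$. For a Catalan word $w=a_1a_2\cdots a_{2n}$, this gives
\begin{equation*}
\Delta^{(2)}(w)=\prod_{i=1}^{2n}[\overline{a}_1+\overline{a}_2+\cdots+\overline{a}_{i-1}+2(\overline{a}_i+1)/2]_q
=\prod_{i=1}^{2n}[\overline{a}_1+\overline{a}_2+\cdots+\overline{a}_{i-1}+\overline{a}_i+1]_q.
\end{equation*}
Combining $\overline{a}_1+\cdots+\overline{a}_{i-1}+\overline{a}_i=\overline{a}_1+\cdots+\overline{a}_i$ inside each bracket, I would rewrite this as
\begin{equation*}
\Delta^{(2)}(w)=\prod_{i=1}^{2n}[1+\overline{a}_1+\overline{a}_2+\cdots+\overline{a}_i]_q,
\end{equation*}
which is precisely the coefficient of $w$ in $C_n$ as given in Definition \ref{def:Cn}.

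Finally, I would sum over $w\in\Cat_n$. By Definition \ref{def:D^m_n},
\begin{equation*}
\Delta^{(2)}_n=\sum_{w\in\Cat_n}\Delta^{(2)}(w)w=\sum_{w\in\Cat_n}w\prod_{i=1}^{2n}[1+\overline{a}_1+\overline{a}_2+\cdots+\overline{a}_i]_q=C_n,
\end{equation*}
with the boundary case $n=0$ being trivial since both sides equal $\m1$. There is no real obstacle here; the statement is essentially an unwinding of notation, the key step being the observation that setting $m=2$ turns the exponent $m(\overline{a}_i+1)/2$ into $\overline{a}_i+1$, which is exactly what is needed to match the Catalan coefficient.
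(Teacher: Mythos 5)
Your proof is correct and is exactly the paper's argument: the paper simply says the result follows from Definitions \ref{def:Cn} and \ref{def:D^m_n}, and your computation spells out that unwinding, namely that $m=2$ turns the exponent $m(\overline{a}_i+1)/2$ into $\overline{a}_i+1$ so that each factor becomes $[1+\overline{a}_1+\cdots+\overline{a}_i]_q$. Nothing further is needed.
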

\begin{proof}
Follows from Definitions \ref{def:Cn} and \ref{def:D^m_n}. 
\end{proof}

\begin{example}\rm
We will show later in this paper that for $n \in \mN$, 
\begin{equation*}
\Delta^{(1)}_n=(-1)^nD_n. 
\end{equation*}
\end{example}

\begin{lemma}\label{lem:D^-1_n}\rm
For $n \in \mN$, 
\begin{equation*}
\Delta^{(-1)}_n=(-1)^n\tG_n. 
\end{equation*}
\end{lemma}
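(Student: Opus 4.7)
The plan is to show that only one Catalan word contributes to the sum defining $\Delta^{(-1)}_n$, namely $\tG_n=(xy)^n$, and then evaluate $\Delta^{(-1)}(\tG_n)$ directly.

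First, I would apply Lemma \ref{lem:D^m(w)=0} in the case $m=-1$ (so $|m|=1$). This says that for a Catalan word $w=a_1a_2\cdots a_{2n}$ with elevation sequence $(e_0,e_1,\ldots,e_{2n})$, the scalar $\Delta^{(-1)}(w)$ is nonzero precisely when $e_i\leq 1$ for all $0\leq i\leq 2n$. Since $w$ is Catalan, its elevation sequence is already nonnegative, so the condition reduces to every $e_i$ belonging to $\{0,1\}$. By Lemma \ref{lem:w=tG}, this characterizes $w=\tG_n$ among words of length $2n$. Hence in the sum $\sum_{w\in\Cat_n}\Delta^{(-1)}(w)w$ defining $\Delta^{(-1)}_n$, every term except possibly the one indexed by $\tG_n$ vanishes.

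Next I would compute $\Delta^{(-1)}(\tG_n)$ from Lemma \ref{lem:D^m(w)}. Writing $\tG_n=a_1a_2\cdots a_{2n}$ with $a_{2k-1}=x$ and $a_{2k}=y$, the prefix sums are $\overline{a}_1+\cdots+\overline{a}_{2k-2}=0$ at odd positions and $\overline{a}_1+\cdots+\overline{a}_{2k-1}=1$ at even positions. Substituting $m=-1$ into \eqref{eq:D^m(w)xy}, each $x$-factor contributes $[0+(-1)]_q=[-1]_q=-1$ and each $y$-factor contributes $[1]_q=1$. There are $n$ letters of each type, so
\begin{equation*}
\Delta^{(-1)}(\tG_n)=\prod_{k=1}^n (-1)\cdot 1=(-1)^n.
\end{equation*}

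Combining these two steps yields $\Delta^{(-1)}_n=(-1)^n\tG_n$, as desired. There is no substantive obstacle here; the content of the lemma is really that the Catalan condition combined with the vanishing criterion of Lemma \ref{lem:D^m(w)=0} forces a single surviving monomial, which Lemma \ref{lem:w=tG} identifies explicitly.
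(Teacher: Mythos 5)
Your proof is correct and follows essentially the same route as the paper: both use Lemma \ref{lem:D^m(w)=0} together with Lemma \ref{lem:w=tG} to show that $\tG_n$ is the only Catalan word of length $2n$ with $\Delta^{(-1)}(w)\neq 0$, and then evaluate $\Delta^{(-1)}(\tG_n)=(-1)^n$. Your version merely spells out the factor-by-factor computation of $(-1)^n$ that the paper leaves implicit.
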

\begin{proof}
For $w \in \Cat_n$, by Lemmas \ref{lem:w=tG} and \ref{lem:D^m(w)=0} we have $\Delta^{(-1)}(w) \neq 0$ if and only if the elevation sequence of $w$ consists of $0$ and $1$ if and only if $w=\tG_n$. In this case, $\Delta^{(-1)}(w)=(-1)^n$, and the result follows by Definition \ref{def:D^m_n}. 
\end{proof}

\begin{lemma}\label{lem:D^0_n}\rm
For $n \in \mN$, 
\begin{equation*}
\Delta^{(0)}_n=\begin{cases}\m1, &\hspace{1em}\text{ if }n=0;\\0, &\hspace{1em}\text{ if }n \geq 1.\end{cases}
\end{equation*}
\end{lemma}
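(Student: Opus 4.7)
The plan is to observe that the lemma follows immediately by inspecting the product defining $\Delta^{(0)}(w)$ at its first factor. The case $n=0$ is handled by Definition \ref{def:D^m_n} itself, since $\Delta^{(m)}_0 = \m1$ for every integer $m$. So the only substantive case is $n \geq 1$, and for this I would show that $\Delta^{(0)}(w) = 0$ for every single Catalan word $w \in \Cat_n$, which by Definition \ref{def:D^m_n} forces $\Delta^{(0)}_n = 0$.

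For the vanishing, I would specialize $m=0$ in Definition \ref{def:D^m(w)}. For any Catalan word $w = a_1 a_2 \cdots a_{2n}$ of length $2n \geq 2$, this specialization gives
\begin{equation*}
\Delta^{(0)}(w) = \prod_{i=1}^{2n} [\overline{a}_1 + \overline{a}_2 + \cdots + \overline{a}_{i-1}]_q,
\end{equation*}
since the term $m(\overline{a}_i+1)/2$ drops out. The factor indexed by $i=1$ is $[0]_q$, because the sum $\overline{a}_1 + \cdots + \overline{a}_0$ is empty. By the definition \eqref{eq:qint} of the $q$-integer, $[0]_q = 0$, so the entire product vanishes.

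There is essentially no obstacle: the argument uses nothing beyond the definitions of $\Delta^{(m)}(w)$, $\Delta^{(m)}_n$, and $[n]_q$. One could alternatively route through Lemma \ref{lem:D^m(w)}, which at $m=0$ gives the same product, but this would add no content. The only minor point worth stating explicitly is that the index $i=1$ always contributes because $n \geq 1$ implies $2n \geq 2 \geq 1$, so the $i=1$ factor is indeed present in the product for every $w \in \Cat_n$.
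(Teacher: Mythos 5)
Your proof is correct and follows exactly the paper's own argument: specialize $m=0$ in Definition \ref{def:D^m(w)}, note that the $i=1$ factor is $[0]_q=0$ since the empty sum vanishes, and conclude via Definition \ref{def:D^m_n}. No issues.
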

\begin{proof}
The case $n=0$ is trivial. 

\medskip 
\noindent Now assume $n \geq 1$. For a Catalan word $w=a_1a_2 \cdots a_{2n}$, by Definition \ref{def:D^m(w)} we have 
\begin{equation*}
\Delta^{(0)}(w)=\prod_{i=1}^{2n}[\overline{a}_1+\overline{a}_2+\cdots+\overline{a}_{i-1}]_q=0, 
\end{equation*}
where the second equality holds because the first factor in the product is $[0]_q$, and $[0]_q=0$. The result follows by Definition \ref{def:D^m_n}. 
\end{proof}

\section{The elements $\nabla^{(m)}_n$}
In this section, we introduce a variation on $\Delta^{(m)}_n$ called $\nabla^{(m)}_n$. 

\begin{definition}\label{def:N^m(w)}\rm
Let $m \in \mZ$. For $n \geq 1$ and a Catalan word $w=a_1a_2 \cdots a_{2n}$, define the scalar 
\begin{equation*}
\nabla^{(m)}(w)=\prod_{i=2}^{2n}[\overline{a}_1+\overline{a}_2+\cdots+\overline{a}_{i-1}+m(\overline{a}_i+1)/2]_q. 
\end{equation*}
\end{definition}

\noindent We emphasize that $\nabla^{(m)}(w)$ is not defined for $w=\m1$. 

\begin{example}\rm
In the table below, we list the nontrivial Catalan words $w$ of length $\leq 6$ and the corresponding scalars $\nabla^{(m)}(w)$ for $-3 \leq m \leq 3$. 
\begin{center}
\begin{tabular}{ c|c|c|c|c|c|c|c }
$w$ & $\nabla^{(-3)}(w)$ & $\nabla^{(-2)}(w)$ & $\nabla^{(-1)}(w)$ & $\nabla^{(0)}(w)$ & $\nabla^{(1)}(w)$ & $\nabla^{(2)}(w)$ & $\nabla^{(3)}(w)$\\[1mm]
\hline
$xy$ & $1$ & $1$ & $1$ & $1$ & $1$ & $1$ & $1$\\[1mm]
$xyxy$ & $-[3]_q$ & $-[2]_q$ & $-1$ & $0$ & $1$ & $[2]_q$ & $[3]_q$\\[1mm]
$xxyy$ & $-[2]_q^2$ & $-[2]_q$ & $0$ & $[2]_q$ & $[2]_q^2$ & $[2]_q[3]_q$ & $[2]_q[4]_q$\\[1mm]
$xyxyxy$ & $[3]_q^2$ & $[2]_q^2$ & $1$ & $0$ & $1$ & $[2]_q^2$ & $[3]_q^2$\\[1mm]
$xxyyxy$ & $[2]_q^2[3]_q$ & $[2]_q^2$ & $0$ & $0$ & $[2]_q^2$ & $[2]_q^2[3]_q$ & $[2]_q[3]_q[4]_q$\\[1mm]
$xyxxyy$ & $[2]_q^2[3]_q$ & $[2]_q^2$ & $0$ & $0$ & $[2]_q^2$ & $[2]_q^2[3]_q$ & $[2]_q[3]_q[4]_q$\\[1mm]
$xxyxyy$ & $[2]_q^4$ & $[2]_q^2$ & $0$ & $[2]_q^2$ & $[2]_q^4$ & $[2]_q^2[3]_q^2$ & $[2]_q^2[4]_q^2$\\[1mm]
$xxxyyy$ & $[2]_q^2[3]_q$ & $0$ & $0$ & $[2]_q^2[3]_q$ & $[2]_q^2[3]_q^2$ & $[2]_q[3]_q^2[4]_q$ & $[2]_q[3]_q[4]_q[5]_q$
\end{tabular}
\end{center}
\end{example}

\medskip
\noindent In the following lemma, we compare $\nabla^{(m)}(w)$ and $\Delta^{(m)}(w)$ for a nontrivial Catalan word $w$. 

\begin{lemma}\label{lem:D&N(w)}\rm
For $m \in \mZ$ and a nontrivial Catalan word $w$, 
\begin{equation*}
\Delta^{(m)}(w)=[m]_q\nabla^{(m)}(w). 
\end{equation*}
\end{lemma}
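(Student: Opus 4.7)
The plan is to read off the result directly from the defining products by isolating the single factor corresponding to $i=1$. Writing $w = a_1 a_2 \cdots a_{2n}$ with $n \geq 1$, compare
\begin{equation*}
\Delta^{(m)}(w) = \prod_{i=1}^{2n} [\overline{a}_1 + \cdots + \overline{a}_{i-1} + m(\overline{a}_i+1)/2]_q
\end{equation*}
with
\begin{equation*}
\nabla^{(m)}(w) = \prod_{i=2}^{2n} [\overline{a}_1 + \cdots + \overline{a}_{i-1} + m(\overline{a}_i+1)/2]_q.
\end{equation*}
The only difference is the $i=1$ factor appearing in $\Delta^{(m)}(w)$ but not in $\nabla^{(m)}(w)$; this factor is $[m(\overline{a}_1+1)/2]_q$, since the sum $\overline{a}_1 + \cdots + \overline{a}_{i-1}$ is empty when $i=1$.

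Next I invoke Lemma \ref{lem:Catxy}, which says that any nontrivial Catalan word begins with $x$, hence $\overline{a}_1 = 1$ by Definition \ref{def:weight}. Therefore $(\overline{a}_1+1)/2 = 1$, and the $i=1$ factor simplifies to $[m]_q$. Multiplying yields $\Delta^{(m)}(w) = [m]_q \, \nabla^{(m)}(w)$, as required.

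There is essentially no obstacle here; the lemma is a bookkeeping identity that records the fact that the $\nabla$ product is the $\Delta$ product with its leading factor stripped off, and the hypothesis of nontriviality is used only to guarantee $a_1 = x$ so that the stripped factor equals $[m]_q$ uniformly in $w$.
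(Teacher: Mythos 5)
Your proof is correct and matches the paper's approach: the paper simply cites Definitions \ref{def:D^m(w)} and \ref{def:N^m(w)}, and your argument is exactly the expansion of that one-liner, correctly identifying the stripped $i=1$ factor as $[m]_q$ via $a_1=x$ from Lemma \ref{lem:Catxy}.
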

\begin{proof}
Follows from Definitions \ref{def:D^m(w)} and \ref{def:N^m(w)}. 
\end{proof}

\begin{definition}\label{def:N^m_n}\rm
For $m \in \mZ$ and $n \geq 1$, define 
\begin{equation*}
\nabla^{(m)}_n=\sum_{w \in \Cat_n}\nabla^{(m)}(w)w. 
\end{equation*}
\end{definition}

\noindent We have a comment on Definition \ref{def:N^m_n}. 

\begin{lemma}\label{lem:N^m_1}\rm
For $m \in \mZ$, 
\begin{equation*}
\nabla^{(m)}_1=xy. 
\end{equation*}
\end{lemma}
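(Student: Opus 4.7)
The plan is to unwind the definitions directly, since for $n=1$ the sum in Definition \ref{def:N^m_n} has only one term.

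First I would observe that by Lemma \ref{lem:Catxy}, any nontrivial Catalan word must start with $x$ and end with $y$. For length $2n = 2$, this forces the only Catalan word of length $2$ to be $xy$, so $\Cat_1 = \{xy\}$. Hence Definition \ref{def:N^m_n} gives
\begin{equation*}
\nabla^{(m)}_1 = \nabla^{(m)}(xy)\, xy.
\end{equation*}

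Next I would evaluate $\nabla^{(m)}(xy)$ from Definition \ref{def:N^m(w)}. With $a_1 = x$ and $a_2 = y$, the product runs only over $i = 2$, so
\begin{equation*}
\nabla^{(m)}(xy) = [\overline{a}_1 + m(\overline{a}_2 + 1)/2]_q = [1 + m \cdot 0]_q = [1]_q = 1,
\end{equation*}
where I used $\overline{x} = 1$, $\overline{y} = -1$, and hence $(\overline{y}+1)/2 = 0$ by \eqref{eq:xy01}, together with $[1]_q = 1$ from \eqref{eq:qint}. Substituting back yields $\nabla^{(m)}_1 = xy$, independent of $m$.

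There is no real obstacle here; the lemma is a direct unpacking of the definitions, and the only mild point is noting that the factor indexed by $i = 1$ (which would have involved $m$) is absent in $\nabla^{(m)}$ as compared to $\Delta^{(m)}$.
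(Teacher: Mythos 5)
Your proof is correct and matches the paper's approach, which simply cites Definitions \ref{def:N^m(w)} and \ref{def:N^m_n}; you have just written out the one-term computation explicitly, including the correct observation that $\Cat_1=\{xy\}$ and that the $i=1$ factor is absent so no dependence on $m$ survives.
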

\begin{proof}
Follows from Definitions \ref{def:N^m(w)} and \ref{def:N^m_n}. 
\end{proof}

\noindent In the following lemma we compare $\nabla^{(m)}_n$ and $\Delta^{(m)}_n$. 

\begin{lemma}\label{lem:D&N}\rm
For $m \in \mZ$ and $n \geq 1$, 
\begin{equation*}
\Delta^{(m)}_n=[m]_q\nabla^{(m)}_n. 
\end{equation*}
\end{lemma}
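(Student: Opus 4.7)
The plan is to reduce the claim to the word-level identity already established in Lemma \ref{lem:D&N(w)}, and then sum over the finite set $\Cat_n$.

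First I would note that the hypothesis $n \geq 1$ is essential: it guarantees that every $w \in \Cat_n$ is nontrivial, so that $\nabla^{(m)}(w)$ is defined (recall the remark after Definition \ref{def:N^m(w)}). Consequently, for each such $w$, Lemma \ref{lem:D&N(w)} applies and gives $\Delta^{(m)}(w) = [m]_q \nabla^{(m)}(w)$.

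Next I would start from the definition of $\Delta^{(m)}_n$ in Definition \ref{def:D^m_n}, substitute the word-level identity, and pull the scalar $[m]_q$ (which is independent of $w$) out of the sum:
\begin{equation*}
\Delta^{(m)}_n = \sum_{w \in \Cat_n} \Delta^{(m)}(w)\, w = \sum_{w \in \Cat_n} [m]_q \nabla^{(m)}(w)\, w = [m]_q \sum_{w \in \Cat_n} \nabla^{(m)}(w)\, w.
\end{equation*}
Applying Definition \ref{def:N^m_n} to the rightmost sum identifies it with $\nabla^{(m)}_n$, finishing the proof.

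There is no serious obstacle: the entire argument is bookkeeping once Lemma \ref{lem:D&N(w)} is in hand. The only subtlety worth flagging is the exclusion of $n=0$ from the statement, which is precisely what is needed to avoid the undefined quantity $\nabla^{(m)}(\m1)$. Note also that the identity is consistent in the degenerate case $m=0$, where both sides vanish by Lemma \ref{lem:D^0_n}.
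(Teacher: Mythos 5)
Your proposal is correct and matches the paper's own proof, which likewise derives the identity from Definitions \ref{def:D^m_n}, \ref{def:N^m_n} and the word-level Lemma \ref{lem:D&N(w)}; you have merely written out the bookkeeping explicitly. The remarks about why $n \geq 1$ is needed and the $m=0$ consistency check are accurate but not required.
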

\begin{proof}
Follows from Definitions \ref{def:D^m_n}, \ref{def:N^m_n} and Lemma \ref{lem:D&N(w)}. 
\end{proof}

\noindent We have seen that for $m \neq 0$, the elements $\Delta^{(m)}_n$ and $\nabla^{(m)}_n$ agree up to a nonzero scalar factor. Let us examine the case $m=0$. The elements $\Delta^{(0)}_n$ are given in Lemma \ref{lem:D^0_n}. Next we display $\nabla^{(0)}_n$. 

\begin{lemma}\label{lem:N^0_n}\rm
For $n \geq 1$, 
\begin{equation*}
\nabla^{(0)}_n=xC_{n-1}y. 
\end{equation*}
\end{lemma}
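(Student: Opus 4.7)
The plan is to expand $\nabla^{(0)}(w)$ directly from Definition \ref{def:N^m(w)}, determine which $w \in \Cat_n$ contribute a nonzero term, match those words bijectively with words of the form $xvy$ for $v \in \Cat_{n-1}$, and then verify that the scalar coefficients agree with those in $C_{n-1}$ from Definition \ref{def:Cn}.

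First I would observe that for a Catalan word $w=a_1a_2\cdots a_{2n}$ with elevation sequence $(e_0,e_1,\ldots,e_{2n})$, Definition \ref{def:N^m(w)} with $m=0$ simplifies to
\begin{equation*}
\nabla^{(0)}(w)=\prod_{i=2}^{2n}[\overline{a}_1+\overline{a}_2+\cdots+\overline{a}_{i-1}]_q=\prod_{j=1}^{2n-1}[e_j]_q.
\end{equation*}
Since $[0]_q=0$ and $q$ is not a root of unity, this product is nonzero if and only if $e_j\geq 1$ for all $1\leq j\leq 2n-1$.

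Next, I would characterize the set $S_n\subseteq\Cat_n$ of such $w$. By Lemma \ref{lem:Catxy} every nontrivial Catalan word begins with $x$ and ends with $y$, so $e_1=1$ and $e_{2n-1}=1$ automatically. For $w\in S_n$, write $w=xvy$ where $v=a_2a_3\cdots a_{2n-1}$ is a word of length $2(n-1)$ with weight partial sums $f_i=\overline{a}_2+\cdots+\overline{a}_{i+1}=e_{i+1}-1$ for $0\leq i\leq 2n-2$. The condition $e_j\geq 1$ for $1\leq j\leq 2n-1$ translates to $f_i\geq 0$ for $0\leq i\leq 2n-2$, together with $f_{2n-2}=e_{2n-1}-1=0$. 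By Lemma \ref{lem:eleCat} this is exactly the condition that $v\in\Cat_{n-1}$. Conversely, for any $v\in\Cat_{n-1}$, the word $xvy$ lies in $S_n$. This gives a bijection $S_n\leftrightarrow\Cat_{n-1}$ via $w=xvy\leftrightarrow v$.

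Finally I would match coefficients. For $w=xvy\in S_n$ with $v=b_1b_2\cdots b_{2n-2}\in\Cat_{n-1}$, the elevation entry $e_j$ for $2\leq j\leq 2n-1$ equals $1+\overline{b}_1+\cdots+\overline{b}_{j-1}$, and $e_1=1$. Therefore
\begin{equation*}
\nabla^{(0)}(w)=\prod_{j=1}^{2n-1}[e_j]_q=[1]_q\prod_{j=2}^{2n-1}[e_j]_q=\prod_{i=1}^{2n-2}[1+\overline{b}_1+\cdots+\overline{b}_i]_q,
\end{equation*}
which is precisely the coefficient of $v$ in $C_{n-1}$ by Definition \ref{def:Cn}. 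Summing over $S_n$ and using the free-product structure $w=xvy$ then yields $\nabla^{(0)}_n=x\,C_{n-1}\,y$, as claimed. The only part requiring care is the index-shift bookkeeping between the elevation sequence of $w$ and that of $v$, but once that bijection is set up the identity is immediate.
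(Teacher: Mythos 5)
Your proposal is correct and follows essentially the same route as the paper's proof: factor out $w=xvy$, observe that $\nabla^{(0)}(w)\neq 0$ exactly when $v$ is Catalan, set up the bijection with $\Cat_{n-1}$, and match the resulting coefficient with that of $v$ in $C_{n-1}$. Your version merely phrases the nonvanishing condition in terms of the elevation sequence of $w$ rather than the partial sums of $v$ directly, which is an equivalent bookkeeping choice.
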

\begin{proof}
For a word $w \in \Cat_n$, by Lemma \ref{lem:Catxy} we can write $w=xvy$ where $v$ is a word of length $2n-2$. Write $v=a_1a_2 \cdots a_{2n-2}$. 

\medskip
\noindent By Definition \ref{def:N^m(w)}, 
\begin{equation}\label{eq:N^0(w)}
\nabla^{(0)}(w)=\prod_{i=1}^{2n-2}[1+\overline{a}_1+\overline{a}_2+\cdots+\overline{a}_i]_q. 
\end{equation}

\noindent Moreover, $\nabla^{(0)}(w)=0$ if and only if there exists an integer $i$ ($1 \leq i \leq 2n-2$) such that $\overline{a}_1+\overline{a}_2+\cdots+\overline{a}_i=-1$ if and only if $v$ is not Catalan. 

\medskip
\noindent By the above comment, the map 
\begin{align*}
\Cat_{n-1} & \to \{w \in \Cat_n \mid \nabla^{(0)}(w) \neq 0\}\\
v &\mapsto xvy
\end{align*}
is a bijection. 

\medskip
\noindent Therefore, by Definition \ref{def:N^m_n} we have 
\begin{align*}
\nabla^{(0)}_n&=\sum_{w \in \Cat_n}\nabla^{(0)}(w)w\\
&=\sum_{v \in \Cat_{n-1}}\nabla^{(0)}(xvy)xvy\\
&=xC_{n-1}y, 
\end{align*}
where the last step follows by \eqref{eq:N^0(w)} and Definition \ref{def:Cn}. 
\end{proof}

\section{Some properties of $\nabla^{(m)}(w)$}
Let $m \in \mZ$ and let $w$ denote a nontrivial Catalan word. In this section, we express $\nabla^{(m)}(w)$ in terms of the profile of $w$. 

\begin{lemma}\label{lem:N^m(w)}\rm
Let $m \in \mZ$. For $n \geq 1$ and a Catalan word $w=a_1a_2 \cdots a_{2n}$, 
\begin{equation}\label{eq:N^m(w)xy}
\nabla^{(m)}(w)=\left(\prod_{\substack{2 \leq i \leq 2n\\a_i=x}}[\overline{a}_1+\overline{a}_2+\cdots+\overline{a}_{i-1}+m]_q\right)\left(\prod_{\substack{2 \leq i \leq 2n\\a_i=y}}[\overline{a}_1+\overline{a}_2+\cdots+\overline{a}_{i-1}]_q\right). 
\end{equation}
\end{lemma}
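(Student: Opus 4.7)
The plan is to mimic the proof of Lemma \ref{lem:D^m(w)}, the only difference being that the product now begins at $i=2$ rather than $i=1$. First I would start from Definition \ref{def:N^m(w)}, which expresses $\nabla^{(m)}(w)$ as the product over $2 \leq i \leq 2n$ of the factors $[\overline{a}_1+\overline{a}_2+\cdots+\overline{a}_{i-1}+m(\overline{a}_i+1)/2]_q$, and then split this product according to whether $a_i=x$ or $a_i=y$. The key ingredient is equation \eqref{eq:xy01}, which tells us that $m(\overline{a}_i+1)/2$ equals $m$ when $a_i=x$ and equals $0$ when $a_i=y$. Substituting these two values into the $i$-th factor immediately converts the sub-product over the $x$-indices into $\prod_{a_i=x}[\overline{a}_1+\cdots+\overline{a}_{i-1}+m]_q$ and the sub-product over the $y$-indices into $\prod_{a_i=y}[\overline{a}_1+\cdots+\overline{a}_{i-1}]_q$, which is the desired identity \eqref{eq:N^m(w)xy}.

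There is no substantive obstacle; the argument is a routine rewriting identical in spirit to the one already carried out in Lemma \ref{lem:D^m(w)}. One could alternatively derive the result by combining Lemma \ref{lem:D^m(w)} with Lemma \ref{lem:D&N(w)} and dividing out the $i=1$ factor, which for a nontrivial Catalan word (with $a_1=x$ by Lemma \ref{lem:Catxy}) equals $[m]_q$; however this route fails when $m=0$ because $[0]_q=0$, whereas the direct case analysis above treats all $m \in \mZ$ uniformly. For this reason I would write up the direct approach.
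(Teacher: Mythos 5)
Your proposal is correct and matches the paper's own proof, which likewise derives \eqref{eq:N^m(w)xy} directly from Definition \ref{def:N^m(w)} together with \eqref{eq:xy01} by splitting the product over the indices with $a_i=x$ and $a_i=y$. Your remark that the alternative route via Lemmas \ref{lem:D^m(w)} and \ref{lem:D&N(w)} breaks down at $m=0$ is a correct and worthwhile observation, and the direct argument you chose is the right one.
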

\begin{proof}
Follows from Definition \ref{def:N^m(w)} and \eqref{eq:xy01}. 
\end{proof}

\noindent Observe that in \eqref{eq:N^m(w)xy}, the first product depends on $m$ and the second product does not depend on $m$. The following definition is motivated by this observation. 

\begin{definition}\label{def:N^m(w)xy}\rm
Let $m \in \mZ$. For $n \geq 1$ and a Catalan word $w=a_1a_2 \cdots a_{2n}$, define 
\begin{equation*}
\nabla^{(m)}_x(w)=\prod_{\substack{2 \leq i \leq 2n \\ a_i=x}}[\overline{a}_1+\overline{a}_2+\cdots+\overline{a}_{i-1}+m]_q, 
\end{equation*}
\begin{equation*}
\nabla_y(w)=\prod_{\substack{2 \leq i \leq 2n \\ a_i=y}}[\overline{a}_1+\overline{a}_2+\cdots+\overline{a}_{i-1}]_q. 
\end{equation*}
\end{definition}

\begin{lemma}\label{lem:N=NxNy}\rm
For $m \in \mZ$ and a nontrivial Catalan word $w$, 
\begin{equation*}
\nabla^{(m)}(w)=\nabla^{(m)}_x(w)\nabla_y(w). 
\end{equation*}
\end{lemma}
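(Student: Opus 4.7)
The plan is to observe that this lemma is an immediate bookkeeping consequence of the formula established in Lemma \ref{lem:N^m(w)} together with the definitions of $\nabla^{(m)}_x(w)$ and $\nabla_y(w)$ in Definition \ref{def:N^m(w)xy}.

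More precisely, I would start from the right-hand side of \eqref{eq:N^m(w)xy} in Lemma \ref{lem:N^m(w)}, which expresses $\nabla^{(m)}(w)$ as a product of two factors: one indexed by the positions $i$ with $2 \leq i \leq 2n$ and $a_i = x$, involving $[\overline{a}_1 + \cdots + \overline{a}_{i-1} + m]_q$, and another indexed by the positions $i$ with $2 \leq i \leq 2n$ and $a_i = y$, involving $[\overline{a}_1 + \cdots + \overline{a}_{i-1}]_q$. These two factors are, by inspection, exactly the quantities that Definition \ref{def:N^m(w)xy} names $\nabla^{(m)}_x(w)$ and $\nabla_y(w)$. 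Substituting these names into \eqref{eq:N^m(w)xy} yields the claimed factorization $\nabla^{(m)}(w) = \nabla^{(m)}_x(w)\nabla_y(w)$.

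There is no real obstacle here; the statement is a definitional repackaging. The only thing to be slightly careful about is the range of the index, namely that the products run over $2 \leq i \leq 2n$ (not $1 \leq i \leq 2n$), which is consistent between Lemma \ref{lem:N^m(w)} and Definition \ref{def:N^m(w)xy}, and which is correct because by Lemma \ref{lem:Catxy} we have $a_1 = x$, so the factor at $i=1$ would have been $[m(\overline{a}_1+1)/2]_q = [m]_q$ and is deliberately excluded from $\nabla^{(m)}(w)$ as noted in the passage following Definition \ref{def:N^m(w)}. Thus the proof is a one-line citation of Lemma \ref{lem:N^m(w)} and Definition \ref{def:N^m(w)xy}.
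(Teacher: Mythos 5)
Your proof is correct and matches the paper's own argument, which likewise derives the factorization directly from Lemma \ref{lem:N^m(w)} and Definition \ref{def:N^m(w)xy}. Your added remark about the index range $2 \leq i \leq 2n$ and the excluded $i=1$ factor $[m]_q$ is accurate and consistent with Definition \ref{def:N^m(w)} and Lemma \ref{lem:D&N(w)}.
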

\begin{proof}
Follows from Lemma \ref{lem:N^m(w)} and Definition \ref{def:N^m(w)xy}. 
\end{proof}

\begin{lemma}\label{lem:Ix=Iy}\rm
Let $k \in \mZ$ and let $n \geq 1$. For a balanced word $a_1a_2 \cdots a_{2n}$, the following sets have the same cardinality: 
\begin{equation}\label{eq:Ix}
\{1 \leq i \leq 2n \mid a_i=x \text{ and } \overline{a}_1+\overline{a}_2+\cdots+\overline{a}_i=k\}, 
\end{equation}
\begin{equation}\label{eq:Iy}
\{1 \leq i \leq 2n \mid a_i=y \text{ and } \overline{a}_1+\overline{a}_2+\cdots+\overline{a}_{i-1}=k\}. 
\end{equation}
\end{lemma}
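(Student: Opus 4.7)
The plan is to rephrase both sets in terms of the elevation sequence $(e_0,e_1,\ldots,e_{2n})$ of the word $a_1a_2\cdots a_{2n}$, as defined in Definition \ref{def:elevation_sequence}. Recall $e_i=e_{i-1}+\overline{a}_i$, and $\overline{a}_i=1$ exactly when $a_i=x$ while $\overline{a}_i=-1$ exactly when $a_i=y$. Therefore the condition ``$a_i=x$ and $e_i=k$'' is equivalent to ``$e_{i-1}=k-1$ and $e_i=k$'', and the condition ``$a_i=y$ and $e_{i-1}=k$'' is equivalent to ``$e_{i-1}=k$ and $e_i=k-1$''. Thus the set \eqref{eq:Ix} counts the number of up-steps of the Dyck path from height $k-1$ to height $k$, and the set \eqref{eq:Iy} counts the number of down-steps of the Dyck path from height $k$ to height $k-1$. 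It suffices to show these two counts are equal.

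For this, I would use a level-crossing telescoping argument. For $0\le j\le 2n$, set
\begin{equation*}
N_j=\begin{cases} 1, & \text{if } e_j\ge k; \\ 0, & \text{if } e_j\le k-1. \end{cases}
\end{equation*}
Because each step satisfies $e_i-e_{i-1}\in\{+1,-1\}$, the difference $N_i-N_{i-1}$ equals $+1$ precisely when step $i$ is an up-step from $k-1$ to $k$, equals $-1$ precisely when step $i$ is a down-step from $k$ to $k-1$, and equals $0$ otherwise. Writing $U$ and $D$ for the cardinalities of \eqref{eq:Ix} and \eqref{eq:Iy} respectively, we obtain the telescoping identity
\begin{equation*}
U-D=\sum_{i=1}^{2n}(N_i-N_{i-1})=N_{2n}-N_0.
\end{equation*}

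By Lemma \ref{lem:balanced}, the balanced assumption on $a_1a_2\cdots a_{2n}$ gives $e_{2n}=\overline{a}_1+\overline{a}_2+\cdots+\overline{a}_{2n}=0$, and of course $e_0=0$. Hence $N_{2n}=N_0$, so $U=D$, as required.

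There is no real obstacle here; the only subtlety is the bookkeeping that both endpoints of the elevation sequence lie on the same side of the threshold level (because they are both equal to $0$), which is exactly what the balanced hypothesis provides. The indicator $N_j$ is a clean device that absorbs the case analysis on whether $k\le 0$ or $k\ge 1$ uniformly.
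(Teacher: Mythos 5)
Your proof is correct and takes essentially the same route as the paper: both identify \eqref{eq:Ix} with the up-steps of the Dyck path crossing from elevation $k-1$ to $k$ and \eqref{eq:Iy} with the down-steps crossing back, then conclude from the path starting and ending at elevation $0$. Your indicator-function telescoping merely makes explicit the final ``the result follows'' step that the paper leaves informal.
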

\begin{proof}
We consider the Dyck path of the word $a_1a_2 \cdots a_{2n}$. We interpret \eqref{eq:Ix} as the set of edges in the Dyck path that rise from elevation $k-1$ to elevation $k$. Similarly, we interpret \eqref{eq:Iy} as the set of edges in the Dyck path that fall from elevation $k$ to elevation $k-1$. Recall that the Dyck path of the balanced word $a_1a_2 \cdots a_{2n}$ starts and ends at elevation $0$. The result follows. 
\end{proof}

\begin{lemma}\label{lem:Nx=Ny}\rm
For a nontrivial Catalan word $w$, 
\begin{equation*}
\nabla_y(w)=\nabla^{(1)}_x(w). 
\end{equation*}
\end{lemma}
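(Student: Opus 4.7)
The plan is to reinterpret both products as products of $[k]_q$ over the edges of the Dyck path, grouped by the maximum elevation attained on each edge, and then invoke Lemma \ref{lem:Ix=Iy}. Write $w = a_1 a_2 \cdots a_{2n}$ with elevations $e_i = \overline{a}_1 + \cdots + \overline{a}_i$. For any index $i$ with $a_i = x$ we have $e_{i-1} + 1 = e_i$, so the factor of $\nabla^{(1)}_x(w)$ at such $i$ is $[e_{i-1}+1]_q = [e_i]_q$; for any index $i$ with $a_i = y$, the factor of $\nabla_y(w)$ at such $i$ is $[e_{i-1}]_q$. Grouping by the integer value $k \geq 1$, I would write
\begin{equation*}
\nabla^{(1)}_x(w) = \prod_{k \geq 1}[k]_q^{c_x(k)}, \qquad \nabla_y(w) = \prod_{k \geq 1}[k]_q^{c_y(k)},
\end{equation*}
where $c_x(k) = |\{\,2 \leq i \leq 2n : a_i = x,\ e_i = k\,\}|$ and $c_y(k) = |\{\,2 \leq i \leq 2n : a_i = y,\ e_{i-1} = k\,\}|$.

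The next step is to compare $c_x(k)$ and $c_y(k)$. Lemma \ref{lem:Ix=Iy}, applied to the full range $1 \leq i \leq 2n$, states that for each $k$ the number of $x$-indices with $e_i = k$ equals the number of $y$-indices with $e_{i-1} = k$. Since $w$ is a nontrivial Catalan word, Lemma \ref{lem:Catxy} forces $a_1 = x$, which raises the elevation from $0$ to $1$. Hence restricting to $i \geq 2$ removes exactly one $x$-index (at $i=1$, with $e_1=1$) and removes no $y$-index. This yields $c_y(k) = c_x(k)$ for all $k \geq 2$, and $c_y(1) = c_x(1) + 1$.

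Finally, because $[1]_q = 1$, the single discrepancy at $k=1$ contributes a factor of $1$, and the two products agree. The only subtlety is the careful bookkeeping at $i=1$, which is the sole reason the lower index in Definition \ref{def:N^m(w)xy} starts at $2$ rather than at $1$; once this boundary effect is absorbed into the triviality of $[1]_q$, the equality $\nabla_y(w) = \nabla^{(1)}_x(w)$ follows immediately from Lemma \ref{lem:Ix=Iy}.
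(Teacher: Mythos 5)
Your proposal is correct and follows essentially the same route as the paper: both rewrite the $x$-factors as $[e_i]_q$, invoke Lemma \ref{lem:Ix=Iy} to match multiplicities of each value $k$, and absorb the boundary index $i=1$ (where $a_1=x$ and $e_1=1$) into the triviality of $[1]_q=1$. The only cosmetic difference is that the paper extends both products to the range $1\leq i\leq 2n$ before comparing, whereas you keep the range at $i\geq 2$ and track the single exponent discrepancy at $k=1$ explicitly.
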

\begin{proof}
Write $w=a_1a_2 \cdots a_{2n}$ with $n \geq 1$. 

\medskip
\noindent By Definition \ref{def:N^m(w)xy} with $m=1$, we have 
\begin{equation*}
\nabla^{(1)}_x(w)=\prod_{\substack{2 \leq i \leq 2n \\ a_i=x}}[\overline{a}_1+\overline{a}_2+\cdots+\overline{a}_{i}]_q, 
\end{equation*}
\begin{equation*}
\nabla_y(w)=\prod_{\substack{2 \leq i \leq 2n \\ a_i=y}}[\overline{a}_1+\overline{a}_2+\cdots+\overline{a}_{i-1}]_q. 
\end{equation*}

\noindent Since $a_1=x$ and $\overline{x}=1$, we can write 
\begin{equation}\label{eq:Nx}
\nabla^{(1)}_x(w)=\prod_{\substack{1 \leq i \leq 2n \\ a_i=x}}[\overline{a}_1+\overline{a}_2+\cdots+\overline{a}_{i}]_q, 
\end{equation}
\begin{equation}\label{eq:Ny}
\nabla_y(w)=\prod_{\substack{1 \leq i \leq 2n \\ a_i=y}}[\overline{a}_1+\overline{a}_2+\cdots+\overline{a}_{i-1}]_q. 
\end{equation}

\noindent Comparing \eqref{eq:Nx} and \eqref{eq:Ny} using Lemma \ref{lem:Ix=Iy}, we obtain the result. 
\end{proof}

\begin{corollary}\label{cor:N^m(w)xx}\rm
For a nontrivial Catalan word $w$, 
\begin{equation*}
\nabla^{(m)}(w)=\nabla^{(m)}_x(w)\nabla^{(1)}_x(w). 
\end{equation*}
\end{corollary}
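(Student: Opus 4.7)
The plan is to combine the two preceding lemmas, which essentially do all the work. Starting from Lemma \ref{lem:N=NxNy}, we have the factorization
\begin{equation*}
\nabla^{(m)}(w)=\nabla^{(m)}_x(w)\,\nabla_y(w)
\end{equation*}
for any nontrivial Catalan word $w$. The right-hand side already isolates the $m$-dependence into the factor $\nabla^{(m)}_x(w)$, while $\nabla_y(w)$ is independent of $m$.

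Next I would invoke Lemma \ref{lem:Nx=Ny}, which identifies the $m$-independent factor $\nabla_y(w)$ with the specialization $\nabla^{(1)}_x(w)$ of the other factor at $m=1$. Substituting this identification into the display from Lemma \ref{lem:N=NxNy} immediately gives
\begin{equation*}
\nabla^{(m)}(w)=\nabla^{(m)}_x(w)\,\nabla^{(1)}_x(w),
\end{equation*}
which is exactly the statement of the corollary.

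There is no real obstacle here; the corollary is a formal consequence of Lemmas \ref{lem:N=NxNy} and \ref{lem:Nx=Ny}, and the proof is two lines of substitution. The conceptual content sits entirely in Lemma \ref{lem:Nx=Ny}, whose proof relied on the Dyck path bijection of Lemma \ref{lem:Ix=Iy} that matches up-steps and down-steps at each elevation. The point of the corollary is packaging: it rewrites $\nabla^{(m)}(w)$ as a product of two factors of the same shape, differing only in the shift parameter, which should be convenient for later manipulations involving the profile of $w$.
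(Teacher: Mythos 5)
Your proof is correct and matches the paper's own argument exactly: the paper also derives the corollary by substituting Lemma \ref{lem:Nx=Ny} into the factorization of Lemma \ref{lem:N=NxNy}. Nothing further is needed.
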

\begin{proof}
Follows from Lemmas \ref{lem:N=NxNy} and \ref{lem:Nx=Ny}. 
\end{proof}

\begin{corollary}\label{cor:N^m(w)}\rm
Let $m \in \mZ$. For $n \geq 1$ and a Catalan word $w=a_1a_2 \cdots a_{2n}$, 
\begin{equation*}
\nabla^{(m)}(w)=\prod_{\substack{2 \leq i \leq 2n \\ a_i=x}}\Big([\overline{a}_1+\overline{a}_2+\cdots+\overline{a}_{i-1}+m]_q[\overline{a}_1+\overline{a}_2+\cdots+\overline{a}_{i-1}+1]_q\Big). 
\end{equation*}
\end{corollary}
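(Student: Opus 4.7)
The plan is to obtain the identity by directly combining Corollary \ref{cor:N^m(w)xx} with Definition \ref{def:N^m(w)xy}. Corollary \ref{cor:N^m(w)xx} already tells us
\begin{equation*}
\nabla^{(m)}(w) = \nabla^{(m)}_x(w)\,\nabla^{(1)}_x(w),
\end{equation*}
so the content of the corollary is essentially just to expand these two factors using their definitions and then merge the two products over the same index set.

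More precisely, I would apply Definition \ref{def:N^m(w)xy} twice, once with parameter $m$ and once with parameter $1$. This gives
\begin{equation*}
\nabla^{(m)}_x(w) = \prod_{\substack{2 \leq i \leq 2n \\ a_i = x}}[\overline{a}_1+\overline{a}_2+\cdots+\overline{a}_{i-1}+m]_q, \qquad \nabla^{(1)}_x(w) = \prod_{\substack{2 \leq i \leq 2n \\ a_i = x}}[\overline{a}_1+\overline{a}_2+\cdots+\overline{a}_{i-1}+1]_q.
\end{equation*}
Since both products are indexed by the same set $\{i : 2 \leq i \leq 2n,\ a_i = x\}$, multiplying them factor by factor produces exactly the right-hand side of the claimed formula.

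There is no real obstacle here — the result is a one-line consequence of assembling Corollary \ref{cor:N^m(w)xx} and Definition \ref{def:N^m(w)xy}. The only trivial check is that the exponent in the second factor really is $[\,\cdot + 1]_q$, which is what Definition \ref{def:N^m(w)xy} yields when $m=1$, so nothing further needs to be verified.
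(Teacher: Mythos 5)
Your proposal is correct and follows exactly the paper's own route: the paper also deduces the corollary directly from Corollary \ref{cor:N^m(w)xx} together with Definition \ref{def:N^m(w)xy} applied with parameters $m$ and $1$. Nothing is missing.
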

\begin{proof}
Follows from Definition \ref{def:N^m(w)xy} and Corollary \ref{cor:N^m(w)xx}. 
\end{proof}

\noindent The following definition is for notational convenience. 

\begin{definition}\label{def:qpermute}\rm
For $m,n \in \mZ$, define 
\begin{equation*}
\qpermute{m}{n}=
\begin{cases}
[m]_q[m-1]_q \cdots [m-n+1]_q, &\hspace{1em}\text{ if }n \geq 1; \\
1, &\hspace{1em}\text{ if }n=0; \\
0, &\hspace{1em}\text{ if }n \leq -1. 
\end{cases}
\end{equation*}
\end{definition}

\begin{lemma}\label{lem:N^m(w)profile}\rm
For $m \in \mZ$ and a nontrivial Catalan word $w$ with profile $(l_0,h_1,l_1,\ldots,h_r,l_r)$, 
\begin{align*}
&\nabla^{(m)}(w)\\
&=\qpermute{h_1+m-1}{h_1-l_0-1}\qpermute{h_1}{h_1-l_0-1}\qpermute{h_2+m-1}{h_2-l_1}\qpermute{h_2}{h_2-l_1} \cdots \qpermute{h_r+m-1}{h_r-l_{r-1}}\qpermute{h_r}{h_r-l_{r-1}}. 
\end{align*}
\end{lemma}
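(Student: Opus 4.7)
The plan is to reduce everything to Corollary \ref{cor:N^m(w)} and then group the factors according to the profile. Write $w = a_1 a_2 \cdots a_{2n}$ and let $(e_0, e_1, \ldots, e_{2n})$ be its elevation sequence. By Corollary \ref{cor:N^m(w)},
\begin{equation*}
\nabla^{(m)}(w) = \prod_{\substack{2 \leq i \leq 2n \\ a_i = x}} [e_{i-1}+m]_q [e_{i-1}+1]_q.
\end{equation*}
So I need only identify the indices $i$ with $a_i = x$ and read off the values $e_{i-1}$ in terms of the profile.

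Next I would use the geometric interpretation given by the Dyck path. The profile $(l_0, h_1, l_1, \ldots, h_r, l_r)$ breaks the path into $2r$ monotonic segments: an ascent from $l_{k-1}$ to $h_k$ (all $x$'s) followed by a descent from $h_k$ to $l_k$ (all $y$'s), for $k = 1, 2, \ldots, r$. The positions $i$ with $a_i = x$ therefore partition according to these $r$ ascents. For the $k$-th ascent, the $x$'s have preceding elevations $e_{i-1} = l_{k-1}, l_{k-1}+1, \ldots, h_k - 1$, giving in total $h_k - l_{k-1}$ positions. The only subtlety is that the first ascent begins at $i = 1$ (since $a_1 = x$ by Lemma \ref{lem:Catxy}), which is excluded from the product in Corollary \ref{cor:N^m(w)}. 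Hence for $k = 1$ only the values $e_{i-1} = 1, 2, \ldots, h_1 - 1$ contribute, i.e., $h_1 - l_0 - 1$ factors; for $k \geq 2$ all $h_k - l_{k-1}$ factors contribute.

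Then I would just match notation. The ascent indexed by $k \geq 2$ contributes
\begin{equation*}
\prod_{j = l_{k-1}}^{h_k - 1} [j + m]_q [j + 1]_q = \qpermute{h_k + m - 1}{h_k - l_{k-1}} \qpermute{h_k}{h_k - l_{k-1}},
\end{equation*}
where the first product is $[l_{k-1} + m]_q [l_{k-1}+m+1]_q \cdots [h_k + m - 1]_q$ read top-down and the second is $[l_{k-1}+1]_q \cdots [h_k]_q$ read top-down; both match Definition \ref{def:qpermute}. For $k = 1$, the range is $j = 1, 2, \ldots, h_1 - 1$, giving $\qpermute{h_1 + m - 1}{h_1 - l_0 - 1} \qpermute{h_1}{h_1 - l_0 - 1}$ since $l_0 = 0$. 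Taking the product over $k = 1, 2, \ldots, r$ yields the claimed formula.

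I do not expect any serious obstacle. The main thing to be careful about is the off-by-one for the first ascent (excluding $i = 1$ from the product), which is what forces the asymmetric exponents $h_1 - l_0 - 1$ versus $h_k - l_{k-1}$ for $k \geq 2$ in the displayed formula. Everything else is bookkeeping with $q$-integers.
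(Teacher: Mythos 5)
Your proposal is correct and takes the same route as the paper: the paper's own proof is the one-liner ``Follows from Corollary \ref{cor:N^m(w)} and Definition \ref{def:qpermute},'' and your argument simply fills in the bookkeeping that this one-liner leaves implicit. The grouping of the $x$-positions by ascents of the Dyck path, the off-by-one for the first ascent caused by excluding $i=1$, and the matching with Definition \ref{def:qpermute} are all handled correctly.
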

\begin{proof}
Follows from Corollary \ref{cor:N^m(w)} and Definition \ref{def:qpermute}. 
\end{proof}

\noindent Motivated by the above lemma, we make a definition. 

\begin{definition}\label{def:N^m(w)profile}\rm
For $m \in \mZ$ and a sequence $(l_0,h_1,l_1,\ldots,h_r,l_r)$ of integers with $r \geq 1$, define 
\begin{align*}
&\nabla^{(m)}(l_0,h_1,l_1,\ldots,h_r,l_r)\\
&=\qpermute{h_1+m-1}{h_1-l_0-1}\qpermute{h_1}{h_1-l_0-1}\qpermute{h_2+m-1}{h_2-l_1}\qpermute{h_2}{h_2-l_1} \cdots \qpermute{h_r+m-1}{h_r-l_{r-1}}\qpermute{h_r}{h_r-l_{r-1}}. 
\end{align*}
\end{definition}

\noindent We list four identities for $[n]_q$ that will be useful. 

\begin{lemma}\label{lem:[n]_q}\rm
The following identities hold. 
\begin{enumerate}
\item For $a,b,c \in \mZ$, 
\begin{equation*}
[a+c]_q[b+c]_q-[a]_q[b]_q=[c]_q[a+b+c]_q. 
\end{equation*}
\item For $a,b,c \in \mZ$, 
\begin{equation*}
[a]_q[b-c]_q+[b]_q[c-a]_q+[c]_q[a-b]_q=0. 
\end{equation*}
\item For $a,b,c,d \in \mZ$, 
\begin{equation*}
[a]_q[b]_q[c-d]_q+[b]_q[c]_q[d-a]_q+[c]_q[d]_q[a-b]_q+[d]_q[a]_q[b-c]_q=0. 
\end{equation*}
\item For $a,b,c,d \in \mZ$, 
\begin{equation*}
[a]_q[b]_q[a-b]_q+[b]_q[c]_q[b-c]_q+[c]_q[d]_q[c-d]_q+[d]_q[a]_q[d-a]_q=[a-c]_q[b-d]_q[a+c-b-d]_q. 
\end{equation*}
\end{enumerate}
\end{lemma}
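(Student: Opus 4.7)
The plan is to prove all four identities by direct substitution of the definition $[n]_q = (q^n - q^{-n})/(q - q^{-1})$; each identity is a universal relation in the ring of Laurent polynomials in $q$, so no combinatorial input is needed. I would do them in the order (i), (ii), (iii), (iv), since (iii) is cleanest as a corollary of (ii).

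For (i), I would expand $(q^{a+c}-q^{-a-c})(q^{b+c}-q^{-b-c})$ and $(q^a-q^{-a})(q^b-q^{-b})$ separately. Each product has four monomials. The cross terms $-q^{a-b}$ and $-q^{b-a}$ cancel in the subtraction, leaving $q^{a+b+2c} - q^{a+b} + q^{-a-b-2c} - q^{-a-b}$, which factors as $(q^c - q^{-c})(q^{a+b+c} - q^{-a-b-c})$. Dividing by $(q-q^{-1})^2$ gives $[c]_q[a+b+c]_q$. For (ii), expand each of the three products as four signed monomials of the form $\pm q^{\pm a \pm b \pm c}$. A bookkeeping check shows that each of the six exponents in $\{\pm(a+b-c),\pm(a-b+c),\pm(-a+b+c)\}$ appears once with coefficient $+1$ and once with coefficient $-1$, so the sum vanishes.

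For (iii), I would avoid re-expansion by regrouping the left-hand side as
\begin{equation*}
[b]_q\bigl([a]_q[c-d]_q + [c]_q[d-a]_q\bigr) + [d]_q\bigl([c]_q[a-b]_q + [a]_q[b-c]_q\bigr).
\end{equation*}
Applying (ii) with letters renamed to $(a,c,d)$ gives $[a]_q[c-d]_q + [c]_q[d-a]_q = -[d]_q[a-c]_q = [d]_q[c-a]_q$, and applying (ii) with letters renamed to $(c,a,b)$ gives $[c]_q[a-b]_q + [a]_q[b-c]_q = -[b]_q[c-a]_q$. Substituting, the two remaining terms cancel.

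For (iv), the crucial auxiliary identity is
\begin{equation*}
[a]_q[b]_q[a-b]_q = \frac{[2a]_q - [2b]_q - [2a-2b]_q}{(q-q^{-1})^2},
\end{equation*}
obtained by multiplying out $(q^a-q^{-a})(q^b-q^{-b})(q^{a-b}-q^{b-a})$ and recognizing three pairs $q^X - q^{-X}$. Applying it to each of the four cyclic summands of the left-hand side causes $[2a]_q,[2b]_q,[2c]_q,[2d]_q$ to telescope, leaving
\begin{equation*}
\text{LHS} = \frac{[2a-2d]_q - [2a-2b]_q - [2b-2c]_q - [2c-2d]_q}{(q-q^{-1})^2},
\end{equation*}
where I used $[2d-2a]_q = -[2a-2d]_q$. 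A parallel direct expansion of $[a-c]_q[b-d]_q[a+c-b-d]_q$ produces eight monomials that group into exactly the same four pairs $q^X - q^{-X}$, yielding the same expression. I expect the delicate step to be the bookkeeping in this right-hand side expansion: the exponents $a+b-c-d$, $a-c-b+d$, $-a+c+b-d$, $-a-b+c+d$ from the first product, when multiplied by $\pm(a+c-b-d)$, must be reorganized so that each monomial pairs with its negative within a single $[\cdot]_q$ factor. Once the signs are checked carefully, the match with the left-hand side is immediate.
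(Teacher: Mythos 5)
Your proposal is correct: I checked the cancellation pattern in (i) and (ii), the reduction of (iii) to two instances of (ii), the auxiliary identity $[a]_q[b]_q[a-b]_q=\bigl([2a]_q-[2b]_q-[2a-2b]_q\bigr)/(q-q^{-1})^2$, and the eight-monomial expansion of $[a-c]_q[b-d]_q[a+c-b-d]_q$, which does telescope to $\bigl([2a-2d]_q-[2a-2b]_q-[2b-2c]_q-[2c-2d]_q\bigr)/(q-q^{-1})^2$, matching the left-hand side of (iv). The paper's proof is simply ``by direct computation,'' so your argument is the same approach, just carried out explicitly and with a helpful organization of the bookkeeping.
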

\begin{proof}
By direct computation. 
\end{proof}

\section{A recurrence relation for $\nabla^{(m)}_n$}
Let $m \in \mZ$ and $n \geq 1$. In this section, we obtain a recurrence relation that gives $\nabla^{(m)}_{n+1}$ in terms of $\nabla^{(m)}_n$. This will be achieved in Proposition \ref{prop:N^m_nrecursion}. 

\medskip
\noindent We begin with a useful formula about nontrivial Catalan words. To avoid a degenerate situation, we exclude the Catalan word $xy$. 

\begin{lemma}\label{lem:N^m(profile)}\rm
Consider a nontrivial Catalan word other than $xy$, and let $(l_0,h_1,l_1,\ldots,h_r,l_r)$ denote its profile. Then for $m \in \mZ$ we have 
\begin{align*}
&\nabla^{(m)}(l_0,h_1,l_1,\ldots,h_r,l_r)\\
&=\sum_{j=\xi}^{r-1}\nabla^{(m)}(l_0,h_1,l_1,\ldots,h_j,l_j,h_{j+1}-1,\ldots,h_r-1,l_r)\\
&\hspace{16em} \times \Big([h_{j+1}]_q[h_{j+1}+m-1]_q-[l_j]_q[l_j+m-1]_q\Big),
\end{align*}
where $\xi=\max\{j \mid 0 \leq j \leq r-1,l_j=0\}$. 
\end{lemma}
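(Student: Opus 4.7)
The plan is to prove the identity by telescoping. For $\xi \le j \le r$, set
\begin{equation*}
P_j = \nabla^{(m)}(l_0,h_1,l_1,\ldots,l_j,h_{j+1}-1,l_{j+1}-1,\ldots,l_{r-1}-1,h_r-1,l_r),
\end{equation*}
so that $P_r$ is the original $\nabla^{(m)}(l_0,h_1,l_1,\ldots,h_r,l_r)$, i.e.\ the left-hand side of the claim. Writing $\alpha_j = [h_{j+1}]_q[h_{j+1}+m-1]_q$ and $\beta_j = [l_j]_q[l_j+m-1]_q$, the claim becomes
\begin{equation*}
P_r = \sum_{j=\xi}^{r-1} P_j(\alpha_j-\beta_j).
\end{equation*}
By the choice of $\xi$, the valleys $l_{\xi+1},\ldots,l_{r-1}$ are all $\geq 1$, so all $P_j$ appearing in the sum are unambiguously defined via Definition \ref{def:N^m(w)profile}.

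First I would compute the ratios $P_j/P_{j-1}$ for $\xi+1 \le j \le r$ directly from Definition \ref{def:N^m(w)profile}. The profiles defining $P_{j-1}$ and $P_j$ differ only in that $h_j$ and (when $j \le r-1$) $l_j$ are each reduced by one in $P_{j-1}$. Hence only the $i=j$ factor of the product changes, together with the $i=j+1$ factor when $j \le r-1$. Using $\qpermute{a}{n+1} = \qpermute{a}{n}\,[a-n]_q$ and cancelling, a direct computation yields
\begin{equation*}
\frac{P_j}{P_{j-1}} = \frac{[h_j]_q[h_j+m-1]_q}{[l_j]_q[l_j+m-1]_q} \quad (\xi+1 \le j \le r-1), \qquad \frac{P_r}{P_{r-1}} = [h_r]_q[h_r+m-1]_q.
\end{equation*}
Equivalently, $P_j\alpha_j = P_{j+1}\beta_{j+1}$ for $\xi \le j \le r-2$, and $P_{r-1}\alpha_{r-1} = P_r$. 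The case $j=1$ needs a brief separate check because the first factor of Definition \ref{def:N^m(w)profile} uses $h_1-l_0-1$ rather than $h_1-l_0$, but the same cancellation produces the same ratio.

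With these recurrences the sum telescopes:
\begin{align*}
\sum_{j=\xi}^{r-1} P_j(\alpha_j-\beta_j)
&= \sum_{j=\xi}^{r-2} P_j\alpha_j + P_{r-1}\alpha_{r-1} - \sum_{j=\xi}^{r-1} P_j\beta_j\\
&= \sum_{j=\xi}^{r-2} P_{j+1}\beta_{j+1} + P_r - \sum_{j=\xi}^{r-1} P_j\beta_j\\
&= P_r - P_\xi\beta_\xi.
\end{align*}
Since $l_\xi=0$, we have $\beta_\xi = [0]_q[m-1]_q = 0$, yielding $P_r$ as required.

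The main obstacle is the initial computation of the ratios $P_j/P_{j-1}$: correctly isolating exactly which $\qpermute$ factors are affected, handling the asymmetric first factor of Definition \ref{def:N^m(w)profile} on the same footing as the others, and confirming that in the decremented profiles no unintended $[0]_q$ appears (which is what the condition $l_{\xi+1},\ldots,l_{r-1} \ge 1$ is for). Once these are established, the telescoping and the vanishing of the boundary term $P_\xi\beta_\xi$ are routine.
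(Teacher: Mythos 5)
Your proposal is correct and is essentially the paper's own argument: both rewrite each summand $P_j(\alpha_j-\beta_j)$ as a difference of two $\nabla^{(m)}$-values of explicit profiles via the $\qpermute{a}{n+1}=\qpermute{a}{n}[a-n]_q$ identity, telescope, and kill the boundary term using $[l_\xi]_q=0$; you simply make explicit the factor-level cancellations that the paper leaves implicit. One small presentational point: state the relations multiplicatively as $P_j\alpha_j=P_{j+1}\beta_{j+1}$ from the outset rather than as ratios $P_j/P_{j-1}$, since $[l_j+m-1]_q$ can vanish for suitable $m$, whereas the product identities (which are all your telescoping actually uses) hold unconditionally.
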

\begin{proof}
If $\xi=r-1$, the result follows by Definition \ref{def:N^m(w)profile} and the fact that $[l_\xi]_q=0$. 

\medskip
\noindent If $\xi<r-1$, by Definition \ref{def:N^m(w)profile} and the fact that $[l_\xi]_q=0$, the right-hand side of the above equation is equal to 
\begin{align*}
&\nabla^{(m)}(l_0,h_1,l_1,\ldots,h_\xi,l_\xi,h_{\xi+1},l_{\xi+1}-1,\ldots,h_r-1,l_r)\\
&+\sum_{j=\xi+1}^{r-2}\bigg(\nabla^{(m)}(l_0,h_1,l_1,\ldots,h_j,l_j,h_{j+1},l_{j+1}-1,\ldots,h_r-1,l_r)\\
&\hspace{8em}-\nabla^{(m)}(l_0,h_1,l_1,\ldots,h_j,l_j-1,\ldots,h_r-1,l_r)\bigg)\\
&+\bigg(\nabla^{(m)}(l_0,h_1,l_1,\ldots,h_r,l_r)-\nabla^{(m)}(l_0,h_1,l_1,\ldots,h_{r-1},l_{r-1}-1,h_r-1,l_r)\bigg). 
\end{align*}
The above sum is telescoping. Upon cancellation we obtain the result. 
\end{proof}

\noindent To see why the word $xy$ is excluded from Lemma \ref{lem:N^m(profile)}, we consider the following interpretation of this lemma. Let $m \in \mZ$. For $n \geq 1$ and a word $v \in \Cat_{n+1}$, Lemma \ref{lem:N^m(profile)} writes $\nabla^{(m)}(v)$ in terms of some $\nabla^{(m)}(w)$ where $w \in \Cat_n$. To include the word $xy$, we need to extend the result to $n=0$. But this is not possible since $\nabla^{(m)}(\m1)$ is not defined. 

\begin{lemma}\label{lem:innerprod0}\rm
Let $m \in \mZ$. For $n \geq 1$ and a Catalan word $w=a_1a_2 \cdots a_{2n}$, 
\begin{equation*}
\frac{q^m x \star w-q^{-m} w \star x}{q-q^{-1}}=\sum_{i=0}^{2n}a_1 \cdots a_ixa_{i+1} \cdots a_{2n}[m+2\overline{a}_1+2\overline{a}_2+\cdots+2\overline{a}_i]_q. 
\end{equation*}
\end{lemma}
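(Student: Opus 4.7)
The plan is to expand both $x \star w$ and $w \star x$ directly using the recursive definition of the $q$-shuffle product for a letter times a word, and then combine the two expansions using the definition \eqref{eq:qint} of $[n]_q$.

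First I would apply the formula
\[
x \star w = \sum_{i=0}^{2n} a_1 \cdots a_i\, x\, a_{i+1} \cdots a_{2n}\, q^{\langle x, a_1 \rangle + \cdots + \langle x, a_i \rangle}.
\]
Since $\langle x, a_j \rangle = 2$ if $a_j = x$ and $\langle x, a_j \rangle = -2$ if $a_j = y$, we have $\langle x, a_j \rangle = 2\overline{a}_j$, so the exponent becomes $2\overline{a}_1 + 2\overline{a}_2 + \cdots + 2\overline{a}_i$. Similarly,
\[
w \star x = \sum_{i=0}^{2n} a_1 \cdots a_i\, x\, a_{i+1} \cdots a_{2n}\, q^{\langle x, a_{2n} \rangle + \cdots + \langle x, a_{i+1} \rangle},
\]
and the exponent here is $2\overline{a}_{i+1} + \cdots + 2\overline{a}_{2n}$.

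Next I would exploit that $w$ is Catalan, hence balanced, so $\overline{a}_1 + \cdots + \overline{a}_{2n} = 0$ by Lemma \ref{lem:balanced}. This lets me rewrite the exponent appearing in $w \star x$ as
\[
2\overline{a}_{i+1} + \cdots + 2\overline{a}_{2n} = -\bigl(2\overline{a}_1 + \cdots + 2\overline{a}_i\bigr).
\]
So the two expansions are supported on the same basis of words $a_1 \cdots a_i\, x\, a_{i+1} \cdots a_{2n}$, with exponents that differ only by a sign. Forming $q^m x \star w - q^{-m} w \star x$, the coefficient of the $i^{\text{th}}$ such word becomes
\[
q^{m + 2\overline{a}_1 + \cdots + 2\overline{a}_i} - q^{-(m + 2\overline{a}_1 + \cdots + 2\overline{a}_i)}.
\]
Dividing by $q - q^{-1}$ and invoking \eqref{eq:qint} yields $[m + 2\overline{a}_1 + \cdots + 2\overline{a}_i]_q$, which matches the right-hand side of the claimed identity.

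There is really no main obstacle here; the only subtle point is that the balancedness of $w$ is used in an essential way to turn $2(\overline{a}_{i+1} + \cdots + \overline{a}_{2n})$ into $-2(\overline{a}_1 + \cdots + \overline{a}_i)$, so that the two sums combine termwise into a single $q$-integer via \eqref{eq:qint}. This is what makes $q^m$ and $q^{-m}$ (as opposed to some asymmetric pair) the right coefficients to produce a clean statement.
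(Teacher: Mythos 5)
Your proposal is correct and follows essentially the same route as the paper: expand $x \star w$ and $w \star x$ via the letter-times-word rule, observe $\langle x, a_j\rangle = 2\overline{a}_j$, use balancedness to flip the exponent in $w \star x$ to $-(2\overline{a}_1+\cdots+2\overline{a}_i)$, and combine termwise into $[m+2\overline{a}_1+\cdots+2\overline{a}_i]_q$ via \eqref{eq:qint}. Nothing is missing.
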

\begin{proof}
By the definition of the $q$-shuffle product in Section 2, we have 
\begin{align*}
&\frac{q^m x \star w-q^{-m} w \star x}{q-q^{-1}} \\
&\hspace{2em}=\sum_{i=0}^{2n} a_1 \cdots a_ixa_{i+1} \cdots a_{2n} \hspace{0.25em} \frac{q^{m+2\overline{a}_1+\cdots+2\overline{a}_i}-q^{-m+2\overline{a}_{i+1}+\cdots+2\overline{a}_{2n}}}{q-q^{-1}} \\
&\hspace{2em}=\sum_{i=0}^{2n} a_1 \cdots a_ixa_{i+1} \cdots a_{2n} \hspace{0.25em} \frac{q^{m+2\overline{a}_1+\cdots+2\overline{a}_i}-q^{-m-2\overline{a}_1-\cdots-2\overline{a}_i}}{q-q^{-1}} \\
&\hspace{2em}=\sum_{i=0}^{2n} a_1 \cdots a_ixa_{i+1} \cdots a_{2n}[m+2\overline{a}_1+2\overline{a}_2+\cdots+2\overline{a}_i]_q.
\end{align*}
\end{proof}

\noindent For notation convenience, we bring in a bilinear form on $\mV$. 
\begin{definition}\label{def:biform}\rm
(See \cite[Page 6]{ter_catalan}.) Let $(~,~):\mV \times \mV \to \mF$ denote the bilinear form given by $(w,w)=1$ for a word $w \in \mV$ and $(w,v)=0$ for distinct words $w,v \in \mV$. 
\end{definition}
\noindent One can rountinely check that $(~,~)$ is symmetric and nondegenerate. For a word $w \in \mV$ and any $u \in \mV$, the scalar $(w,u)$ is the coefficient of $w$ in $u$. 

\begin{lemma}\label{lem:innerprod}\rm
Let $m \in \mZ$ and let $n \geq 1$. For a word $v$ and a Catalan word $w=a_1a_2 \cdots a_{2n}$, consider the scalar 
\begin{equation}\label{eq:innerprod}
\left(\frac{(q^m x \star w-q^{-m} w \star x)y}{q-q^{-1}},v\right).
\end{equation}
\begin{enumerate}
\item If $v$ is Catalan and of length $2n+2$, then the scalar \eqref{eq:innerprod} is equal to 
	\begin{equation*}
	\sum_i[m+2\overline{a}_1+2\overline{a}_2+\cdots+2\overline{a}_i]_q, 
	\end{equation*}
	where the sum is over all $i$ $(0 \leq i \leq 2n)$ such that $v=a_1 \cdots a_ixa_{i+1} \cdots a_{2n}y$. 
\item If $v$ is not Catalan or is not of length $2n+2$, then the scalar \eqref{eq:innerprod} is equal to $0$. 
\end{enumerate}
\end{lemma}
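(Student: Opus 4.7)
The plan is to apply Lemma \ref{lem:innerprod0} to the numerator, right-multiply by $y$, and then read off the coefficient of $v$ using the bilinear form. Concretely, Lemma \ref{lem:innerprod0} gives
\begin{equation*}
\frac{(q^m x \star w - q^{-m} w \star x)y}{q - q^{-1}} = \sum_{i=0}^{2n} a_1 \cdots a_i x a_{i+1} \cdots a_{2n} y \cdot [m + 2\overline{a}_1 + \cdots + 2\overline{a}_i]_q.
\end{equation*}
Pairing with $v$ via Definition \ref{def:biform}, the scalar \eqref{eq:innerprod} equals the sum of $[m + 2\overline{a}_1 + \cdots + 2\overline{a}_i]_q$ over exactly those indices $i \in \{0, 1, \ldots, 2n\}$ for which $v = a_1 \cdots a_i x a_{i+1} \cdots a_{2n} y$. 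This immediately establishes (i), and also gives (ii) in the subcase where the length of $v$ differs from $2n+2$, since each summand on the right has length $2n+2$.

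It remains to handle (ii) when $v$ has length $2n+2$ but is not Catalan. I would prove the contrapositive via the stronger claim that for every Catalan word $w = a_1 \cdots a_{2n}$ and every $0 \leq i \leq 2n$, the inserted word $v_i := a_1 \cdots a_i x a_{i+1} \cdots a_{2n} y$ is itself Catalan. Granted this claim, if $v$ has length $2n+2$ but is not Catalan, then no $i$ satisfies $v = v_i$, and the coefficient vanishes.

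To verify the stronger claim, I would compute the elevation sequence $(e'_0, \ldots, e'_{2n+2})$ of $v_i$ in terms of the elevation sequence $(e_0, \ldots, e_{2n})$ of $w$: the initial segment satisfies $e'_k = e_k \geq 0$ for $0 \leq k \leq i$ since $w$ is Catalan; $e'_{i+1} = e_i + 1 \geq 1$; the middle segment satisfies $e'_k = e_{k-1} + 1 \geq 1$ for $i+2 \leq k \leq 2n+1$; and $e'_{2n+2} = e_{2n} + 1 - 1 = 0$ using $e_{2n} = 0$. All intermediate entries are nonnegative and the terminal entry is zero, so by Lemma \ref{lem:eleCat} the word $v_i$ is Catalan. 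The entire argument is routine bookkeeping with the elevation sequence; the only mild care needed is in the edge cases $i = 0$ and $i = 2n$, so there is no substantial obstacle.
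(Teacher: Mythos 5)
Your proposal is correct and follows essentially the same route as the paper, which simply derives the lemma from Lemma \ref{lem:innerprod0} by right-multiplying by $y$ and pairing with $v$. The only difference is that you explicitly verify (via elevation sequences) that every inserted word $a_1 \cdots a_i x a_{i+1} \cdots a_{2n} y$ is Catalan of length $2n+2$ --- a detail the paper leaves implicit but which is indeed needed for part (ii), and your verification of it is correct.
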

\begin{proof}
Follows from Lemma \ref{lem:innerprod0}. 
\end{proof}

\begin{lemma}\label{lem:N^m(v)decomp}\rm
Let $m \in \mZ$. For $n \geq 1$ and a word $v \in \Cat_{n+1}$, we have 
\begin{equation*}
\nabla^{(m)}(v)=\sum_{w \in \Cat_n}\nabla^{(m)}(w)\left(\frac{(q^m x \star w-q^{-m} w \star x)y}{q-q^{-1}},v\right). 
\end{equation*}
\end{lemma}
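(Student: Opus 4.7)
The plan is to expand the right-hand side via Lemma \ref{lem:innerprod}(i), reinterpret the resulting sum combinatorially in terms of the Dyck path of $v$, and match the outcome against the recurrence of Lemma \ref{lem:N^m(profile)}.

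Write $v = b_1b_2 \cdots b_{2n+2}$; since $v$ is Catalan, $b_{2n+2} = y$ by Lemma \ref{lem:Catxy}. Switching the order of summation and applying Lemma \ref{lem:innerprod}(i), the right-hand side becomes
\[
\sum_{k}\,\nabla^{(m)}(w_k)\,[m + 2e_{k-1}]_q,
\]
where $(e_0,e_1,\ldots,e_{2n+2})$ is the elevation sequence of $v$, and the sum runs over $k \in \{1,\ldots,2n+1\}$ with $b_k = x$ and $w_k := b_1 \cdots b_{k-1}b_{k+1}\cdots b_{2n+1} \in \Cat_n$ (this last restriction absorbs the original condition $w \in \Cat_n$). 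After removing $b_k$ and $b_{2n+2}$, the resulting elevation sequence is $(e_0,\ldots,e_{k-1},e_{k+1}-1,\ldots,e_{2n+1}-1)$, so $w_k \in \Cat_n$ iff $e_j \geq 1$ for $k+1 \leq j \leq 2n$, i.e.\ no intermediate $0$-valley of $v$ lies strictly to the right of position $k$.

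Writing the profile of $v$ as $(0,h_1,l_1,\ldots,h_r,0)$ and $\xi = \max\{j : 0 \leq j \leq r-1,\ l_j = 0\}$, the previous condition is equivalent to $k$ lying in the $(j+1)$-th rising leg for some $j \in \{\xi,\xi+1,\ldots,r-1\}$. Since each rising leg is a maximal run of $x$'s, $w_k$ depends only on $j$; call it $w_j$ and observe that its profile (in the formal sense of Definition \ref{def:N^m(w)profile}) is
\[
(0,h_1,l_1,\ldots,l_j,h_{j+1}-1,l_{j+1}-1,\ldots,h_r-1,0).
\]
As $k$ varies over the $h_{j+1}-l_j$ positions in the $(j+1)$-th rising leg, $e_{k-1}$ takes each value in $\{l_j,l_j+1,\ldots,h_{j+1}-1\}$ exactly once, and the telescoping identity
\[
\sum_{a=l_j}^{h_{j+1}-1}[m + 2a]_q = [h_{j+1}]_q[h_{j+1}+m-1]_q - [l_j]_q[l_j+m-1]_q,
\]
which follows from $[a+1]_q[a+m]_q - [a]_q[a+m-1]_q = [m+2a]_q$ (a special case of Lemma \ref{lem:[n]_q}(i) with $c = 1$), collapses the inner sum. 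Summing over $j$ and invoking Lemma \ref{lem:N^m(profile)} --- whose excluded case $v = xy$ is harmless, since $n \geq 1$ forces $v$ to have length $\geq 4$ --- yields $\nabla^{(m)}(v)$, establishing the identity.

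The main obstacle is the final reconciliation: $\nabla^{(m)}(w_j)$ computed from the actual Catalan word $w_j$ via Lemma \ref{lem:N^m(w)profile} must agree with Definition \ref{def:N^m(w)profile} applied to the formal sequence above, which can be degenerate (for instance, if $h_{j+1}-1 = l_j$, so that the $(j+1)$-th rising leg of $w_j$ collapses and two adjacent valleys merge in its true profile). Any such degeneracy contributes a factor $\qpermute{\cdot}{0}\,\qpermute{\cdot}{0} = 1$ in Definition \ref{def:N^m(w)profile}, so the formal value coincides with the one given by the true profile of $w_j$; this is precisely what makes the profile-based matching go through.
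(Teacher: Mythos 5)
Your proposal is correct and follows essentially the same route as the paper's proof: reduce via Lemma \ref{lem:innerprod}(i) to a double sum, group the contributing words $w$ by which rising leg of $v$ (indexed by $j$ with $\xi \leq j \leq r-1$) loses an $x$, collapse the inner sum with Lemma \ref{lem:[n]_q}(i) to $[h_{j+1}]_q[h_{j+1}+m-1]_q-[l_j]_q[l_j+m-1]_q$, and finish with Lemma \ref{lem:N^m(profile)}. Your closing observation that the degenerate case $h_{j+1}-1=l_j$ contributes $\qpermute{\cdot}{0}\qpermute{\cdot}{0}=1$ is exactly the point the paper handles by its four-case analysis of the profile of $w$.
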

\begin{proof}
By Lemma \ref{lem:innerprod}, it suffices to show that $\nabla^{(m)}(v)$ is equal to 
\begin{equation}\label{eq:N^m(v)}
\sum_w\sum_i\nabla^{(m)}(w)[m+2\overline{a}_1+2\overline{a}_2+\cdots+2\overline{a}_i]_q,
\end{equation}
where the first sum is over all $w=a_1a_2 \cdots a_{2n} \in \Cat_n$ such that $v=a_1 \cdots a_ixa_{i+1} \cdots a_{2n}y$ for some $i$ ($0 \leq i \leq 2n$), and the second sum is over all such $i$. 

\medskip
\noindent Let $(l_0,h_1,l_1,\ldots,h_r,l_r)$ denote the profile of $v$. Since $v$ is nontrivial, we have $r \geq 1$. Let $\xi=\max\{j \mid 0 \leq j \leq r-1,l_j=0\}$. 

\medskip
\noindent To compute the sum \eqref{eq:N^m(v)}, we consider what kind of words $w$ are being summed over, and for such $w$ what is the corresponding sum over $i$. 

\medskip
\noindent For any word $w$ being summed over in \eqref{eq:N^m(v)}, there exists an integer $j$ ($\xi \leq j \leq r-1$) such that the following hold: 
\begin{enumerate}
\item if $l_j<h_{j+1}-1$ and $j<r-1$, then the profile of $w$ is given by 
\begin{equation*}
(l_0,h_1,l_1,\ldots,h_j,l_j,h_{j+1}-1,\ldots,h_r-1,l_r); 
\end{equation*}
\item if $l_j<h_{j+1}-1$ and $j=r-1$, then the profile of $w$ is given by 
\begin{equation*}
(l_0,h_1,l_1,\ldots,h_j,l_j,h_r-1,l_r); 
\end{equation*}
\item if $l_j=h_{j+1}-1$ and $j<r-1$, then the profile of $w$ is given by 
\begin{equation*}
(l_0,h_1,l_1,\ldots,h_j,l_{j+1}-1,\ldots,h_r-1,l_r); 
\end{equation*}
\item if $l_j=h_{j+1}-1$ and $j=r-1$, then the profile of $w$ is given by 
\begin{equation*}
(l_0,h_1,l_1,\ldots,h_j,l_r). 
\end{equation*}
\end{enumerate}

\noindent For each of the cases above, by Definitions \ref{def:qpermute} and \ref{def:N^m(w)profile} we have that 
\begin{equation*}
\nabla^{(m)}(w)=\nabla^{(m)}(l_0,h_1,l_1,\ldots,h_j,l_j,h_{j+1}-1,\ldots,h_r-1,l_r). 
\end{equation*}

\medskip
\noindent For such $w$, the corresponding sum over $i$ in \eqref{eq:N^m(v)} is equal to 
\begin{align*}
&\sum_i[m+2\overline{a}_1+2\overline{a}_2+\cdots+2\overline{a}_i]_q\\
&=\sum_{s=l_j}^{h_{j+1}-1}[m+2s]_q\\
&=\sum_{s=l_j}^{h_{j+1}-1}\Big([s+1]_q[s+m]_q-[s]_q[s+m-1]_q\Big)\\
&=[h_{j+1}]_q[h_{j+1}+m-1]_q-[l_j]_q[l_j+m-1]_q, 
\end{align*}
where the second step follows by setting $a=s, b=s+m-1, c=1$ in Lemma \ref{lem:[n]_q}(i). 

\medskip
\noindent Using the above comments, we now evaluate the entire sum in \eqref{eq:N^m(v)}. We have 
\begin{align*}
&\sum_w\sum_i\nabla^{(m)}(w)[m+2\overline{a}_1+2\overline{a}_2+\cdots+2\overline{a}_i]_q\\
&=\sum_{j=\xi}^{r-1}\nabla^{(m)}(l_0,h_1,l_1,\ldots,h_j,l_j,h_{j+1}-1,\ldots,h_r-1,l_r)\\
&\hspace{8em} \times \Big([h_{j+1}]_q[h_{j+1}+m-1]_q-[l_j]_q[l_j+m-1]_q\Big)\\
&=\nabla^{(m)}(l_0,h_1,l_1,\ldots,h_r,l_r) \\
&=\nabla^{(m)}(v), 
\end{align*}
where the second step follows by Lemma \ref{lem:N^m(profile)}. 
\end{proof}

\begin{proposition}\label{prop:N^m_nrecursion}\rm
For $m \in \mZ$ and $n \geq 1$, 
\begin{equation}\label{eq:N^m_nrecursion}
\nabla^{(m)}_{n+1}=\frac{\left(q^m x \star \nabla^{(m)}_n-q^{-m} \nabla^{(m)}_n \star x\right)y}{q-q^{-1}}. 
\end{equation}
\end{proposition}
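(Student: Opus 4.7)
\medskip
\noindent The plan is to test the desired equality by pairing both sides against an arbitrary word $v$ using the nondegenerate bilinear form $(\,,\,)$ from Definition \ref{def:biform}. Since this pairing simply reads off coefficients in the standard basis, \eqref{eq:N^m_nrecursion} will follow once both sides yield the same value under $(-,\,v)$ for every word $v \in \mV$.

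\medskip
\noindent First I would unfold the left-hand side using Definition \ref{def:N^m_n}, so that $(\nabla^{(m)}_{n+1},\, v)$ equals $\nabla^{(m)}(v)$ when $v \in \Cat_{n+1}$ and equals $0$ otherwise. Next, on the right-hand side I would expand $\nabla^{(m)}_n = \sum_{w \in \Cat_n}\nabla^{(m)}(w)\, w$ and use bilinearity of $\star$ and of $(\,,\,)$ to pull the scalars out, turning the pairing into
\[
\sum_{w \in \Cat_n}\nabla^{(m)}(w)\left(\frac{(q^m x \star w - q^{-m}\, w \star x)\, y}{q-q^{-1}},\, v\right).
\]
If $v$ is not a Catalan word of length $2n+2$, then each inner product in this sum is $0$ by Lemma \ref{lem:innerprod}(ii), matching the zero on the left. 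If $v \in \Cat_{n+1}$, then the whole sum collapses to $\nabla^{(m)}(v)$ by Lemma \ref{lem:N^m(v)decomp}, again matching the left. Since both sides agree under pairing with every word, nondegeneracy of $(\,,\,)$ gives the identity.

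\medskip
\noindent I do not anticipate a real obstacle at this stage: the substantial combinatorial content is already shouldered by Lemma \ref{lem:N^m(v)decomp}, which in turn rests on the telescoping profile identity of Lemma \ref{lem:N^m(profile)} together with the $q$-number identity Lemma \ref{lem:[n]_q}(i). The only subtle point is to confirm that the sum over $w \in \Cat_n$ on the right-hand side never references the undefined quantity $\nabla^{(m)}(\m1)$, which is ensured by the hypothesis $n \geq 1$.
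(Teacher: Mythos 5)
Your proposal is correct and follows essentially the same route as the paper: pair both sides against an arbitrary word $v$ via the bilinear form of Definition \ref{def:biform}, split into the cases where $v$ is not of length $2n+2$, not Catalan, or lies in $\Cat_{n+1}$, and invoke Lemma \ref{lem:innerprod} and Lemma \ref{lem:N^m(v)decomp} exactly as the paper does. No gaps.
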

\begin{proof}
Let $v$ denote a word. We will show that the inner products of $v$ with the both sides of \eqref{eq:N^m_nrecursion} coincide. 

\medskip
\noindent If $v$ does not have length $2n+2$, then the two inner products are both $0$. 

\medskip
\noindent If $v$ is not Catalan, then the two inner products are both $0$ by Definition \ref{def:N^m_n} and Lemma \ref{lem:innerprod}. 

\medskip
\noindent If $v \in \Cat_{n+1}$, then by Definition \ref{def:N^m_n} and Lemma \ref{lem:N^m(v)decomp}, 
\begin{align*}
&\left(\frac{\left(q^m x \star \nabla^{(m)}_n-q^{-m} \nabla^{(m)}_n \star x\right)y}{q-q^{-1}},v\right)\\
&=\sum_{w \in \Cat_n}\nabla^{(m)}(w)\left(\frac{(q^m x \star w-q^{-m} w \star x)y}{q-q^{-1}},v\right)\\
&=\nabla^{(m)}(v) \\
&=\left(\nabla^{(m)}_{n+1},v\right). 
\end{align*}
\end{proof}

\noindent We consider the case $m=0$ in Proposition \ref{prop:N^m_nrecursion}. Recall that in Lemma \ref{lem:N^0_n} we have showed that $\nabla^{(0)}_n=xC_{n-1}y$ for all $n \geq 1$. 

\begin{corollary}\label{cor:xCnyrecursion}\rm
(See \cite[Lemma 7.3]{ter_beck}.) For $n \geq 1$, 
\begin{equation*}
xC_n=\frac{x \star xC_{n-1}y-xC_{n-1}y \star x}{q-q^{-1}}. 
\end{equation*}
\end{corollary}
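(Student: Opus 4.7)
The plan is to obtain this identity as the $m=0$ specialization of Proposition \ref{prop:N^m_nrecursion}, combined with the explicit formula for $\nabla^{(0)}_n$ from Lemma \ref{lem:N^0_n}. In effect the corollary is a direct reading of the recurrence in \eqref{eq:N^m_nrecursion} once one recognizes what $\nabla^{(0)}_n$ is, modulo a routine cancellation step.

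First I would set $m = 0$ in Proposition \ref{prop:N^m_nrecursion}. Since $q^0 = 1$, the recurrence collapses to
$$\nabla^{(0)}_{n+1} = \frac{(x \star \nabla^{(0)}_n - \nabla^{(0)}_n \star x)\, y}{q - q^{-1}},$$
where the outermost $y$ on the right denotes free-product concatenation rather than the $q$-shuffle product. Next, by Lemma \ref{lem:N^0_n}, $\nabla^{(0)}_n = xC_{n-1}y$ and $\nabla^{(0)}_{n+1} = xC_n y$. Substituting these into the recurrence produces
$$(xC_n)\, y = \frac{(x \star xC_{n-1}y - xC_{n-1}y \star x)\, y}{q - q^{-1}}.$$

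The final step is to cancel the trailing free-product $y$ from both sides. This is legitimate because right concatenation by $y$ is an injective linear map on $\mV$: in the standard basis it sends each word $w$ to the distinct word $wy$, so $Ay = By$ forces $A = B$. Carrying out this cancellation yields the claimed identity.

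There is no serious obstacle here; the main point requiring care is keeping the two algebra structures on $\mV$ straight, so that one remembers the trailing $y$ in \eqref{eq:N^m_nrecursion} is a free product. This is precisely what makes the cancellation step trivial and turns Proposition \ref{prop:N^m_nrecursion} at $m = 0$ into the stated recurrence for $xC_n$.
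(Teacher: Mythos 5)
Your proposal is correct and follows exactly the paper's own proof: set $m=0$ in Proposition \ref{prop:N^m_nrecursion}, identify $\nabla^{(0)}_n=xC_{n-1}y$ via Lemma \ref{lem:N^0_n}, and strip the trailing free-product $y$ from both sides. Your explicit justification of the cancellation step (injectivity of right concatenation by $y$ on the standard basis) is a welcome detail that the paper leaves implicit.
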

\begin{proof}
Setting $m=0$ in Proposition \ref{prop:N^m_nrecursion} and applying Lemma \ref{lem:N^0_n} gives 
\begin{equation*}
xC_ny=\frac{\left(x \star xC_{n-1}y-xC_{n-1}y \star x\right)y}{q-q^{-1}}. 
\end{equation*}
In the above equation, on each side remove the $y$ on the right to obtain the result. 
\end{proof}

\noindent Next we will show that the elements $\{\nabla^{(m)}_n\}_{m \in \mZ,n \geq 1}$ mutually commute with repect to the $q$-shuffle product. This will be achieved in three steps. Before giving the steps, we have some comments. Recall that above Corollary \ref{cor:xCnyrecursion} we mentioned that $\nabla^{(0)}_n=xC_{n-1}y$ for all $n \geq 1$. Also, by Lemma \ref{lem:N^m_1} we have $\nabla^{(m)}_1=xy$ for all $m \in \mZ$. We now give the three steps. 
\begin{enumerate}
\item In Section 8, we will show that $\nabla^{(m)}_n$ commutes with $xy$ for $m \in \mZ$ and $n \geq 1$. 
\item In Section 9, we will show that the elements $\{\nabla^{(0)}_n\}_{n \geq 1}$ mutually commute. 
\item In Section 11, we will show that the elements $\{\nabla^{(m)}_n\}_{m \in \mZ,n \geq 1}$ mutually commute. 
\end{enumerate}

\section{$\nabla^{(m)}_n$ commutes with $xy$}
Let $m \in \mZ$ and let $n \geq 1$. In this section, we show that $\nabla^{(m)}_n$ commutes with $xy$ with respect to the $q$-shuffle product. 

\medskip
\noindent The following results will be useful. 

\begin{lemma}\label{lem:catprod}\rm
Let $n,k \in \mN$. If $v \in \Cat_n$ and $w \in \Cat_k$, then $v \star w$ is a linear combination of words in $\Cat_{n+k}$. 
\end{lemma}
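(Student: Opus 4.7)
The plan is to separate the algebraic structure of the $q$-shuffle product from the combinatorial property of being Catalan. I would first establish the following auxiliary fact about arbitrary words (not just Catalan ones): for any words $v$ and $w$, every word appearing with nonzero coefficient in $v \star w$ is a \emph{shuffle} of $v$ and $w$ in the classical sense, namely a word of length $r+s$ (where $r$ and $s$ are the lengths of $v$ and $w$) obtained by interleaving the letters of $v$ and $w$ while preserving their internal orders. This is a routine induction on $r+s$. The base case, where one of $v,w$ is trivial, is immediate from $\m1 \star w = w$. For the inductive step with both words nontrivial, write $v = v_1 \cdots v_r$ and $w = w_1 \cdots w_s$; the recursive rule from Section~2 gives
\begin{equation*}
v \star w = v_1\bigl((v_2 \cdots v_r) \star w\bigr) + w_1\bigl(v \star (w_2 \cdots w_s)\bigr) q^{\langle w_1, v_1 \rangle + \cdots + \langle w_1, v_r \rangle}.
\end{equation*}
By the induction hypothesis every word in the support of $(v_2 \cdots v_r) \star w$ is a shuffle of $v_2 \cdots v_r$ with $w$, and prepending $v_1$ produces a shuffle of $v$ with $w$; the analogous reasoning applies to the second summand.

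Next I would verify the combinatorial fact that any shuffle of two Catalan words is Catalan. Suppose $v = v_1 \cdots v_{2n} \in \Cat_n$ and $w = w_1 \cdots w_{2k} \in \Cat_k$, and let $z = z_1 \cdots z_{2n+2k}$ be a shuffle of $v$ and $w$. For each $\ell$ with $0 \leq \ell \leq 2n+2k$, the prefix $z_1 \cdots z_\ell$ is itself a shuffle of some initial segments $v_1 \cdots v_i$ and $w_1 \cdots w_j$ with $i+j=\ell$. Since weights add, the elevation of $z$ at position $\ell$ equals $e^v_i + e^w_j$, the sum of the respective elevations. Both summands are nonnegative because $v$ and $w$ are Catalan, and both vanish when $\ell = 2n+2k$. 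Hence the elevation sequence of $z$ is nonnegative with zero final entry, so $z \in \Cat_{n+k}$ by Lemma~\ref{lem:eleCat}.

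Combining the two steps yields the lemma: $v \star w$ is a linear combination of shuffles of $v$ and $w$, each of which is a Catalan word of length $2(n+k)$. The argument is essentially routine; the only point requiring care is confirming that the recursive expansion of $\star$ never generates a term whose underlying word fails to be a shuffle, which amounts to tracking that each recursive call prepends a letter drawn from either $v$ or $w$ in left-to-right order. I do not anticipate a genuine obstacle.
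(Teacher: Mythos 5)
Your proof is correct and takes essentially the same approach as the paper, which simply states that the lemma ``follows from the definition of the $q$-shuffle product in Section 2.'' Your two-step argument --- that the support of $v \star w$ consists of classical shuffles of $v$ and $w$, and that any shuffle of two Catalan words is Catalan because elevations of prefixes add and remain nonnegative --- is precisely the routine verification the paper leaves implicit.
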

\begin{proof}
Follows from the definition of the $q$-shuffle product in Section 2. 
\end{proof}

\begin{lemma}\label{lem:l_j=-1}\rm
Let $m \in \mZ$ and let $(l_0,h_1,l_1,\ldots,h_r,l_r)$ denote a sequence of integers with $r \geq 1$. If there exists an integer $i$ ($1 \leq i \leq r-1$) such that $l_i=-1 \neq h_{i+1}$, then 
\begin{equation*}
\nabla^{(m)}(l_0,h_1,l_1,\ldots,h_r,l_r)=0. 
\end{equation*}
\end{lemma}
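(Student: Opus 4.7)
The plan is to read off the relevant factor of $\nabla^{(m)}(l_0,h_1,l_1,\ldots,h_r,l_r)$ directly from Definition \ref{def:N^m(w)profile} and show that this factor is zero under the given hypothesis. Since $1 \leq i \leq r-1$, the $q$-factorial expansion of $\nabla^{(m)}$ contains the pair
\[
\qpermute{h_{i+1}+m-1}{h_{i+1}-l_i}\qpermute{h_{i+1}}{h_{i+1}-l_i},
\]
and the strategy is to prove the second of these two brackets vanishes. Substituting $l_i = -1$, this bracket becomes $\qpermute{h_{i+1}}{h_{i+1}+1}$, so the proof reduces to showing $\qpermute{h_{i+1}}{h_{i+1}+1} = 0$ whenever $h_{i+1} \neq -1$.

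Next I would split into two cases according to the sign of $h_{i+1}$. If $h_{i+1} \geq 0$, then $h_{i+1}+1 \geq 1$, and by the top clause of Definition \ref{def:qpermute}
\[
\qpermute{h_{i+1}}{h_{i+1}+1} = [h_{i+1}]_q [h_{i+1}-1]_q \cdots [1]_q [0]_q,
\]
which contains the factor $[0]_q = 0$ and hence is zero. If instead $h_{i+1} \leq -2$, then $h_{i+1}+1 \leq -1$, so the bottom clause of Definition \ref{def:qpermute} gives $\qpermute{h_{i+1}}{h_{i+1}+1} = 0$ outright. The excluded value $h_{i+1}=-1$ is precisely the one for which $h_{i+1}+1 = 0$ and the middle clause of Definition \ref{def:qpermute} would have made the bracket equal to $1$; this is exactly why the hypothesis requires $l_i \neq h_{i+1}$.

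In either case the $(i+1)$-st factor pair in the defining product vanishes, and so the entire product $\nabla^{(m)}(l_0,h_1,l_1,\ldots,h_r,l_r)$ is zero, as desired. There is no real obstacle here; the only point requiring any care is bookkeeping of the case $h_{i+1}=-1$, which is exactly what the hypothesis $l_i = -1 \neq h_{i+1}$ rules out.
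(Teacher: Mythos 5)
Your proposal is correct and follows essentially the same route as the paper, whose proof simply states that the result follows from Definitions \ref{def:qpermute} and \ref{def:N^m(w)profile}; you have merely written out the details, correctly identifying that the factor $\qpermute{h_{i+1}}{h_{i+1}-l_i}=\qpermute{h_{i+1}}{h_{i+1}+1}$ vanishes for $h_{i+1}\geq 0$ (via the $[0]_q$ factor) and for $h_{i+1}\leq -2$ (via the bottom clause), with $h_{i+1}=-1$ being exactly the excluded case. The case analysis and the role of the hypothesis $l_i=-1\neq h_{i+1}$ are handled accurately.
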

\begin{proof}
Follows from Definitions \ref{def:qpermute} and \ref{def:N^m(w)profile}. 
\end{proof}

\begin{lemma}\label{lem:coeff[]v1}\rm
Let $m \in \mZ$. For $n \geq 1$ and $w \in \Cat_{n+1}$ with profile $(l_0,h_1,l_1,\ldots,h_r,l_r)$, the coefficient of $w$ in $\dfrac{xy \star \nabla^{(m)}_n-\nabla^{(m)}_n \star xy}{q-q^{-1}}$ is equal to 
\begin{align*}
&\sum_{0 \leq i<j \leq r}\nabla^{(m)}(l_0,h_1,l_1,\ldots,l_i,h_{i+1}-1,\ldots,h_j-1,l_j,\ldots,h_r,l_r)\\
&\hspace{16em} \times [l_i-h_{i+1}]_q[l_j-h_j]_q[l_i+h_{i+1}-l_j-h_j]_q. 
\end{align*}
\end{lemma}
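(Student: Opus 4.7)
The plan is to compute the coefficient of $w$ by expanding each $q$-shuffle product combinatorially and then grouping terms according to the profile structure of $w$. First, using the recursive rules for $\star$ from Section 2 (peeling off one letter at a time from either $xy$ or $v$), one obtains for any word $v = v_1 \cdots v_{2n}$ the expansions
\[
xy \star v = \sum_{0 \leq i \leq j \leq 2n} v_1 \cdots v_i \, x \, v_{i+1} \cdots v_j \, y \, v_{j+1} \cdots v_{2n} \, q^{-2 s_{ij}},
\]
\[
v \star xy = \sum_{0 \leq i \leq j \leq 2n} v_1 \cdots v_i \, x \, v_{i+1} \cdots v_j \, y \, v_{j+1} \cdots v_{2n} \, q^{2 s_{ij}},
\]
where $s_{ij} = \overline{v}_{i+1} + \cdots + \overline{v}_j$; the $q$-powers arise from the identities $\langle x, v_k\rangle = 2\overline{v}_k$ and $\langle y, v_k\rangle = -2\overline{v}_k$. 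Subtracting, using $q^{-2s} - q^{2s} = -(q-q^{-1})[2s]_q$, and noting that $s_{ii}=0$ kills the diagonal terms, we obtain
\[
\frac{xy \star v - v \star xy}{q - q^{-1}} = -\sum_{0 \leq i < j \leq 2n} v_1 \cdots v_i \, x \, v_{i+1} \cdots v_j \, y \, v_{j+1} \cdots v_{2n} \, [2 s_{ij}]_q.
\]

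Next, for a fixed $w \in \Cat_{n+1}$ with profile $(l_0, h_1, l_1, \ldots, h_r, l_r)$, I classify the contributing $(v; i, j)$ triples by the profile of $w$. Each such triple corresponds to deleting one $x$ in some ascent of $w$ and one $y$ in some later descent of $w$ to recover $v$. Grouping by valley indices $0 \leq i' < j' \leq r$ with the deleted $x$ in ascent $i'+1$ (from $l_{i'}$ up to $h_{i'+1}$) and the deleted $y$ in descent $j'$ (from $h_{j'}$ down to $l_{j'}$): the word $v$ depends only on the pair $(i',j')$, not on the specific deletion positions, and its profile is obtained from that of $w$ by lowering every entry strictly between $l_{i'}$ and $l_{j'}$ by one. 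Parameterizing the specific choices by $a \in \{1, \ldots, h_{i'+1} - l_{i'}\}$ (which $x$ in ascent $i'+1$) and $b \in \{1, \ldots, h_{j'} - l_{j'}\}$ (which $y$ in descent $j'$), a direct computation with the elevation sequence gives $s_{ij} = h_{j'} - l_{i'} + 1 - a - b$.

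Combining these observations, the coefficient of $w$ in $\frac{xy \star \nabla^{(m)}_n - \nabla^{(m)}_n \star xy}{q - q^{-1}}$ equals
\[
-\sum_{0 \leq i' < j' \leq r} \nabla^{(m)}(l_0, h_1, l_1, \ldots, l_{i'}, h_{i'+1}-1, \ldots, h_{j'}-1, l_{j'}, \ldots, h_r, l_r) \sum_{a=1}^{h_{i'+1}-l_{i'}} \sum_{b=1}^{h_{j'}-l_{j'}} [2(h_{j'} - l_{i'} + 1 - a - b)]_q.
\]
The inner double sum is evaluated via the telescoping identity $\sum_{b=1}^{\beta}[M - 2b]_q = [M - \beta - 1]_q [\beta]_q$ (a routine consequence of Lemma \ref{lem:[n]_q}(i)), applied twice, to give $[h_{i'+1}-l_{i'}]_q [h_{j'}-l_{j'}]_q [h_{j'}+l_{j'}-l_{i'}-h_{i'+1}]_q$. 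Applying $[a]_q = -[-a]_q$ to each of the three factors and absorbing the overall minus sign then produces the claimed expression $[l_{i'}-h_{i'+1}]_q [l_{j'}-h_{j'}]_q [l_{i'}+h_{i'+1}-l_{j'}-h_{j'}]_q$.

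The main obstacle is the combinatorial bookkeeping linking $(v; i, j)$ triples to valley-index pairs $(i', j')$, together with handling degenerate configurations where the lowered profile is not in standard form (for instance, when an intermediate valley of $w$ equals zero, forcing the corresponding lowered valley to become $-1$, or when a lowered peak coincides with an adjacent valley). These edge cases are handled uniformly by Definition \ref{def:N^m(w)profile}, which extends $\nabla^{(m)}$ to arbitrary integer sequences; the vanishing on such impossible configurations arises automatically from the convention $\qpermute{\cdot}{n} = 0$ for $n \leq -1$ together with factors like $[l_k]_q = 0$ when an original valley is zero.
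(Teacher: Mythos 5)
Your proposal is correct and follows essentially the same route as the paper: classify the contributing pairs (a word $v\in\Cat_n$ together with the two insertion positions) according to a pair of valley indices $0\le i<j\le r$ of $w$, observe that $\nabla^{(m)}(v)$ equals the displayed profile value, evaluate the resulting $q$-bracket sum to the triple product $[l_i-h_{i+1}]_q[l_j-h_j]_q[l_i+h_{i+1}-l_j-h_j]_q$, and drop the hidden condition $l_k\ge 1$ for $i<k<j$ via the vanishing conventions in Definition \ref{def:qpermute} (the paper's Lemma \ref{lem:l_j=-1}). Your explicit double telescoping evaluation of $\sum_{a,b}[2(h_j-l_i+1-a-b)]_q$ supplies a detail the paper leaves implicit, and I checked that it is correct.
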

\begin{proof}
To prove our result, we consider what are the words $v \in \Cat_n$ such that $(xy \star v,w)$ or $(v \star xy,w)$ is nonzero, and for such $v$ what is the coefficient of $w$ in $\dfrac{xy \star v-v \star xy}{q-q^{-1}}$. 

\medskip
\noindent For any $v \in \Cat_n$ such that $(xy \star v,w)$ or $(v \star xy,w)$ is nonzero, let us compare the profile of $v$ with the profile of $w$. By construction, there exists integers $i,j$ ($0 \leq i<j \leq r$) such that $l_k \geq 1$ for $i<k<j$ and the following hold: 
\begin{enumerate}[(i)]
\item if $l_i<h_{i+1}-1$, $h_j-1>l_j$, and $i<j-1$, then the profile of $v$ is given by 
\begin{equation*}
(l_0,h_1,l_1,\ldots,l_i,h_{i+1}-1,\ldots,h_j-1,l_j,\ldots,h_r,l_r); 
\end{equation*}
\item if $l_i<h_{i+1}-1$, $h_j-1>l_j$, and $i=j-1$, then the profile of $v$ is given by 
\begin{equation*}
(l_0,h_1,l_1,\ldots,l_i,h_{i+1}-1,l_j,\ldots,h_r,l_r); 
\end{equation*}
\item if $l_i=h_{i+1}-1$, $h_j-1>l_j$, and $i<j-1$, then the profile of $v$ is given by 
\begin{equation*}
(l_0,h_1,l_1,\ldots,h_i,l_{i+1}-1,\ldots,h_j-1,l_j,\ldots,h_r,l_r); 
\end{equation*}
\item if $l_i=h_{i+1}-1$, $h_j-1>l_j$, and $i=j-1$, then the profile of $v$ is given by 
\begin{equation*}
(l_0,h_1,l_1,\ldots,h_i,l_j,\ldots,h_r,l_r); 
\end{equation*}
\item if $l_i<h_{i+1}-1$, $h_j-1=l_j$ and $i<j-1$, then the profile of $v$ is given by 
\begin{equation*}
(l_0,h_1,l_1,\ldots,l_i,h_{i+1}-1,\ldots,l_{j-1}-1,h_{j+1},\ldots,h_r,l_r); 
\end{equation*}
\item if $l_i<h_{i+1}-1$, $h_j-1=l_j$ and $i=j-1$, then the profile of $v$ is given by 
\begin{equation*}
(l_0,h_1,l_1,\ldots,l_i,h_{j+1},\ldots,h_r,l_r); 
\end{equation*}
\item if $l_i=h_{i+1}-1$, $h_j-1=l_j$, and $i<j-1$, then the profile of $v$ is given by 
\begin{equation*}
(l_0,h_1,l_1,\ldots,h_i,l_{i+1}-1,\ldots,l_{j-1}-1,h_{j+1},\ldots,h_r,l_r); 
\end{equation*}
\item if $l_i=h_{i+1}-1$, $h_j-1=l_j$, and $i=j-1$, then the profile of $v$ is given by 
\begin{equation*}
(l_0,h_1,l_1,\ldots,h_i,l_i,h_{j+1},\ldots,h_r,l_r). 
\end{equation*}
\end{enumerate}

\medskip
\noindent For each of the cases above, by Definitions \ref{def:qpermute} and \ref{def:N^m(w)profile} we have that 
\begin{equation*}
\nabla^{(m)}(v)=\nabla^{(m)}(l_0,h_1,l_1,\ldots,l_i,h_{i+1}-1,\ldots,h_j-1,l_j,\ldots,h_r,l_r). 
\end{equation*}

\medskip
\noindent For such $v$, we have that 
\begin{equation*}
\left(\frac{xy \star v-v \star xy}{q-q^{-1}},w\right)=\sum_{i,j}[l_i-h_{i+1}]_q[l_j-h_j]_q[l_i+h_{i+1}-l_j-h_j]_q, 
\end{equation*}
where the sum is over all $i,j$ such that the profile of $v$ is given in one of the cases (i)-(viii) above. 

\medskip
\noindent By the above comments, the coefficient of $w$ in $\dfrac{xy \star \nabla^{(m)}_n-\nabla^{(m)}_n \star xy}{q-q^{-1}}$ is equal to 
\begin{align*}
&\sum_{\substack{0 \leq i<j \leq r \\ l_k \geq 1 \text{ for }i<k<j}}\nabla^{(m)}(l_0,h_1,l_1,\ldots,l_i,h_{i+1}-1,\ldots,h_j-1,l_j,\ldots,h_r,l_r)\\
&\hspace{16em} \times [l_i-h_{i+1}]_q[l_j-h_j]_q[l_i+h_{i+1}-l_j-h_j]_q. 
\end{align*}

\noindent By Lemma \ref{lem:l_j=-1} the condition 
\begin{equation*}
l_k \geq 1 \text{ for } i<k<j 
\end{equation*}
in the above sum can be dropped. The result follows. 
\end{proof}

\begin{lemma}\label{lem:coeff[]v2}\rm
Let $m \in \mZ$. For $n \geq 1$ and $w \in \Cat_{n+1}$ with profile $(l_0,h_1,l_1,\ldots,h_r,l_r)$, the coefficient of $w$ in $\dfrac{xy \star \nabla^{(m)}_n-\nabla^{(m)}_n \star xy}{q-q^{-1}}$ is equal to 
\begin{align*}
&\sum_{1 \leq i<j \leq r-1}\nabla^{(m)}(l_0,h_1,l_1,\ldots,l_{i-1},h_i-1,l_i,h_{i+1}-1,\ldots,h_j-1,l_j,h_{j+1}-1,l_{j+1},\ldots,h_r,l_r)\\
&\hspace{4em} \times [l_i]_q[l_j]_q[h_i]_q[h_{j+1}]_q\\
&\hspace{4em} \times \Big([h_i+m-1]_q[h_{j+1}+m-1]_q[l_i-l_j]_q+[h_{j+1}+m-1]_q[l_i+m-1]_q[l_j-h_i]_q\\
&\hspace{7em}+[l_i+m-1]_q[l_j+m-1]_q[h_i-h_{j+1}]_q+[l_j+m-1]_q[h_i+m-1]_q[h_{j+1}-l_i]_q\Big)\\
&+\sum_{i=1}^{r-1}\nabla^{(m)}(l_0,h_1,l_1,\ldots,l_{i-1},h_i-1,l_i,h_{i+1}-1,l_{i+1},\ldots,h_r,l_r)\\
&\hspace{4em} \times [l_i]_q[h_i]_q[h_{i+1}]_q\\
&\hspace{4em} \times \Big([h_{i+1}+m-1]_q[l_i-h_i]_q+[l_i+m-1]_q[h_i-h_{i+1}]_q+[h_i+m-1]_q[h_{i+1}-l_i]_q\Big). 
\end{align*}
\end{lemma}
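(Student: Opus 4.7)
The plan is to derive Lemma \ref{lem:coeff[]v2} from Lemma \ref{lem:coeff[]v1} by a purely algebraic transformation, since both expressions compute the same coefficient of $w$ in $\dfrac{xy \star \nabla^{(m)}_n-\nabla^{(m)}_n \star xy}{q-q^{-1}}$. The transformation has three main ingredients: a reindexing of the summation, a four-term expansion of the scalar factor, and a careful repackaging of the $\nabla^{(m)}$-values.

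First, I would reindex the sum in Lemma \ref{lem:coeff[]v1} via $i \mapsto i - 1$, $j \mapsto j + 1$. Under this shift, the profile modification ``decrease $h_{i+1}, \ldots, h_j$ by $1$'' becomes ``decrease $h_i, h_{i+1}, \ldots, h_{j+1}$ by $1$'', which matches the profile modifications appearing in Lemma \ref{lem:coeff[]v2}. The original range $0 \leq i < j \leq r$ splits under this shift into: pairs with $1 \leq i < j \leq r-1$ (giving the double sum), pairs with $1 \leq i = j \leq r-1$ coming from $j - i = 2$ in the original (giving the single sum, where the profile modification reduces to lowering two adjacent $h$'s), and a few boundary configurations that I would verify either vanish or cancel pairwise.

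Second, I would apply Lemma \ref{lem:[n]_q}(iv) to the scalar $[l_i - h_{i+1}]_q[l_j - h_j]_q[l_i + h_{i+1} - l_j - h_j]_q$ appearing in each summand. With the substitution $a = l_i + m - 1$, $b = l_j + m - 1$, $c = h_{i+1} + m - 1$, $d = h_j + m - 1$, the right-hand side of (iv) matches this product, so the identity rewrites it as a linear combination of four three-fold products of $q$-integers. After the reindexing of the first step, these four products are structurally identical to the four terms inside the parenthesis of Lemma \ref{lem:coeff[]v2}'s double sum.

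Third, I would reconcile the $\nabla^{(m)}$-factors and extract the prefactor $[l_i]_q[l_j]_q[h_i]_q[h_{j+1}]_q$ (resp.\ $[l_i]_q[h_i]_q[h_{i+1}]_q$ for the single sum). The profile appearing in Lemma \ref{lem:coeff[]v2} explicitly retains adjacent entries $l_{i-1}, l_i, l_j, l_{j+1}$ that are implicit in the more compressed profile used by Lemma \ref{lem:coeff[]v1}. Using Definition \ref{def:N^m(w)profile} together with $\qpermute{\cdot}{\cdot}$ from Definition \ref{def:qpermute}, I would factor $\nabla^{(m)}$ of the shifted Lemma \ref{lem:coeff[]v1} profile as $\nabla^{(m)}$ of the more detailed Lemma \ref{lem:coeff[]v2} profile times a product of $q$-brackets, and verify that these extra brackets assemble precisely into the stated prefactor.

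The main obstacle will be the heavy bookkeeping in the third step: tracking all the $\qpermute{\cdot}{\cdot}$-contributions from the additional profile entries, matching them against $[l_i]_q[l_j]_q[h_i]_q[h_{j+1}]_q$ and $[l_i]_q[h_i]_q[h_{i+1}]_q$, and correctly handling degenerate cases (e.g.\ $l_0 = 0$, $l_r = 0$, collisions of consecutive indices, or shifted profiles that coincide with the original). Organizing this systematically by cases, in the spirit of the case analysis already used in the proof of Lemma \ref{lem:coeff[]v1}, should allow the identification to be completed.
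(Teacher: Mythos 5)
Your three-ingredient skeleton (reindex, expand the scalar via Lemma \ref{lem:[n]_q}(iv), repackage the $\nabla^{(m)}$-values) is the right one, but the way you commit to combining the ingredients would not produce the stated formula. The key point you are missing is that the four terms of the expansion must be reindexed \emph{differently}: no shift, $i\mapsto i-1$, both $i\mapsto i-1$ and $j\mapsto j+1$, and $j\mapsto j+1$, respectively. Only then does a single $(i,j)$-summand of the target collect contributions from four \emph{different} words $v$ of Lemma \ref{lem:coeff[]v1}, namely those whose profiles lower the heights strictly between $l_i$ and $l_j$, between $l_{i-1}$ and $l_j$, between $l_{i-1}$ and $l_{j+1}$, and between $l_i$ and $l_{j+1}$. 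A single uniform reindex applied before the expansion achieves nothing: all four expansion terms then carry the same profile and the same indices, so they simply recombine into the original summand, and no regrouping has taken place. This is why the paper's proof expands first and then shifts each of the four resulting sums \eqref{eq:foursums1}--\eqref{eq:foursums4} separately. (It also means the single sum is not produced only by old pairs with $j-i=2$; it collects the diagonal $i=j$ of three of the four shifted sums, coming from old pairs $(i-1,i)$, $(i-1,i+1)$, and $(i,i+1)$. The discarded boundary terms vanish outright, via $[l_0]_q=[l_r]_q=[0]_q=0$ and $[h-h]_q=0$; there is no pairwise cancellation.)

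Two further concrete errors. First, your substitution $a=l_i+m-1$, $b=l_j+m-1$, $c=h_{i+1}+m-1$, $d=h_j+m-1$ puts the $+m-1$ shift on the same pair of variables as the difference factor (e.g. $[l_i+m-1]_q[l_j+m-1]_q[l_i-l_j]_q$), whereas in the target each parenthesized term carries the $+m-1$ shift on the \emph{complementary} pair (e.g. $[h_i+m-1]_q[h_{j+1}+m-1]_q[l_i-l_j]_q$); the ratio between the two is not a product of $q$-integers, so no choice of prefactor can repair this. The correct substitution is the plain one $a=l_i$, $b=l_j$, $c=h_{i+1}$, $d=h_j$. Second, the premise of your third step is false: the sequence displayed in the statement of Lemma \ref{lem:coeff[]v2} is not a more explicitly written version of a profile from Lemma \ref{lem:coeff[]v1} --- it keeps $l_i$ and $l_j$ unlowered while lowering all of $h_i,\ldots,h_{j+1}$ and the intermediate valleys, so it is not the profile of any word $v$ at all. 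Both the prefactor $[l_i]_q[l_j]_q[h_i]_q[h_{j+1}]_q$ and the $+m-1$ factors arise from the ratios, computed via Definitions \ref{def:qpermute} and \ref{def:N^m(w)profile}, between this reference sequence and the four genuinely different profiles above: leaving a peak $h$ unlowered contributes $[h]_q[h+m-1]_q$, and leaving a valley $l$ unlowered contributes $[l]_q[l+m-1]_q$. The bookkeeping you flag as the main obstacle is real, but it only comes out right after the per-term reindexing and the plain substitution are in place.
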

\begin{proof}
By setting $a=l_i, b=l_j, c=h_{i+1}, d=h_j$ in Lemma \ref{lem:[n]_q}(iv), we have 
\begin{align*}
&[l_i-h_{i+1}]_q[l_j-h_j]_q[l_i+h_{i+1}-l_j-h_j]_q\\
&=[l_i]_q[l_j]_q[l_i-l_j]_q+[l_j]_q[h_{i+1}]_q[l_j-h_{i+1}]_q+[h_{i+1}]_q[h_j]_q[h_{i+1}-h_j]_q+[h_j]_q[l_i]_q[h_j-l_i]_q. 
\end{align*}

\noindent Applying the above equation to Lemma \ref{lem:coeff[]v1} and shifting indices (note that the boundary terms that come up are always equal to $0$) shows that the coefficient of $w$ in $\dfrac{xy \star \nabla^{(m)}_n-\nabla^{(m)}_n \star xy}{q-q^{-1}}$ is equal to 
\begin{align}
&\sum_{1 \leq i<j \leq r-1}\nabla^{(m)}(l_0,h_1,l_1,\ldots,l_i,h_{i+1}-1,\ldots,h_j-1,l_j,\ldots,h_r,l_r)[l_i]_q[l_j]_q[l_i-l_j]_q\label{eq:foursums1}\\
&+\sum_{1 \leq i \leq j \leq r-1}\nabla^{(m)}(l_0,h_1,l_1,\ldots,l_{i-1},h_i-1,\ldots,h_j-1,l_j,\ldots,h_r,l_r)[l_j]_q[h_i]_q[l_j-h_i]_q\label{eq:foursums2}\\
&+\sum_{1 \leq i \leq j \leq r-1}\nabla^{(m)}(l_0,h_1,l_1,\ldots,l_{i-1},h_i-1,\ldots,h_{j+1}-1,l_{j+1},\ldots,h_r,l_r)[h_i]_q[h_{j+1}]_q[h_i-h_{j+1}]_q\label{eq:foursums3}\\
&+\sum_{1 \leq i \leq j \leq r-1}\nabla^{(m)}(l_0,h_1,l_1,\ldots,l_i,h_{i+1}-1,\ldots,h_{j+1}-1,l_{j+1},\ldots,h_r,l_r)[h_{j+1}]_q[l_i]_q[h_{j+1}-l_i]_q.\label{eq:foursums4}
\end{align}

\noindent For $1 \leq i<j \leq r-1$ we examine the $(i,j)$-summand in the sums \eqref{eq:foursums1}--\eqref{eq:foursums4}. We will express each of these summands in terms of 
\begin{equation}\label{eq:nablai<j}
\nabla^{(m)}(l_0,h_1,l_1,\ldots,l_{i-1},h_i-1,l_i,h_{i+1}-1,\ldots,h_j-1,l_j,h_{j+1}-1,l_{j+1},\ldots,h_r,l_r). 
\end{equation}

\noindent Using Definition \ref{def:N^m(w)profile} we obtain: 
\begin{itemize}
\item the $(i,j)$-summand in \eqref{eq:foursums1} is equal to \eqref{eq:nablai<j} times 
\begin{equation*}
[l_i]_q[l_j]_q[h_i]_q[h_{j+1}]_q[h_i+m-1]_q[h_{j+1}+m-1]_q[l_i-l_j]_q;
\end{equation*}
\item the $(i,j)$-summand in \eqref{eq:foursums2} is equal to \eqref{eq:nablai<j} times 
\begin{equation*}
[l_i]_q[l_j]_q[h_i]_q[h_{j+1}]_q[h_{j+1}+m-1]_q[l_i+m-1]_q[l_j-h_i]_q;
\end{equation*}
\item the $(i,j)$-summand in \eqref{eq:foursums3} is equal to \eqref{eq:nablai<j} times 
\begin{equation*}
[l_i]_q[l_j]_q[h_i]_q[h_{j+1}]_q[l_i+m-1]_q[l_j+m-1]_q[h_i-h_{j+1}]_q;
\end{equation*}
\item the $(i,j)$-summand in \eqref{eq:foursums4} is equal to \eqref{eq:nablai<j} times 
\begin{equation*}
[l_i]_q[l_j]_q[h_i]_q[h_{j+1}]_q[l_j+m-1]_q[h_i+m-1]_q[h_{j+1}-l_i]_q. 
\end{equation*}
\end{itemize}
By these comments, For $1 \leq i<j \leq r-1$ the combined $(i,j)$-summand in the sums \eqref{eq:foursums1}--\eqref{eq:foursums4} is equal to the $(i,j)$-summand in first sum of the lemma statement. 

\medskip
\noindent Next, for $1 \leq i=j \leq r-1$ we examine the $(i,j)$-summand in the sums \eqref{eq:foursums2}--\eqref{eq:foursums4}. We will express each of these summands in terms of 
\begin{equation}\label{eq:nablai=j}
\nabla^{(m)}(l_0,h_1,l_1,\ldots,l_{i-1},h_i-1,l_i,h_{i+1}-1,l_{i+1},\ldots,h_r,l_r). 
\end{equation}

\noindent Using Definition \ref{def:N^m(w)profile} we obtain: 
\begin{itemize}
\item the $(i,j)$-summand in \eqref{eq:foursums2} is equal to \eqref{eq:nablai=j} times 
\begin{equation*}
[l_i]_q[h_i]_q[h_{i+1}]_q[h_{i+1}+m-1]_q[l_i-h_i]_q;
\end{equation*}
\item the $(i,j)$-summand in \eqref{eq:foursums3} is equal to \eqref{eq:nablai=j} times 
\begin{equation*}
[l_i]_q[h_i]_q[h_{i+1}]_q[l_i+m-1]_q[h_i-h_{i+1}]_q;
\end{equation*}
\item the $(i,j)$-summand in \eqref{eq:foursums4} is equal to \eqref{eq:nablai=j} times 
\begin{equation*}
[l_i]_q[h_i]_q[h_{i+1}]_q[h_i+m-1]_q[h_{i+1}-l_i]_q. 
\end{equation*}
\end{itemize}
By these comments, for $1 \leq i=j \leq r-1$ the combined $(i,j)$-summand in the sums \eqref{eq:foursums2}--\eqref{eq:foursums4} is equal to the $i$-summand in the second sum of the lemma statement. 
\end{proof}

\begin{proposition}\label{prop:xycommute}\rm
For $m \in \mZ$ and $n \geq 1$, 
\begin{equation*}
xy \star \nabla^{(m)}_n=\nabla^{(m)}_n \star xy. 
\end{equation*}
\end{proposition}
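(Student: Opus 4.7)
The plan is to show that the coefficient of every Catalan word $w \in \Cat_{n+1}$ in the commutator $\dfrac{xy \star \nabla^{(m)}_n - \nabla^{(m)}_n \star xy}{q-q^{-1}}$ is zero. By Lemma \ref{lem:catprod} both $xy \star \nabla^{(m)}_n$ and $\nabla^{(m)}_n \star xy$ are linear combinations of words in $\Cat_{n+1}$, so this will be enough (words that are not Catalan of length $2n+2$ contribute $0$ trivially). The explicit coefficient formula is already available in Lemma \ref{lem:coeff[]v2}, written as a double sum over pairs $1 \leq i < j \leq r-1$ plus a single sum over $1 \leq i \leq r-1$, where $(l_0,h_1,l_1,\ldots,h_r,l_r)$ is the profile of $w$.

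The key observation is that the bracketed four-term factor in the double sum and the bracketed three-term factor in the single sum are exactly the shapes of the identities in Lemma \ref{lem:[n]_q}(iii) and Lemma \ref{lem:[n]_q}(ii), respectively. First I would apply Lemma \ref{lem:[n]_q}(iii) with $a = h_i+m-1$, $b = h_{j+1}+m-1$, $c = l_i+m-1$, $d = l_j+m-1$: the four differences $c-d = l_i-l_j$, $d-a = l_j-h_i$, $a-b = h_i-h_{j+1}$, $b-c = h_{j+1}-l_i$ match term-by-term, so the double-sum factor is identically zero. Then I would apply Lemma \ref{lem:[n]_q}(ii) with $a = h_{i+1}+m-1$, $b = l_i+m-1$, $c = h_i+m-1$: the differences $b-c = l_i-h_i$, $c-a = h_i-h_{i+1}$, $a-b = h_{i+1}-l_i$ again match, so the single-sum factor is identically zero. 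Both sums therefore vanish summand-by-summand, proving the proposition.

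There is no real obstacle here because Lemma \ref{lem:coeff[]v2} has already done the combinatorial heavy lifting; the only care needed is matching the four variables $(a,b,c,d)$ in Lemma \ref{lem:[n]_q}(iii) with the profile quantities $(h_i+m-1,h_{j+1}+m-1,l_i+m-1,l_j+m-1)$ in exactly the right cyclic order so that all four differences align with the brackets appearing in Lemma \ref{lem:coeff[]v2}. Once the substitution is fixed, the proof is a direct application of the $q$-integer identities.
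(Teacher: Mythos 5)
Your proof is correct and is essentially identical to the paper's own argument: both reduce to Lemma \ref{lem:coeff[]v2} and kill each summand's bracketed factor via Lemma \ref{lem:[n]_q}(iii) with $a=h_i+m-1$, $b=h_{j+1}+m-1$, $c=l_i+m-1$, $d=l_j+m-1$ and Lemma \ref{lem:[n]_q}(ii) with $a=h_{i+1}+m-1$, $b=l_i+m-1$, $c=h_i+m-1$. Nothing further is needed.
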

\begin{proof}
Referring to either of the sums in Lemma \ref{lem:coeff[]v2}, we will show that for each summand the big parenthesis is equal to $0$. 

\medskip
\noindent For $1 \leq i<j \leq r-1$, we set $a=h_i+m-1, b=h_{j+1}+m-1, c=l_i+m-1, d=l_j+m-1$ in Lemma \ref{lem:[n]_q}(iii). This yields 
\begin{align*}
&[h_i+m-1]_q[h_{j+1}+m-1]_q[l_i-l_j]_q+[h_{j+1}+m-1]_q[l_i+m-1]_q[l_j-h_i]_q\\
&\hspace{4em}+[l_i+m-1]_q[l_j+m-1]_q[h_i-h_{j+1}]_q+[l_j+m-1]_q[h_i+m-1]_q[h_{j+1}-l_i]_q\\
&=0. 
\end{align*}

\noindent For $1 \leq i \leq r-1$, we set $a=h_{i+1}+m-1, b=l_i+m-1, c=h_i+m-1$ in Lemma \ref{lem:[n]_q}(ii). This yields 
\begin{equation*}
[h_{i+1}+m-1]_q[l_i-h_i]_q+[l_i+m-1]_q[h_i-h_{i+1}]_q+[h_i+m-1]_q[h_{i+1}-l_i]_q=0. 
\end{equation*}

\noindent By Lemma \ref{lem:catprod}, we have that $xy \star \nabla^{(m)}_n-\nabla^{(m)}_n \star xy$ is a linear combination of words in $\Cat_{n+1}$. The result follows. 
\end{proof}

\begin{corollary}\label{cor:xycommutexCny}\rm
For $n \geq 1$, 
\begin{equation*}
xy \star xC_{n-1}y=xC_{n-1}y \star xy. 
\end{equation*}
\end{corollary}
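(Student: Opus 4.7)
The plan is to derive this corollary directly from the general commutation result just established in Proposition \ref{prop:xycommute}, together with the identification of $\nabla^{(0)}_n$ as $xC_{n-1}y$ recorded in Lemma \ref{lem:N^0_n}. First I would specialize $m=0$ in Proposition \ref{prop:xycommute}, obtaining $xy \star \nabla^{(0)}_n = \nabla^{(0)}_n \star xy$ for every $n \geq 1$. Then I would invoke Lemma \ref{lem:N^0_n}, which tells us that $\nabla^{(0)}_n = xC_{n-1}y$, and substitute on both sides to obtain the desired identity.

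Since Proposition \ref{prop:xycommute} is valid uniformly for all $m \in \mZ$, and in particular for $m=0$, no additional computation is required. The only conceptual step is recognizing that the element $xC_{n-1}y$ of interest here is precisely the $m=0$ specialization of the family $\{\nabla^{(m)}_n\}$ studied in the previous section.

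There is no genuine obstacle in this proof; the substantive work has already been carried out in the proof of Proposition \ref{prop:xycommute}, which in turn relied on the expansion in Lemma \ref{lem:coeff[]v2} and the cancellations afforded by the three-term and four-term identities of Lemma \ref{lem:[n]_q}. Alternative routes, such as a direct induction on $n$ using the recurrence of Corollary \ref{cor:xCnyrecursion}, would duplicate effort already invested in the general proposition and would be strictly more laborious. Accordingly, the proof of the corollary should amount to little more than citing Proposition \ref{prop:xycommute} at $m=0$ and applying Lemma \ref{lem:N^0_n}.
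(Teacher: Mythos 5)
Your proposal is correct and matches the paper's proof exactly: the paper also obtains this corollary by setting $m=0$ in Proposition \ref{prop:xycommute} and applying Lemma \ref{lem:N^0_n}. No further comment is needed.
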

\begin{proof}
Setting $m=0$ in Proposition \ref{prop:xycommute} and applying Lemma \ref{lem:N^0_n}, we obtain the result. 
\end{proof}

\section{The elements $\{\nabla^{(0)}_n\}_{n \geq 1}$ mutually commute}
In this section, we show that the elements $\{\nabla^{(0)}_n\}_{n \geq 1}$ mutually commute with respect to the $q$-shuffle product. Recall from Lemma \ref{lem:N^0_n} that $\nabla^{(0)}_n=xC_{n-1}y$ for all $n \geq 1$.

\medskip
\noindent The following definition is for notational convenience. 

\begin{definition}\label{def:y^-1}\rm
(See \cite[Lemma 4.3]{PT}.) For $n \geq 1$ and a word $w=a_1a_2 \cdots a_n$, define 
\begin{equation*}
wy^{-1}=
\begin{cases}
0,&\hspace{1em}\text{ if }a_n=x;\\
a_1a_2 \cdots a_{n-1},&\hspace{1em}\text{ if }a_n=y.
\end{cases}
\end{equation*}
\noindent We further define $\m1y^{-1}=0$. For $v \in \mV$, we define $vy^{-1}$ in a linear way. 
\end{definition}

\begin{example}\rm
We have 
\begin{equation*}
(xxyy-6xyxy+2xyyx+3yxxy-5yxyx-4yyxx)y^{-1}=xxy-6xyx+3yxx. 
\end{equation*}
\end{example}

\noindent We make the convention that the $y^{-1}$ notation has higher priority than the $q$-shuffle product. 

\begin{example}\rm
We have 
\begin{equation*}
xy \star xyxyy^{-1}=xy \star xyx, \hspace{2em}xyy^{-1} \star xyxy=x \star xyxy. 
\end{equation*}
\end{example}

\noindent The following result about $y^{-1}$ will be useful. 

\begin{lemma}\label{lem:y^-1}\rm
(See \cite[Lemma 8.3]{PT}.) Let $v,w$ be balanced words. Then 
\begin{equation*}
(v \star w)y^{-1}=vy^{-1} \star w+v \star wy^{-1}. 
\end{equation*}
\end{lemma}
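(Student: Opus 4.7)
The plan is to use the recursive definition of $\star$ directly. Specifically, the second ``right-hand'' form given in Section~2,
\begin{equation*}
u \star v = (u \star (v_1 \cdots v_{s-1})) v_s + ((u_1 \cdots u_{r-1}) \star v) u_r \, q^{\langle u_r, v_1 \rangle + \cdots + \langle u_r, v_s \rangle},
\end{equation*}
becomes especially clean when $v$ is balanced, because the exponent $\langle u_r, v_1 \rangle + \cdots + \langle u_r, v_s \rangle$ vanishes: for any letter $u_r$ we have $\langle u_r, x \rangle + \langle u_r, y \rangle = 0$, so a balanced $v$ contributes zero total. Consequently, when both $v = v_1 \cdots v_r$ and $w = w_1 \cdots w_s$ are nontrivial balanced words, the recursion collapses to
\begin{equation*}
v \star w = (v \star (w_1 \cdots w_{s-1})) w_s + ((v_1 \cdots v_{r-1}) \star w) v_r.
\end{equation*}

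I would then apply $y^{-1}$ to this identity. Since $y^{-1}$ is linear and interacts with right-concatenation by a single letter in a very simple way --- $(A \cdot y) y^{-1} = A$ and $(A \cdot x) y^{-1} = 0$ for any $A \in \mV$ --- the first term on the right becomes $v \star (w_1 \cdots w_{s-1}) = v \star w y^{-1}$ when $w_s = y$ and becomes $0$ when $w_s = x$; in either subcase the outcome equals $v \star wy^{-1}$, by Definition~\ref{def:y^-1}. An analogous analysis of the second term, split according to whether $v_r = y$ or $v_r = x$, yields $vy^{-1} \star w$ in either subcase. Summing gives the Leibniz-type identity $(v \star w) y^{-1} = v y^{-1} \star w + v \star w y^{-1}$.

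Finally, I would dispose of the degenerate cases where $v = \m1$ or $w = \m1$ (which are balanced) separately: using $v \star \m1 = v = \m1 \star v$ and $\m1 y^{-1} = 0$, both sides of the claimed identity reduce to $wy^{-1}$ or $vy^{-1}$, respectively. The main conceptual point of the argument is the vanishing of the $q$-exponent for balanced words; once this is observed, no real obstacle remains, and the proof reduces to a careful case analysis on the two ``exposed'' last letters $v_r$ and $w_s$.
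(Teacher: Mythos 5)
Your proof is correct and is essentially the argument the paper intends: the paper's proof is the one-line ``follows from the definition of the $q$-shuffle product in Section 2,'' and your expansion via the right-hand recursion $u \star v = (u \star (v_1 \cdots v_{s-1}))v_s + ((u_1 \cdots u_{r-1}) \star v)u_r\,q^{\langle u_r,v_1\rangle+\cdots+\langle u_r,v_s\rangle}$, the vanishing of the exponent for balanced $w$, and the case analysis on the exposed last letters is exactly the computation being suppressed. No gaps; the separate treatment of the trivial-word cases is also handled correctly.
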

\begin{proof}
Follows from the definition of the $q$-shuffle product in Section 2. 
\end{proof}

\begin{lemma}\label{lem:xycommutecor}\rm
For $ m \in \mZ$ and $n \geq 1$, 
\begin{equation*}
x \star \nabla^{(m)}_n-\nabla^{(m)}_n \star x=\nabla^{(m)}_ny^{-1} \star xy-xy \star \nabla^{(m)}_ny^{-1}. 
\end{equation*}
\end{lemma}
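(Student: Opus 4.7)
The plan is to derive this identity by applying the operation $y^{-1}$ to both sides of the commutation relation from Proposition \ref{prop:xycommute}, and then using the Leibniz-type rule for $y^{-1}$ given in Lemma \ref{lem:y^-1}.

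First, I would note that $xy$ is a balanced word and that $\nabla^{(m)}_n$ is, by Definition \ref{def:N^m_n}, a linear combination of Catalan words of length $2n$, which are balanced by Definition \ref{def:Cat}. Hence both $xy$ and $\nabla^{(m)}_n$ lie in the span of balanced words, so Lemma \ref{lem:y^-1} applies (extended by linearity in the second argument) to each of the products $xy \star \nabla^{(m)}_n$ and $\nabla^{(m)}_n \star xy$.

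Next I would start from Proposition \ref{prop:xycommute}, namely $xy \star \nabla^{(m)}_n = \nabla^{(m)}_n \star xy$, and apply $y^{-1}$ to both sides. On the left, Lemma \ref{lem:y^-1} gives
\[
(xy \star \nabla^{(m)}_n)y^{-1} = (xy)y^{-1} \star \nabla^{(m)}_n + xy \star \nabla^{(m)}_n y^{-1} = x \star \nabla^{(m)}_n + xy \star \nabla^{(m)}_n y^{-1},
\]
using $(xy)y^{-1}=x$ from Definition \ref{def:y^-1}. On the right,
\[
(\nabla^{(m)}_n \star xy)y^{-1} = \nabla^{(m)}_n y^{-1} \star xy + \nabla^{(m)}_n \star (xy)y^{-1} = \nabla^{(m)}_n y^{-1} \star xy + \nabla^{(m)}_n \star x.
\]
Equating these two expressions and moving $\nabla^{(m)}_n \star x$ and $xy \star \nabla^{(m)}_n y^{-1}$ to the opposite sides yields exactly the desired identity.

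There is no real obstacle here; the only thing to double-check is that Lemma \ref{lem:y^-1} is being invoked in a setting where both factors are linear combinations of balanced words, so that the $y^{-1}$ derivation-like rule is legitimate. Once that is verified, the result is just one application of Proposition \ref{prop:xycommute} followed by Lemma \ref{lem:y^-1} on each side.
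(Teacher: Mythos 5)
Your proposal is correct and matches the paper's proof essentially verbatim: both apply $y^{-1}$ to the identity of Proposition \ref{prop:xycommute}, expand via Lemma \ref{lem:y^-1} using $(xy)y^{-1}=x$, and rearrange. The extra check that both factors are linear combinations of balanced words is a sensible (if implicit in the paper) verification.
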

\begin{proof}
In the equation of Proposition \ref{prop:xycommute}, on both sides apply $y^{-1}$ on the right and evaluate the result using Lemma \ref{lem:y^-1}. This yields 
\begin{equation*}
x \star \nabla^{(m)}_n+xy \star \nabla^{(m)}_ny^{-1}=\nabla^{(m)}_ny^{-1} \star xy+\nabla^{(m)}_n \star x. 
\end{equation*}
In this equation, we rearrange the terms to obtain the result. 
\end{proof}

\begin{lemma}\label{lem:N^0_nrecursion}\rm
(See \cite[Proposition 2.20]{ter_catalan}.) For $n \geq 1$, 
\begin{enumerate}
\item $\displaystyle \nabla^{(0)}_{n+1}y^{-1}=\frac{x \star \nabla^{(0)}_n-\nabla^{(0)}_n \star x}{q-q^{-1}};$
\item $\displaystyle \nabla^{(0)}_{n+1}y^{-1}=\frac{\nabla^{(0)}_ny^{-1} \star xy-xy \star \nabla^{(0)}_ny^{-1}}{q-q^{-1}}.$
\end{enumerate}
\end{lemma}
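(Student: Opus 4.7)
The plan is to derive both parts as direct consequences of results already established. Part (i) will follow from the recurrence of Proposition \ref{prop:N^m_nrecursion} specialized to $m=0$, and part (ii) will follow by combining part (i) with Lemma \ref{lem:xycommutecor} at $m=0$.

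For part (i), I would start from Proposition \ref{prop:N^m_nrecursion} with $m=0$, which reads
\begin{equation*}
\nabla^{(0)}_{n+1}=\frac{\bigl(x \star \nabla^{(0)}_n-\nabla^{(0)}_n \star x\bigr)y}{q-q^{-1}}.
\end{equation*}
The key point to emphasize is that the trailing $y$ on the right-hand side denotes the free (concatenation) product, not the $q$-shuffle product. Consequently, applying $y^{-1}$ on the right simply strips off this trailing letter in the sense of Definition \ref{def:y^-1}, because every word appearing in the expansion of $(x \star \nabla^{(0)}_n-\nabla^{(0)}_n \star x)y$ manifestly ends in $y$. This yields the desired formula for $\nabla^{(0)}_{n+1}y^{-1}$.

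For part (ii), I would invoke Lemma \ref{lem:xycommutecor} with $m=0$ to replace the numerator $x \star \nabla^{(0)}_n-\nabla^{(0)}_n \star x$ from part (i) with $\nabla^{(0)}_ny^{-1} \star xy-xy \star \nabla^{(0)}_ny^{-1}$. Dividing through by $q-q^{-1}$ immediately gives the claimed identity.

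There is essentially no obstacle here beyond bookkeeping; the only subtle point is the interplay between $y^{-1}$ (which is a linear operator on $\mV$) and the concatenation $(\,\cdot\,)y$ occurring in Proposition \ref{prop:N^m_nrecursion}. I would state this explicitly by noting that, thanks to Lemma \ref{lem:catprod}, the quantity $x \star \nabla^{(0)}_n-\nabla^{(0)}_n \star x$ is a linear combination of (nontrivial) Catalan words of length $2n+1$, so appending $y$ and then removing it via $y^{-1}$ is harmless. With that observation in place, both identities drop out in a line each.
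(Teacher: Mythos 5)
Your proposal is correct and follows essentially the same route as the paper: specialize Proposition \ref{prop:N^m_nrecursion} to $m=0$, apply $y^{-1}$ on the right to get (i), and then substitute via Lemma \ref{lem:xycommutecor} to get (ii). One inessential slip: the words in $x \star \nabla^{(0)}_n-\nabla^{(0)}_n \star x$ have odd length and so cannot be Catalan (Lemma \ref{lem:catprod} does not apply to them), but this does not matter, since $y^{-1}$ strips a trailing concatenated $y$ from any word by Definition \ref{def:y^-1}.
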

\begin{proof}
In \eqref{eq:N^m_nrecursion}, set $m=0$ and on both sides apply $y^{-1}$ on the right. This yields (i). Evaluating (i) using Lemma \ref{lem:xycommutecor}, we obtain (ii). 
\end{proof}

\begin{lemma}\label{lem:N^0_n+ky^-1}\rm
(See \cite[Corollary 8.5]{ter_beck}.) For $n,k \geq 1$, 
\begin{equation}\label{eq:N^0_n+ky^-1}
\nabla^{(0)}_{n+k}y^{-1}=\frac{\nabla^{(0)}_ny^{-1} \star \nabla^{(0)}_k-\nabla^{(0)}_k \star \nabla^{(0)}_ny^{-1}}{q-q^{-1}}. 
\end{equation}
\end{lemma}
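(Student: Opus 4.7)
The plan is to induct on $n$, proving the statement for all $k \geq 1$ at each stage. The $\star$-associativity together with the already-established commutativity of $xy$ with every $\nabla^{(0)}_k$ (Corollary \ref{cor:xycommutexCny}) will be the only tools beyond Lemma \ref{lem:N^0_nrecursion}.

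For the base case $n=1$, note that $\nabla^{(0)}_1 = xy$ by Lemma \ref{lem:N^m_1}, so $\nabla^{(0)}_1 y^{-1} = x$ by Definition \ref{def:y^-1}. The identity to be proved then reads
\begin{equation*}
\nabla^{(0)}_{k+1}y^{-1} = \frac{x \star \nabla^{(0)}_k - \nabla^{(0)}_k \star x}{q-q^{-1}},
\end{equation*}
which is exactly Lemma \ref{lem:N^0_nrecursion}(i).

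For the inductive step, assume \eqref{eq:N^0_n+ky^-1} holds at $n$ for every $k \geq 1$, and fix $k \geq 1$. Apply Lemma \ref{lem:N^0_nrecursion}(ii) with $n$ replaced by $n+k$ to write
\begin{equation*}
\nabla^{(0)}_{n+k+1}y^{-1} = \frac{\nabla^{(0)}_{n+k}y^{-1} \star xy - xy \star \nabla^{(0)}_{n+k}y^{-1}}{q-q^{-1}},
\end{equation*}
and then substitute the inductive hypothesis for $\nabla^{(0)}_{n+k}y^{-1}$. Distributing $\star$ and invoking $xy \star \nabla^{(0)}_k = \nabla^{(0)}_k \star xy$ (Corollary \ref{cor:xycommutexCny}) to commute the $xy$ past the $\nabla^{(0)}_k$ factors, one arrives at the four-term expression
\begin{equation*}
\frac{1}{(q-q^{-1})^2}\bigl(\nabla^{(0)}_n y^{-1} \star xy \star \nabla^{(0)}_k - xy \star \nabla^{(0)}_n y^{-1} \star \nabla^{(0)}_k - \nabla^{(0)}_k \star \nabla^{(0)}_n y^{-1} \star xy + \nabla^{(0)}_k \star xy \star \nabla^{(0)}_n y^{-1}\bigr).
\end{equation*}
On the other hand, the target expression $\frac{\nabla^{(0)}_{n+1}y^{-1} \star \nabla^{(0)}_k - \nabla^{(0)}_k \star \nabla^{(0)}_{n+1}y^{-1}}{q-q^{-1}}$, after substituting Lemma \ref{lem:N^0_nrecursion}(ii) for $\nabla^{(0)}_{n+1}y^{-1}$, expands to exactly the same four terms (again using $\star$-commutativity of $xy$ and $\nabla^{(0)}_k$ to reconcile the first and last summands). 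This matches, completing the induction.

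I do not expect any serious obstacle: the recursions in Lemma \ref{lem:N^0_nrecursion} give both a left-multiplication and a right-multiplication form for the jump from $\nabla^{(0)}_{n+k}y^{-1}$ to $\nabla^{(0)}_{n+k+1}y^{-1}$, and Corollary \ref{cor:xycommutexCny} supplies precisely the commutativity needed to align the four expanded terms. The only thing to be careful about is keeping the order of factors correct throughout the $\star$-computations, since $\star$ itself is noncommutative.
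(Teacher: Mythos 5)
Your proof is correct and follows essentially the same route as the paper: induction on $n$ with the base case given by Lemma \ref{lem:N^0_nrecursion}(i), and an inductive step that applies Lemma \ref{lem:N^0_nrecursion}(ii) twice and commutes $xy$ past $\nabla^{(0)}_k$ (the paper cites Proposition \ref{prop:xycommute}, of which your Corollary \ref{cor:xycommutexCny} is the $m=0$ case). The only cosmetic difference is that the paper phrases the step as passing from $n-1$ to $n$ rather than from $n$ to $n+1$.
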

\begin{proof}
We use induction on $n$. 

\medskip
\noindent First assume $n=1$. In Lemma \ref{lem:N^0_nrecursion}(i), substitute $n$ with $k$ and evaluate the result using Lemma \ref{lem:N^m_1}. This yields \eqref{eq:N^0_n+ky^-1}. 

\medskip
\noindent Now assume $n \geq 2$. By induction, 
\begin{equation}\label{ih:N^0_n+ky^-1}
\nabla^{(0)}_{n+k-1}y^{-1}=\frac{\nabla^{(0)}_{n-1}y^{-1} \star \nabla^{(0)}_k-\nabla^{(0)}_k \star \nabla^{(0)}_{n-1}y^{-1}}{q-q^{-1}}. 
\end{equation}

\noindent Using in order Lemma \ref{lem:N^0_nrecursion}(ii), \eqref{ih:N^0_n+ky^-1}, Proposition \ref{prop:xycommute}, and Lemma \ref{lem:N^0_nrecursion}(ii), we have 
\begin{align*}
&\nabla^{(0)}_{n+k}y^{-1}\\
&=\frac{\nabla^{(0)}_{n+k-1}y^{-1} \star xy-xy \star \nabla^{(0)}_{n+k-1}y^{-1}}{q-q^{-1}}\\
&=\frac{\left(\nabla^{(0)}_{n-1}y^{-1} \star \nabla^{(0)}_k-\nabla^{(0)}_k \star \nabla^{(0)}_{n-1}y^{-1}\right) \star xy-xy \star \left(\nabla^{(0)}_{n-1}y^{-1} \star \nabla^{(0)}_k-\nabla^{(0)}_k \star \nabla^{(0)}_{n-1}y^{-1}\right)}{(q-q^{-1})^2}\\
&=\frac{\left(\nabla^{(0)}_{n-1}y^{-1} \star xy-xy \star \nabla^{(0)}_{n-1}y^{-1}\right) \star \nabla^{(0)}_k-\nabla^{(0)}_k \star \left(\nabla^{(0)}_{n-1}y^{-1} \star xy-xy \star \nabla^{(0)}_{n-1}y^{-1}\right)}{(q-q^{-1})^2}\\
&=\frac{\nabla^{(0)}_ny^{-1} \star \nabla^{(0)}_k-\nabla^{(0)}_k \star \nabla^{(0)}_ny^{-1}}{q-q^{-1}}. 
\end{align*}
\end{proof}

\begin{lemma}\label{lem:N^0_ncommute}\rm
(See \cite[Corollary 8.4]{ter_beck}.) For $n,k \geq 1$, 
\begin{equation*}
\nabla^{(0)}_n \star \nabla^{(0)}_k=\nabla^{(0)}_k \star \nabla^{(0)}_n. 
\end{equation*}
\end{lemma}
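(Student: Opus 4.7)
The plan is to apply $y^{-1}$ on the right to the commutator $\nabla^{(0)}_n \star \nabla^{(0)}_k - \nabla^{(0)}_k \star \nabla^{(0)}_n$, show the result is $0$ by two applications of Lemma \ref{lem:N^0_n+ky^-1}, and then cancel the $y^{-1}$ by an injectivity argument.

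For the first step I would use that $\nabla^{(0)}_n$ and $\nabla^{(0)}_k$ are linear combinations of Catalan words, hence of balanced words (Definition \ref{def:Cat}); so Lemma \ref{lem:y^-1} applies bilinearly and yields
\begin{equation*}
(\nabla^{(0)}_n \star \nabla^{(0)}_k - \nabla^{(0)}_k \star \nabla^{(0)}_n)y^{-1} = \bigl(\nabla^{(0)}_n y^{-1} \star \nabla^{(0)}_k - \nabla^{(0)}_k \star \nabla^{(0)}_n y^{-1}\bigr) + \bigl(\nabla^{(0)}_n \star \nabla^{(0)}_k y^{-1} - \nabla^{(0)}_k y^{-1} \star \nabla^{(0)}_n\bigr).
\end{equation*}

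For the second step, Lemma \ref{lem:N^0_n+ky^-1} with indices $(n,k)$, and then with indices $(k,n)$, gives
\begin{equation*}
\nabla^{(0)}_n y^{-1} \star \nabla^{(0)}_k - \nabla^{(0)}_k \star \nabla^{(0)}_n y^{-1} = (q-q^{-1})\nabla^{(0)}_{n+k} y^{-1} = \nabla^{(0)}_k y^{-1} \star \nabla^{(0)}_n - \nabla^{(0)}_n \star \nabla^{(0)}_k y^{-1}.
\end{equation*}
The second parenthesis in the previous display is therefore the negative of the first, so the two contributions cancel and $(\nabla^{(0)}_n \star \nabla^{(0)}_k - \nabla^{(0)}_k \star \nabla^{(0)}_n)y^{-1} = 0$.

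For the third step I would observe that every nontrivial Catalan word ends in $y$ by Lemma \ref{lem:Catxy}, and from the recursive definition of $\star$ in Section 2 the $q$-shuffle of two words ending in $y$ is a linear combination of words ending in $y$. Consequently the commutator lies in the span of words of length $2n+2k$ ending in $y$; on that span the map $v \mapsto v y^{-1}$ is injective, since distinct words ending in $y$ drop to distinct words upon deleting the trailing $y$. Vanishing of the image therefore forces the commutator itself to be $0$. The only possible obstacle is the minor bookkeeping of the sign cancellation in the second step, but this is transparent once one writes Lemma \ref{lem:N^0_n+ky^-1} in its antisymmetric form.
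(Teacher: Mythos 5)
Your proposal is correct and follows essentially the same route as the paper: both arguments combine Lemma \ref{lem:N^0_n+ky^-1} (applied with the indices in both orders) with the derivation property of $y^{-1}$ from Lemma \ref{lem:y^-1} to show the commutator vanishes after right-multiplication by $y^{-1}$, and then cancel the $y^{-1}$ because the commutator is a linear combination of Catalan words, each ending in $y$. The only difference is bookkeeping: you apply $y^{-1}$ to the commutator first and then invoke Lemma \ref{lem:N^0_n+ky^-1}, while the paper compares the two instances of that lemma first and then applies Lemma \ref{lem:y^-1}.
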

\begin{proof}
Swap $n$ and $k$ in \eqref{eq:N^0_n+ky^-1} and compare the result with \eqref{eq:N^0_n+ky^-1}. This yields 
\begin{equation*}
\nabla^{(0)}_ny^{-1} \star \nabla^{(0)}_k-\nabla^{(0)}_k \star \nabla^{(0)}_ny^{-1}=\nabla^{(0)}_ky^{-1} \star \nabla^{(0)}_n-\nabla^{(0)}_n \star \nabla^{(0)}_ky^{-1}. 
\end{equation*}
Rearranging the terms, we have 
\begin{equation*}
\nabla^{(0)}_ny^{-1} \star \nabla^{(0)}_k+\nabla^{(0)}_n \star \nabla^{(0)}_ky^{-1}=\nabla^{(0)}_ky^{-1} \star \nabla^{(0)}_n+\nabla^{(0)}_k \star \nabla^{(0)}_ny^{-1}. 
\end{equation*}
Evaluating the above equation using Lemma \ref{lem:y^-1}, we have 
\begin{equation*}
\left(\nabla^{(0)}_n \star \nabla^{(0)}_k\right)y^{-1}=\left(\nabla^{(0)}_k \star \nabla^{(0)}_n\right)y^{-1}. 
\end{equation*}
Using Definition \ref{def:N^m_n}, we expand out $\nabla^{(0)}_n \star \nabla^{(0)}_k$ and $\nabla^{(0)}_k \star \nabla^{(0)}_n$ and express them as linear combinations of words. Each word is Catalan by Lemma \ref{lem:catprod}. Each word ends with $y$ by Lemma \ref{lem:Catxy}. The result follows. 
\end{proof}

\section{A relation involving $\{\Delta^{(m)}_n\}_{m \in \mZ,n \in \mN}$ and $\{\nabla^{(0)}_n\}_{n \geq 1}$}
In this section, we return our attention to the elements $\{\Delta^{(m)}_n\}_{m \in \mZ,n \in \mN}$ introduced in Section 4. We will obtain a relation involving these elements and the elements $\{\nabla^{(0)}_n\}_{n \geq 1}$. This relation will be used in the next section. 

\medskip
\noindent To begin, we use Lemma \ref{lem:D&N} to reformulate Propositions \ref{prop:N^m_nrecursion}, \ref{prop:xycommute} and Lemma \ref{lem:xycommutecor} in terms of $\{\Delta^{(m)}_n\}_{m \in \mZ,n \in \mN}$. 

\begin{proposition}\label{prop:D^m_nrecursion}\rm
For $m \in \mZ$ and $n \in \mN$, 
\begin{equation}\label{eq:D^m_nrecursion}
\Delta^{(m)}_{n+1}=\frac{\left(q^m x \star \Delta^{(m)}_n-q^{-m} \Delta^{(m)}_n \star x\right)y}{q-q^{-1}}. 
\end{equation}
\end{proposition}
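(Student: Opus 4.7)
The plan is to reduce this statement to the already-established recurrence for $\nabla^{(m)}_n$ in Proposition \ref{prop:N^m_nrecursion}, exploiting the scalar relationship $\Delta^{(m)}_n=[m]_q\nabla^{(m)}_n$ from Lemma \ref{lem:D&N}. The only subtlety is that Lemma \ref{lem:D&N} and Proposition \ref{prop:N^m_nrecursion} require $n\geq 1$, whereas the present statement allows $n=0$, so the base case $n=0$ must be treated separately.

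First I would handle the case $n\geq 1$. Starting from
\begin{equation*}
\nabla^{(m)}_{n+1}=\frac{(q^m x\star\nabla^{(m)}_n-q^{-m}\nabla^{(m)}_n\star x)y}{q-q^{-1}}
\end{equation*}
of Proposition \ref{prop:N^m_nrecursion}, I multiply both sides by $[m]_q$, move $[m]_q$ inside the $q$-shuffle products using bilinearity, and then apply Lemma \ref{lem:D&N} to convert $[m]_q\nabla^{(m)}_{n+1}$ to $\Delta^{(m)}_{n+1}$ on the left and $[m]_q\nabla^{(m)}_n$ to $\Delta^{(m)}_n$ on the right. This yields the desired identity whenever $n\geq 1$.

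For the base case $n=0$, I would evaluate both sides of \eqref{eq:D^m_nrecursion} directly. On the left, $\Delta^{(m)}_1=\Delta^{(m)}(xy)\,xy$, and by Definition \ref{def:D^m(w)} together with \eqref{eq:xy01},
\begin{equation*}
\Delta^{(m)}(xy)=[m(\overline{x}+1)/2]_q\,[\overline{x}+m(\overline{y}+1)/2]_q=[m]_q[1]_q=[m]_q,
\end{equation*}
so the left side equals $[m]_q\,xy$. On the right, since $\Delta^{(m)}_0=\m1$, we have $q^m x\star\m1-q^{-m}\m1\star x=(q^m-q^{-m})x$, and dividing by $q-q^{-1}$ and appending $y$ gives $[m]_q\,xy$. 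The two sides agree.

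I do not anticipate any serious obstacle: the work has already been done in Proposition \ref{prop:N^m_nrecursion}, and the only thing to verify is that scaling by $[m]_q$ correctly absorbs the $m=0$ degeneracy (where $\nabla^{(m)}_n$ and $\Delta^{(m)}_n$ cease to be proportional via a nonzero factor, but where the recurrence still holds trivially because both sides vanish for $n\geq 1$ by Lemma \ref{lem:D^0_n} and a direct computation, and for $n=0$ the computation above continues to give $[0]_q\,xy=0$ on both sides).
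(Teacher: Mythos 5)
Your proposal is correct and follows exactly the paper's own route: the case $n\geq 1$ by combining Lemma \ref{lem:D&N} with Proposition \ref{prop:N^m_nrecursion}, and the case $n=0$ by direct computation (which the paper simply calls routine). Your extra caution about $m=0$ is harmless but unnecessary, since Lemma \ref{lem:D&N} holds for all $m\in\mZ$ and multiplying the $\nabla$-recurrence by $[m]_q$ is valid whether or not $[m]_q$ vanishes.
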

\begin{proof}
The case $n=0$ is routine, and the case $n \geq 1$ follows from Lemma \ref{lem:D&N} and Proposition \ref{prop:N^m_nrecursion}. 
\end{proof}

\begin{proposition}\label{prop:xycommuteD}\rm
For $m \in \mZ$ and $n \in \mN$, 
\begin{equation*}
xy \star \Delta^{(m)}_n=\Delta^{(m)}_n \star xy. 
\end{equation*}
\end{proposition}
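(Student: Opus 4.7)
The plan is to reduce the statement to Proposition \ref{prop:xycommute}, which already establishes the analogous identity for $\nabla^{(m)}_n$, by splitting on whether $n=0$ or $n\geq 1$.

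First I would handle the base case $n=0$ directly. By Definition \ref{def:D^m_n} we have $\Delta^{(m)}_0=\m1$, and for any $v\in\mV$ the recursive definition of $\star$ gives $\m1\star v=v\star\m1=v$. Taking $v=xy$ yields $xy\star\Delta^{(m)}_0=xy=\Delta^{(m)}_0\star xy$, as required.

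For the case $n\geq 1$, the key observation is Lemma \ref{lem:D&N}, which states that $\Delta^{(m)}_n=[m]_q\nabla^{(m)}_n$. Since the $q$-shuffle product is bilinear over $\mF$ and $[m]_q$ is a scalar, we can pull this factor outside both shuffle products, so
\begin{equation*}
xy\star\Delta^{(m)}_n=[m]_q\bigl(xy\star\nabla^{(m)}_n\bigr),\qquad
\Delta^{(m)}_n\star xy=[m]_q\bigl(\nabla^{(m)}_n\star xy\bigr).
\end{equation*}
The two parenthesized expressions are equal by Proposition \ref{prop:xycommute}, so the two sides coincide. Combining the two cases gives the proposition for all $m\in\mZ$ and $n\in\mN$.

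There is essentially no obstacle here: all the technical work has already been done in proving Proposition \ref{prop:xycommute} (via Lemmas \ref{lem:coeff[]v1} and \ref{lem:coeff[]v2} together with the identities in Lemma \ref{lem:[n]_q}). The only small subtlety is that Lemma \ref{lem:D&N} is stated only for $n\geq 1$, which is why the $n=0$ case must be treated separately; but in that case both sides trivially equal $xy$.
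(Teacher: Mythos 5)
Your proposal is correct and matches the paper's proof exactly: the paper also disposes of $n=0$ as routine and derives the case $n\geq 1$ from Lemma \ref{lem:D&N} together with Proposition \ref{prop:xycommute}. You have simply written out the details that the paper leaves implicit.
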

\begin{proof}
The case $n=0$ is routine, and the case $n \geq 1$ follows from Lemma \ref{lem:D&N} and Proposition \ref{prop:xycommute}. 
\end{proof}

\begin{lemma}\label{lem:xycommutecorD}\rm
For $m \in \mZ$ and $n \in \mN$, 
\begin{equation*}
x \star \Delta^{(m)}_n-\Delta^{(m)}_n \star x=\Delta^{(m)}_ny^{-1} \star xy-xy \star \Delta^{(m)}_ny^{-1}. 
\end{equation*}
\end{lemma}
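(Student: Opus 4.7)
The plan is to mirror exactly the proof of Lemma \ref{lem:xycommutecor}, replacing the input ingredient $\nabla^{(m)}_n$ with $\Delta^{(m)}_n$. The key commutation relation we need is now available: Proposition \ref{prop:xycommuteD} gives
\begin{equation*}
xy \star \Delta^{(m)}_n=\Delta^{(m)}_n \star xy,
\end{equation*}
which holds for all $m \in \mZ$ and all $n \in \mN$ (including $n=0$, where both sides equal $xy$).

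The strategy is to apply the operator $y^{-1}$ on the right of each side and invoke Lemma \ref{lem:y^-1}. To do so, I first note that both $xy$ and $\Delta^{(m)}_n$ are linear combinations of balanced words (for $\Delta^{(m)}_n$, each Catalan summand is balanced by Definition \ref{def:Cat}), so Lemma \ref{lem:y^-1} applies after extending bilinearly. Using $(xy)y^{-1}=x$, this gives
\begin{equation*}
(xy \star \Delta^{(m)}_n)y^{-1}=x \star \Delta^{(m)}_n+xy \star \Delta^{(m)}_n y^{-1},
\end{equation*}
\begin{equation*}
(\Delta^{(m)}_n \star xy)y^{-1}=\Delta^{(m)}_n y^{-1} \star xy+\Delta^{(m)}_n \star x.
\end{equation*}
Setting the two right-hand sides equal and rearranging yields the desired identity.

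There is essentially no obstacle here; all the substantive work was done in proving Proposition \ref{prop:xycommuteD}. The only small point to be careful about is confirming that Lemma \ref{lem:y^-1}, stated for pairs of balanced words, extends bilinearly to the situation at hand, which is immediate from the definitions of $\star$ and of $y^{-1}$.
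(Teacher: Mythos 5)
Your proof is correct, but it takes a slightly different route from the paper's. The paper disposes of this lemma in one line: for $n \geq 1$ it invokes Lemma \ref{lem:D&N} to write $\Delta^{(m)}_n=[m]_q\nabla^{(m)}_n$ and then multiplies the already-established identity of Lemma \ref{lem:xycommutecor} by $[m]_q$, treating $n=0$ as a routine separate check. You instead re-run the $y^{-1}$ argument from the proof of Lemma \ref{lem:xycommutecor} directly on the $\Delta$-level commutation relation of Proposition \ref{prop:xycommuteD}. Both arguments are sound (the paper's reduction is harmless even when $[m]_q=0$, since then both sides vanish for $n\geq 1$ by Lemma \ref{lem:D^0_n}); what yours buys is a uniform treatment of $n=0$ alongside $n\geq 1$, and independence from the proportionality constant, at the cost of repeating two displayed computations that the paper has already carried out once in the $\nabla$ setting. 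Your attention to the bilinear extension of Lemma \ref{lem:y^-1} and to the balancedness of the Catalan summands of $\Delta^{(m)}_n$ is appropriate and matches what the paper implicitly assumes.
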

\begin{proof}
The case $n=0$ is routine, and the case $n \geq 1$ follows from Lemmas \ref{lem:D&N} and \ref{lem:xycommutecor}. 
\end{proof}

\begin{lemma}\label{lem:D^m_ny^-1}\rm
For $m \in \mZ$ and $n \in \mN$, 
\begin{enumerate}
\item $\displaystyle \Delta^{(m)}_{n+1}y^{-1}=[m]_q\sum_{k=0}^n q^{-mk} \nabla^{(0)}_{k+1}y^{-1} \star \Delta^{(m)}_{n-k};$
\item $\displaystyle \Delta^{(m)}_{n+1}y^{-1}=[m]_q\sum_{k=0}^n q^{mk} \Delta^{(m)}_{n-k} \star \nabla^{(0)}_{k+1}y^{-1}.$
\end{enumerate}
\end{lemma}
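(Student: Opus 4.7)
The plan is to prove both parts by induction on $n$, handling each part independently since the arguments are symmetric.

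For the base case $n=0$, verify directly. By Lemma \ref{lem:D&N} and Lemma \ref{lem:N^m_1} we have $\Delta^{(m)}_1=[m]_q xy$, so $\Delta^{(m)}_1 y^{-1}=[m]_q x$. On the other hand, Lemma \ref{lem:N^0_n} gives $\nabla^{(0)}_1=xy$, hence $\nabla^{(0)}_1 y^{-1}=x$, and since $\Delta^{(m)}_0=\mathbbm{1}$ the right-hand sides of both (i) and (ii) also equal $[m]_q x$.

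For the inductive step of (i), start from the recurrence of Proposition \ref{prop:D^m_nrecursion}; applying $y^{-1}$ on the right gives
\begin{equation*}
(q-q^{-1})\Delta^{(m)}_{n+1}y^{-1}=q^m x\star\Delta^{(m)}_n-q^{-m}\Delta^{(m)}_n\star x.
\end{equation*}
Split $q^m=q^{-m}+(q^m-q^{-m})=q^{-m}+(q-q^{-1})[m]_q$ on the first term and apply Lemma \ref{lem:xycommutecorD} to convert $x\star\Delta^{(m)}_n-\Delta^{(m)}_n\star x$ into $\Delta^{(m)}_n y^{-1}\star xy-xy\star\Delta^{(m)}_n y^{-1}$. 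The outcome is
\begin{equation*}
\Delta^{(m)}_{n+1}y^{-1}=[m]_q\,\nabla^{(0)}_1 y^{-1}\star\Delta^{(m)}_n+\frac{q^{-m}\bigl(\Delta^{(m)}_n y^{-1}\star xy-xy\star\Delta^{(m)}_n y^{-1}\bigr)}{q-q^{-1}},
\end{equation*}
using $\nabla^{(0)}_1 y^{-1}=x$. Now apply the induction hypothesis to $\Delta^{(m)}_n y^{-1}$, use Proposition \ref{prop:xycommuteD} to move $xy$ past each $\Delta^{(m)}_{n-1-k}$, and then invoke Lemma \ref{lem:N^0_nrecursion}(ii) to recognize each $(\nabla^{(0)}_{k+1}y^{-1}\star xy-xy\star\nabla^{(0)}_{k+1}y^{-1})/(q-q^{-1})$ as $\nabla^{(0)}_{k+2}y^{-1}$. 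After a single index shift $k\mapsto k+1$, the surviving $q^{-m}$ prefactor combines with the extracted $q^m$ from the index shift, leaving precisely the $k\geq 1$ part of the desired sum; the leading $k=0$ term is the already isolated $[m]_q\,\nabla^{(0)}_1 y^{-1}\star\Delta^{(m)}_n$.

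Part (ii) is handled by the mirror-image argument: split $q^{-m}=q^m-(q-q^{-1})[m]_q$ on the second term of the same recurrence, so that the isolated piece is $[m]_q\,\Delta^{(m)}_n\star\nabla^{(0)}_1 y^{-1}$ and the remaining commutator carries a $q^m$ rather than $q^{-m}$ prefactor; the induction hypothesis for (ii) then produces sums where $\Delta^{(m)}_{n-1-k}$ sits on the left, so Proposition \ref{prop:xycommuteD} again lets us slide $xy$ past it, and Lemma \ref{lem:N^0_nrecursion}(ii) finishes the job. The only genuinely delicate point in the whole argument is the bookkeeping that ensures the $q^{\pm mk}$ weights fit together after the index shift; this is what forces the choice of splitting $q^{\pm m}$ in the two different ways for (i) and (ii).
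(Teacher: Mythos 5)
Your proof is correct and uses the same ingredients as the paper's: the recurrence of Proposition \ref{prop:D^m_nrecursion} with $y^{-1}$ applied, Lemma \ref{lem:xycommutecorD}, Proposition \ref{prop:xycommuteD}, Lemma \ref{lem:N^0_nrecursion}(ii), and an index shift; your splitting $q^{m}=q^{-m}+(q-q^{-1})[m]_q$ is exactly the identity the paper uses in its final step, just deployed at the start because you run the computation from left-hand side to right-hand side instead of the reverse. This is essentially the paper's argument read in the opposite direction.
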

\begin{proof}
(i) We use induction on $n$. 

\medskip
\noindent The case $n=0$ is routine. 

\medskip
\noindent Now assume $n \geq 1$. By induction, 
\begin{equation}\label{ih:D^m_ny^-1}
\Delta^{(m)}_ny^{-1}=[m]_q\sum_{k=0}^{n-1} q^{-mk} \nabla^{(0)}_{k+1}y^{-1} \star \Delta^{(m)}_{n-1-k}. 
\end{equation}

\noindent By Lemma \ref{lem:N^0_nrecursion}(ii) and Proposition \ref{prop:xycommuteD}, 
\begin{align*}
&[m]_q\sum_{k=0}^n q^{-mk} \nabla^{(0)}_{k+1}y^{-1} \star \Delta^{(m)}_{n-k}\\
&=[m]_qx \star \Delta^{(m)}_n+[m]_q\sum_{k=1}^n q^{-mk} \nabla^{(0)}_{k+1}y^{-1} \star \Delta^{(m)}_{n-k}\\
&=[m]_qx \star \Delta^{(m)}_n+\frac{[m]_q}{q-q^{-1}}\sum_{k=1}^n q^{-mk} \left(\nabla^{(0)}_ky^{-1} \star \Delta^{(m)}_{n-k} \star xy-xy \star \nabla^{(0)}_ky^{-1} \star \Delta^{(m)}_{n-k}\right)\\
&=[m]_qx \star \Delta^{(m)}_n+\frac{q^{-m}[m]_q}{q-q^{-1}}\sum_{k=0}^{n-1} q^{-mk} \left(\nabla^{(0)}_{k+1}y^{-1} \star \Delta^{(m)}_{n-k-1} \star xy-xy \star \nabla^{(0)}_{k+1}y^{-1} \star \Delta^{(m)}_{n-k-1}\right)\\
&=[m]_qx \star \Delta^{(m)}_n+\frac{q^{-m}}{q-q^{-1}} \left(\Delta^{(m)}_ny^{-1} \star xy-xy \star \Delta^{(m)}_ny^{-1}\right), 
\end{align*}
where the last step follows by \eqref{ih:D^m_ny^-1}. 

\noindent Applying Lemma \ref{lem:xycommutecorD} to the above result, we have 
\begin{align*}
&[m]_q\sum_{k=0}^n q^{-mk} \nabla^{(0)}_{k+1}y^{-1} \star \Delta^{(m)}_{n-k}\\
&=[m]_qx \star \Delta^{(m)}_n+\frac{q^{-m}}{q-q^{-1}} \big(x \star \Delta^{(m)}_n-\Delta^{(m)}_n \star x\big)\\
&=\frac{q^m x \star \Delta^{(m)}_n-q^{-m} \Delta^{(m)}_n \star x}{q-q^{-1}}\\
&=\Delta^{(m)}_{n+1}y^{-1}, 
\end{align*}
where the last step follows by \eqref{eq:D^m_nrecursion}. 

\medskip
\noindent (ii) Similar to the proof of (i). 
\end{proof}

\noindent The following definition is for notational convenience. 

\begin{definition}\label{def:genfunDN}\rm
For $m \in \mZ$, define the generating function 
\begin{equation*}
\Delta^{(m)}(t)=\sum_{n \in \mN}\Delta^{(m)}_nt^n. 
\end{equation*}
We also define the generating function 
\begin{equation*}
\nabla^{(0)}(t)=\sum_{n=1}^\infty\nabla^{(0)}_nt^n. 
\end{equation*}
\end{definition}

\begin{proposition}\label{prop:D^m_-(t)}\rm
For $m \in \mZ$, 
\begin{enumerate}
\item $\Delta^{(m)}(t)y^{-1}=q^m[m]_q \nabla^{(0)}(q^{-m}t)y^{-1} \star \Delta^{(m)}(t);$
\item $\Delta^{(m)}(t)y^{-1}=q^{-m}[m]_q \Delta^{(m)}(t) \star \nabla^{(0)}(q^mt)y^{-1}.$
\end{enumerate}
\end{proposition}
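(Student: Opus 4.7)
The plan is to translate Lemma \ref{lem:D^m_ny^-1} from a statement about individual coefficients into a statement about generating functions; the proposition is essentially the Cauchy-product repackaging of that lemma.

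First I would note that $\Delta^{(m)}_0 y^{-1} = \m1 y^{-1} = 0$, so applying $y^{-1}$ termwise to Definition \ref{def:genfunDN} gives
\begin{equation*}
\Delta^{(m)}(t) y^{-1} = \sum_{n \geq 0} \Delta^{(m)}_{n+1} y^{-1}\, t^{n+1}.
\end{equation*}
Substituting Lemma \ref{lem:D^m_ny^-1}(i) into the right-hand side yields
\begin{equation*}
\Delta^{(m)}(t) y^{-1} = [m]_q \sum_{n \geq 0} \sum_{k=0}^{n} q^{-mk}\, \nabla^{(0)}_{k+1} y^{-1} \star \Delta^{(m)}_{n-k}\, t^{n+1}.
\end{equation*}

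Next I would recognize the double sum as a Cauchy product. Setting $j = n-k$ and using that $\star$ is bilinear,
\begin{equation*}
\Delta^{(m)}(t) y^{-1} = [m]_q \left(\sum_{k \geq 0} q^{-mk}\, t^{k+1}\, \nabla^{(0)}_{k+1} y^{-1}\right) \star \left(\sum_{j \geq 0} t^j\, \Delta^{(m)}_j\right).
\end{equation*}
The second factor is $\Delta^{(m)}(t)$. For the first factor I would reindex by $\ell = k+1$, obtaining
\begin{equation*}
\sum_{k \geq 0} q^{-mk}\, t^{k+1}\, \nabla^{(0)}_{k+1} y^{-1} = q^m \sum_{\ell \geq 1} (q^{-m}t)^{\ell}\, \nabla^{(0)}_{\ell} y^{-1} = q^m\, \nabla^{(0)}(q^{-m}t) y^{-1},
\end{equation*}
which assembles into part (i). Part (ii) follows by the same argument applied to Lemma \ref{lem:D^m_ny^-1}(ii), with the factor $q^{mk}$ producing $\nabla^{(0)}(q^m t)y^{-1}$ on the right side of the $\star$-product instead of the left.

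There is no real obstacle here; the only care needed is the bookkeeping of the index shift $\ell = k+1$, which is responsible for the prefactor $q^{\pm m}$ in front of $[m]_q$. The identity $y^{-1}$-commutes with multiplication by the formal variable $t$ and with bilinear combinations over $\star$, so all the operations are formally legitimate at the level of power series in $t$ with coefficients in $\mV$.
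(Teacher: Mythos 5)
Your proposal is correct and is exactly the paper's argument: the paper's proof of Proposition \ref{prop:D^m_-(t)} consists of the single sentence that it is Lemma \ref{lem:D^m_ny^-1} expressed in terms of generating functions, and you have simply carried out that translation explicitly, with the index shift $\ell=k+1$ correctly accounting for the prefactors $q^{\pm m}$.
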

\begin{proof}
This is Lemma \ref{lem:D^m_ny^-1} expressed in terms of generating functions. 
\end{proof}
\noindent Referring to Proposition \ref{prop:D^m_-(t)}, part (ii) will not be used later in the paper. It is included for the sake of completeness. 

\section{An exponential formula}
Let $m \in \mZ$. In this section, we first obtain an exponential formula relating $\Delta^{(m)}(t)$ and $\nabla^{(0)}(t)$. This formula is shown in Theorem \ref{thm:D=exp}. Using this formula, we obtain multiple corollaries that confirm our main results stated in Section 2. 

\begin{lemma}\label{lem:genfunrecursion1}\rm
We have 
\begin{equation*}
\nabla^{(0)}(t)y^{-1}=tx+\frac{tx \star \nabla^{(0)}(t)-\nabla^{(0)}(t) \star tx}{q-q^{-1}}. 
\end{equation*}
\end{lemma}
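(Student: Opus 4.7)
The plan is to prove this simply by extracting the coefficient of $t^{n+1}$ on both sides and reducing to the already-established recurrence in Lemma \ref{lem:N^0_nrecursion}(i).

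First I would split the generating function $\nabla^{(0)}(t)y^{-1} = \sum_{n=1}^\infty \nabla^{(0)}_n y^{-1}\, t^n$ by isolating the $n=1$ term. By Lemma \ref{lem:N^m_1} we have $\nabla^{(0)}_1 = xy$, so $\nabla^{(0)}_1 y^{-1} = x$, which produces the leading $tx$ on the right-hand side of the claim. This accounts for the ``inhomogeneous'' term that is not captured by the recurrence.

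Next, for the tail $\sum_{n=2}^\infty \nabla^{(0)}_n y^{-1}\, t^n = \sum_{n=1}^\infty \nabla^{(0)}_{n+1} y^{-1}\, t^{n+1}$, I would apply Lemma \ref{lem:N^0_nrecursion}(i) term by term, which gives
\begin{equation*}
\sum_{n=1}^\infty \nabla^{(0)}_{n+1} y^{-1}\, t^{n+1} = \sum_{n=1}^\infty \frac{x \star \nabla^{(0)}_n - \nabla^{(0)}_n \star x}{q-q^{-1}}\, t^{n+1}.
\end{equation*}
Pulling the factor $t$ inside each $\star$ and recognizing $\sum_{n=1}^\infty \nabla^{(0)}_n t^n = \nabla^{(0)}(t)$, the right-hand side becomes $\dfrac{tx \star \nabla^{(0)}(t) - \nabla^{(0)}(t) \star tx}{q-q^{-1}}$. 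Adding back the $tx$ term from the first step yields the claim.

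There is no real obstacle here: the bulk of the work has already been done in Lemma \ref{lem:N^0_nrecursion}, and the only subtlety is bookkeeping the $n=1$ boundary term, which arises because $\nabla^{(0)}(t)$ has no constant term while the recurrence only produces $\nabla^{(0)}_n y^{-1}$ for $n \geq 2$ from $\nabla^{(0)}_{n-1}$ with $n-1 \geq 1$.
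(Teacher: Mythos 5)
Your proof is correct and is essentially the paper's own argument: the paper simply states that the lemma is Lemma \ref{lem:N^0_nrecursion}(i) expressed in terms of generating functions, and you have supplied the straightforward bookkeeping (the $n=1$ term giving $tx$, then summing the recurrence over $n \geq 1$) that this translation entails.
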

\begin{proof}
This is Lemma \ref{lem:N^0_nrecursion}(i) expressed in terms of generating functions. 
\end{proof}

\noindent We have a comment on Proposition \ref{prop:D^m_nrecursion}. In \eqref{eq:D^m_nrecursion}, on both sides apply $y^{-1}$ on the right. This shows that for $m \in \mZ$ and $n \in \mN$,  
\begin{equation}\label{eq:D^m_ny^-1recursion}
\Delta^{(m)}_{n+1}y^{-1}=\frac{q^m x \star \Delta^{(m)}_n-q^{-m} \Delta^{(m)}_n \star x}{q-q^{-1}}. 
\end{equation}

\begin{lemma}\label{lem:genfunrecursion2}\rm
For $m \in \mZ$, 
\begin{equation*}
\Delta^{(m)}(t)y^{-1}=\frac{q^m tx \star \Delta^{(m)}(t)-q^{-m} \Delta^{(m)}(t) \star tx}{q-q^{-1}}. 
\end{equation*}
\end{lemma}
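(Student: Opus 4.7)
The plan is to recognize this lemma as the generating-function reformulation of equation \eqref{eq:D^m_ny^-1recursion}, which was obtained by applying $y^{-1}$ on the right of both sides of the recurrence in Proposition \ref{prop:D^m_nrecursion}. So first I would invoke \eqref{eq:D^m_ny^-1recursion}, which states that for $m \in \mZ$ and $n \in \mN$,
\begin{equation*}
\Delta^{(m)}_{n+1}y^{-1}=\frac{q^m x \star \Delta^{(m)}_n-q^{-m} \Delta^{(m)}_n \star x}{q-q^{-1}}.
\end{equation*}

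Next I would multiply both sides by $t^{n+1}$ and sum over $n \in \mN$. On the left side, the sum is $\sum_{n \in \mN} \Delta^{(m)}_{n+1}y^{-1}\, t^{n+1} = \sum_{n=1}^\infty \Delta^{(m)}_n y^{-1}\, t^n$. To reconcile this with $\Delta^{(m)}(t)y^{-1} = \sum_{n \in \mN} \Delta^{(m)}_n y^{-1}\, t^n$, I would observe that $\Delta^{(m)}_0 = \m1$, so by Definition \ref{def:y^-1} we have $\Delta^{(m)}_0 y^{-1} = \m1 y^{-1} = 0$. Therefore the $n=0$ term in $\Delta^{(m)}(t)y^{-1}$ vanishes and the left side indeed equals $\Delta^{(m)}(t)y^{-1}$.

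On the right side, the factor $t$ may be absorbed into the single $x$ on each end of the $q$-shuffle products (since $\star$ is $\mF[t]$-bilinear when $t$ is a formal scalar), turning $x$ into $tx$. The remaining $t^n$ combines with $\Delta^{(m)}_n$ to give $\Delta^{(m)}(t) = \sum_{n \in \mN}\Delta^{(m)}_n t^n$ by Definition \ref{def:genfunDN}. This produces
\begin{equation*}
\frac{q^m tx \star \Delta^{(m)}(t)-q^{-m}\Delta^{(m)}(t) \star tx}{q-q^{-1}},
\end{equation*}
as required.

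There is no real obstacle here; the argument is a bookkeeping exercise in repackaging a term-by-term identity as a single identity of formal power series. The only subtlety worth stating carefully is why the $n=0$ contribution on the left side is consistent, which is handled by the fact that $\m1 y^{-1} = 0$.
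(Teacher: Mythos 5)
Your proposal is correct and matches the paper's proof, which simply states that the lemma is \eqref{eq:D^m_ny^-1recursion} expressed in terms of generating functions; you have just written out the bookkeeping explicitly, including the correct observation that $\Delta^{(m)}_0 y^{-1} = \m1 y^{-1} = 0$ reconciles the $n=0$ term.
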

\begin{proof}
This is \eqref{eq:D^m_ny^-1recursion} expressed in terms of generating functions. 
\end{proof}

\begin{lemma}\label{lem:dD/dtlem1}\rm
For $m \in \mZ$, 
\begin{equation*}
\left(\frac{d}{dt}\Delta^{(m)}(t)\right)y^{-1}=t^{-1}\Delta^{(m)}(t)y^{-1}+\frac{q^m tx \star \left(\frac{d}{dt}\Delta^{(m)}(t)\right)-q^{-m} \left(\frac{d}{dt}\Delta^{(m)}(t)\right) \star tx}{q-q^{-1}}. 
\end{equation*}
\end{lemma}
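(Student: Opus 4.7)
The plan is to differentiate the identity of Lemma \ref{lem:genfunrecursion2} with respect to $t$ and then recognize one of the resulting pieces as $t^{-1}\Delta^{(m)}(t)y^{-1}$ via another application of Lemma \ref{lem:genfunrecursion2}.

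More concretely, I would start from
\begin{equation*}
\Delta^{(m)}(t)y^{-1}=\frac{q^m tx \star \Delta^{(m)}(t)-q^{-m} \Delta^{(m)}(t) \star tx}{q-q^{-1}}
\end{equation*}
and apply $\frac{d}{dt}$ to both sides. On the left, since the operation $(\,\cdot\,)y^{-1}$ is $\mF$-linear and acts only on the coefficients of the series (which are elements of $\mV$ independent of $t$), differentiation and $y^{-1}$ commute, yielding $\bigl(\frac{d}{dt}\Delta^{(m)}(t)\bigr)y^{-1}$. On the right, the $q$-shuffle product is $\mF$-bilinear, so the usual product rule applies: the derivative of $tx \star \Delta^{(m)}(t)$ is $x \star \Delta^{(m)}(t) + tx \star \frac{d}{dt}\Delta^{(m)}(t)$, and similarly for $\Delta^{(m)}(t) \star tx$.

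Collecting terms, the derivative of the right-hand side splits as
\begin{equation*}
\frac{q^m x \star \Delta^{(m)}(t)-q^{-m} \Delta^{(m)}(t) \star x}{q-q^{-1}}+\frac{q^m tx \star \bigl(\frac{d}{dt}\Delta^{(m)}(t)\bigr)-q^{-m} \bigl(\frac{d}{dt}\Delta^{(m)}(t)\bigr) \star tx}{q-q^{-1}}.
\end{equation*}
The first summand is exactly $t^{-1}\Delta^{(m)}(t)y^{-1}$: indeed, dividing both sides of Lemma \ref{lem:genfunrecursion2} by $t$ (equivalently, factoring $t$ out of each $tx$) gives precisely that expression. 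Combining both sides yields the claimed identity.

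I do not anticipate any real obstacle here; the only subtlety is ensuring that differentiation in $t$ passes through both the $q$-shuffle product and the $y^{-1}$ operator, which is immediate from bilinearity and linearity, respectively. The computation is a one-line application of the product rule followed by a single substitution using Lemma \ref{lem:genfunrecursion2}.
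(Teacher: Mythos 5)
Your proposal is correct and follows exactly the paper's own argument: differentiate the identity of Lemma \ref{lem:genfunrecursion2} using the product rule, then identify the term $\frac{q^m x \star \Delta^{(m)}(t)-q^{-m}\Delta^{(m)}(t)\star x}{q-q^{-1}}$ with $t^{-1}\Delta^{(m)}(t)y^{-1}$ via a second application of that lemma. No gaps.
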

\begin{proof}
In the equation of Lemma \ref{lem:genfunrecursion2}, on both sides take the derivative with respect to $t$ and evaluate the result using the product rule. This yields 
\begin{align*}
\left(\frac{d}{dt}\Delta^{(m)}(t)\right)y^{-1}&=\frac{q^m x \star \Delta^{(m)}(t)-q^{-m} \Delta^{(m)}(t) \star x}{q-q^{-1}}\\
&\hspace{4em}+\frac{q^m tx \star \left(\frac{d}{dt}\Delta^{(m)}(t)\right)-q^{-m} \left(\frac{d}{dt}\Delta^{(m)}(t)\right) \star tx}{q-q^{-1}}. 
\end{align*}
Evaluate this equation using Lemma \ref{lem:genfunrecursion2} to obtain the result. 
\end{proof}

\begin{lemma}\label{lem:dD/dtlem2}\rm
For $m \in \mZ$, 
\begin{enumerate}
\item $\displaystyle \Big(\nabla^{(0)}(q^m t) \star \Delta^{(m)}(t)\Big)y^{-1}=q^m tx \star \Delta^{(m)}(t)\\
~\hspace{4em}+\frac{q^m}{q-q^{-1}}tx \star \nabla^{( 0)}(q^m t) \star \Delta^{(m)}(t)-\frac{q^{-m}}{q-q^{-1}}\nabla^{(0)}(q^m t) \star \Delta^{(m)}(t) \star tx;$
\item $\displaystyle \Big(\nabla^{(0)}(q^{-m} t) \star \Delta^{(m)}(t)\Big)y^{-1}=q^{-m} \Delta^{(m)}(t) \star tx\\
~\hspace{4em}+\frac{q^m}{q-q^{-1}}tx \star \nabla^{(0)}(q^{-m} t) \star \Delta^{(m)}(t)-\frac{q^{-m}}{q-q^{-1}}\nabla^{(0)}(q^{-m} t) \star \Delta^{(m)}(t) \star tx.$
\end{enumerate}
\end{lemma}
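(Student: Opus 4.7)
The plan is to expand the left-hand sides of (i) and (ii) using Lemma \ref{lem:y^-1}, which distributes $y^{-1}$ over a $q$-shuffle product, and then substitute the generating-function recurrences from Lemma \ref{lem:genfunrecursion1} (with $t$ rescaled by $q^{\pm m}$) and Lemma \ref{lem:genfunrecursion2}. In each case, the result should collapse to the desired three-term expression after terms cancel.

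For part (i), I would write
\[
\bigl(\nabla^{(0)}(q^m t) \star \Delta^{(m)}(t)\bigr)y^{-1}=\nabla^{(0)}(q^m t)y^{-1} \star \Delta^{(m)}(t)+\nabla^{(0)}(q^m t) \star \Delta^{(m)}(t)y^{-1},
\]
then replace $\nabla^{(0)}(q^m t)y^{-1}$ using Lemma \ref{lem:genfunrecursion1} with $t\mapsto q^m t$ and $\Delta^{(m)}(t)y^{-1}$ using Lemma \ref{lem:genfunrecursion2}. The two ``cross'' terms involving $\nabla^{(0)}(q^m t) \star tx \star \Delta^{(m)}(t)$ arise with opposite signs and equal coefficients $\pm q^m/(q-q^{-1})$, so they cancel, yielding exactly the claimed identity.

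For part (ii), the analogous substitution with $t\mapsto q^{-m}t$ in Lemma \ref{lem:genfunrecursion1} gives the three target terms together with two unwanted contributions: a first term of the form $q^{-m}tx \star \Delta^{(m)}(t)$ rather than the required $q^{-m}\Delta^{(m)}(t) \star tx$, and a leftover $[m]_q\, \nabla^{(0)}(q^{-m}t) \star tx \star \Delta^{(m)}(t)$, since the two cross-term coefficients are now $q^m/(q-q^{-1})$ and $-q^{-m}/(q-q^{-1})$, whose sum is $[m]_q$ rather than zero. The main obstacle is to show these two unwanted terms cancel each other. To do this, I would rewrite the commutator $tx \star \nabla^{(0)}(q^{-m}t)-\nabla^{(0)}(q^{-m}t)\star tx$ in terms of $\nabla^{(0)}(q^{-m}t)y^{-1}$ and $tx$ via Lemma \ref{lem:genfunrecursion1} again, then use Proposition \ref{prop:D^m_-(t)}(i) to recognize $q^m[m]_q\,\nabla^{(0)}(q^{-m}t)y^{-1} \star \Delta^{(m)}(t)$ as $\Delta^{(m)}(t)y^{-1}$, and finally apply Lemma \ref{lem:genfunrecursion2} to rewrite the latter as a combination of $tx \star \Delta^{(m)}(t)$ and $\Delta^{(m)}(t)\star tx$. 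Using the scalar identity $q^m-[m]_q(q-q^{-1})=q^{-m}$, the two excess contributions collapse to $q^{-m}\Delta^{(m)}(t) \star tx-q^{-m}tx \star \Delta^{(m)}(t)$, which exactly offsets the discrepancy and produces the required right-hand side.
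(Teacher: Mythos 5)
Your proposal is correct and follows essentially the same route as the paper: part (i) is the direct expansion via Lemma \ref{lem:y^-1} together with Lemmas \ref{lem:genfunrecursion1} (at $q^{m}t$) and \ref{lem:genfunrecursion2}, and part (ii) handles the residual commutator $[m]_q\bigl(\nabla^{(0)}(q^{-m}t)\star tx-tx\star\nabla^{(0)}(q^{-m}t)\bigr)\star\Delta^{(m)}(t)$ exactly as the paper does, by re-expressing it through Lemma \ref{lem:genfunrecursion1}, Proposition \ref{prop:D^m_-(t)}(i), and Lemma \ref{lem:genfunrecursion2}. The only difference is presentational (you track the excess terms directly where the paper sets $\Psi(t)$ equal to the difference of the two sides and shows $\Psi(t)=0$).
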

\begin{proof}
(i) Apply Lemma \ref{lem:y^-1} to the left-hand side and evaluate the result using Lemmas \ref{lem:genfunrecursion1} and \ref{lem:genfunrecursion2}. 

\medskip
\noindent (ii) Let $\Psi(t)$ denote the left-hand side minus the right-hand side. 

\medskip
\noindent Apply Lemma \ref{lem:y^-1} to $\Psi(t)$ and evaluate the result using Lemmas \ref{lem:genfunrecursion1} and \ref{lem:genfunrecursion2}. This yields 
\begin{align*}
&\Psi(t)=q^{-m}tx \star \Delta^{(m)}(t)-q^{-m}\Delta^{(m)}(t) \star tx\\
&\hspace{6em}+[m]_q\Big(\nabla^{(0)}(q^{-m}t) \star tx-tx \star \nabla^{(0)}(q^{-m}t)\Big) \star \Delta^{(m)}(t). 
\end{align*}

\noindent Evaluate the above big parenthesis using Lemma \ref{lem:genfunrecursion1}. This yields 
\begin{align*}
\Psi(t)&=q^{-m}tx \star \Delta^{(m)}(t)-q^{-m}\Delta^{(m)}(t) \star tx+(q^m-q^{-m})\Big(tx-q^m\nabla^{(0)}(q^{-m}t)y^{-1}\Big) \star \Delta^{(m)}(t)\\
&=q^mtx \star \Delta^{(m)}(t)-q^{-m}\Delta^{(m)}(t) \star tx-(q-q^{-1})q^m[m]_q\nabla^{(0)}(q^{-m}t)y^{-1} \star \Delta^{(m)}(t). 
\end{align*}

\noindent In the above result, evaluate the first two terms using Lemma \ref{lem:genfunrecursion2} and the third term using Proposition \ref{prop:D^m_-(t)}(i). This yields 
\begin{equation*}
\Psi(t)=(q-q^{-1})\Delta^{(m)}(t)y^{-1}-(q-q^{-1})\Delta^{(m)}(t)y^{-1}=0. 
\end{equation*}
\end{proof}

\begin{lemma}\label{lem:dD/dt}\rm
For $m \in \mZ$, 
\begin{equation*}
\frac{d}{dt}\Delta^{(m)}(t)=\frac{\nabla^{(0)}(q^m t)-\nabla^{(0)}(q^{-m}t)}{(q-q^{-1})t} \star \Delta^{(m)}(t). 
\end{equation*}
\end{lemma}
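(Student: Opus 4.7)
\textbf{Proof proposal for Lemma \ref{lem:dD/dt}.}

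The plan is to produce a uniqueness argument. I will exhibit an auxiliary element $H(t)$ in $\mV[[t]]$, show it satisfies a linear recursion of the form appearing in Lemma \ref{lem:dD/dtlem1}, and conclude $H(t)=0$ by induction combined with Lemma \ref{lem:Catxy}. Define
\begin{equation*}
G(t)=\frac{\nabla^{(0)}(q^m t)-\nabla^{(0)}(q^{-m}t)}{(q-q^{-1})t} \star \Delta^{(m)}(t), \qquad H(t)=\frac{d}{dt}\Delta^{(m)}(t)-G(t).
\end{equation*}
The goal is to show $H(t)=0$.

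First I would compute Lemma \ref{lem:dD/dtlem2}(i) minus Lemma \ref{lem:dD/dtlem2}(ii). The two sums involving $tx \star (\cdots) \star \Delta^{(m)}(t)$ and $(\cdots) \star \Delta^{(m)}(t) \star tx$ combine into expressions in $(\nabla^{(0)}(q^m t)-\nabla^{(0)}(q^{-m}t)) \star \Delta^{(m)}(t)$, while the remaining boundary terms $q^m tx \star \Delta^{(m)}(t) - q^{-m}\Delta^{(m)}(t) \star tx$ equal $(q-q^{-1})\,\Delta^{(m)}(t)y^{-1}$ by Lemma \ref{lem:genfunrecursion2}. Dividing the resulting identity by $(q-q^{-1})t$ yields
\begin{equation*}
G(t)y^{-1}=t^{-1}\Delta^{(m)}(t)y^{-1}+\frac{q^m tx \star G(t)-q^{-m} G(t) \star tx}{q-q^{-1}}.
\end{equation*}
Note that this makes sense as a power series in $t$ because $\Delta^{(m)}_0 y^{-1}=\m1 y^{-1}=0$, so $\Delta^{(m)}(t)y^{-1}$ is divisible by $t$. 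Subtracting this identity from Lemma \ref{lem:dD/dtlem1} gives the homogeneous relation
\begin{equation*}
H(t)y^{-1}=\frac{q^m tx \star H(t)-q^{-m} H(t) \star tx}{q-q^{-1}}.
\end{equation*}

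Next I would extract coefficients. Writing $H(t)=\sum_{n \in \mN} H_n t^n$, the identity becomes $H_0 y^{-1}=0$ and, for $n \geq 1$,
\begin{equation*}
H_n y^{-1}=\frac{q^m x \star H_{n-1}-q^{-m} H_{n-1} \star x}{q-q^{-1}}.
\end{equation*}
The coefficient of $t^0$ in $\frac{d}{dt}\Delta^{(m)}(t)$ is $\Delta^{(m)}_1=[m]_q xy$ by Definition \ref{def:D^m_n}, and the coefficient of $t^0$ in $G(t)$ is $[m]_q \nabla^{(0)}_1 \star \Delta^{(m)}_0=[m]_q xy$ by Lemma \ref{lem:N^m_1}, so $H_0=0$. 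By induction on $n$, the displayed recursion then gives $H_n y^{-1}=0$ for all $n \geq 0$. Finally, $H_n$ is a linear combination of Catalan words of length $2(n+1)$, and by Lemma \ref{lem:Catxy} each such nontrivial Catalan word ends in $y$; hence $y^{-1}$ is injective on the span of these words and $H_n y^{-1}=0$ forces $H_n=0$. Therefore $H(t)=0$, which is the claimed identity.

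The main obstacle, and the reason to be careful, is the bookkeeping when subtracting Lemma \ref{lem:dD/dtlem2}(i) from (ii): the $q^{\pm m}$ factors on the boundary terms must line up so that what survives is exactly $(q-q^{-1})\Delta^{(m)}(t)y^{-1}$, producing the $t^{-1}\Delta^{(m)}(t)y^{-1}$ term after division by $(q-q^{-1})t$ that matches the analogous term in Lemma \ref{lem:dD/dtlem1}. Once that matching is verified the rest is the straightforward induction described above.
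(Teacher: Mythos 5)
Your proposal is correct and follows essentially the same route as the paper: the paper defines $\Phi(t)$ (your $H(t)$), verifies the same homogeneous relation $\Phi(t)y^{-1}=\frac{q^m tx \star \Phi(t)-q^{-m}\Phi(t)\star tx}{q-q^{-1}}$ by combining Lemmas \ref{lem:genfunrecursion2}--\ref{lem:dD/dtlem2}, and then runs the same induction using the constant-term check and the fact that nontrivial Catalan words end in $y$. Your write-up merely makes the subtraction of Lemma \ref{lem:dD/dtlem2}(i) and (ii) more explicit than the paper does.
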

\begin{proof}
Define 
\begin{equation}\label{eq:defPhi}
\Phi(t)=\frac{d}{dt}\Delta^{(m)}(t)-\frac{\nabla^{(0)}(q^m t)-\nabla^{(0)}(q^{-m}t)}{(q-q^{-1})t} \star \Delta^{(m)}(t). 
\end{equation}
We will show $\Phi(t)=0$. 

\medskip
\noindent Write 
\begin{equation*}
\Phi(t)=\sum_{n \in \mN}\Phi_nt^n. 
\end{equation*}
We will show $\Phi_n=0$ for $n \in \mN$. 

\medskip
\noindent We claim that 
\begin{equation}\label{eq:Phiy^-1}
\Phi(t)y^{-1}=\frac{q^m tx \star \Phi(t)-q^{-m} \Phi(t) \star tx}{q-q^{-1}}. 
\end{equation}
To verify the claim, in \eqref{eq:Phiy^-1} eliminate $\Phi(t)$ everywhere using \eqref{eq:defPhi} and evaluate the result using Lemmas \ref{lem:genfunrecursion2}--\ref{lem:dD/dtlem2}. We have proved the claim. 

\medskip
\noindent We can now easily show $\Phi_n=0$ for $n \in \mN$. We do this by induction on $n$. 

\medskip
\noindent First we examine the constant term in \eqref{eq:defPhi}. Recall that $\Delta^{(m)}_0=\m1$, $\Delta^{(m)}_1=[m]_qxy$, $\nabla^{(0)}_1=xy$. Therefore, $\Phi_0=0$. 

\medskip
\noindent Next we show that $\Phi_n=0$ implies $\Phi_{n+1}=0$ for $n \in \mN$. 

\medskip
\noindent Let $n$ be given. We compare the coefficient of $t^{n+1}$ on both sides in \eqref{eq:Phiy^-1}. This yields 
\begin{equation*}
\Phi_{n+1}y^{-1}=\frac{q^m x \star \Phi_n-q^{-m} \Phi_n \star x}{q-q^{-1}}=0. 
\end{equation*}

\noindent In \eqref{eq:defPhi}, compare the coefficient of $t^{n+1}$ on both sides. This yields 
\begin{equation*}
\Phi_{n+1}=(n+2)\Delta^{(m)}_{n+2}-\sum_{k=0}^{n+1}[m(k+1)]_q\nabla^{(0)}_{k+1} \star \Delta^{(m)}_{n+1-k}. 
\end{equation*}

\noindent We evaluate the right-hand side of the above equation using Definitions \ref{def:D^m_n} and \ref{def:N^m_n} and expand out the result as a linear combination of words. Each word is contained in $\Cat_{n+2}$ by Lemma \ref{lem:catprod}. Each word ends with $y$ by Lemma \ref{lem:Catxy}. 

\medskip
\noindent By the above comment, 
\begin{equation*}
\Phi_{n+1}=\Phi_{n+1}y^{-1}y=0y=0. 
\end{equation*}

\noindent We have shown that $\Phi_n=0$ for $n \in \mN$, so $\Phi(t)=0$. The result follows. 
\end{proof}

\begin{theorem}\label{thm:D=exp}\rm
For $m \in \mZ$, 
\begin{equation*}
\Delta^{(m)}(t)=\exp\left(\sum_{n=1}^\infty\frac{[mn]_q}{n}\nabla^{(0)}_nt^n\right). 
\end{equation*}
In the above equation, the exponential power series is computed with respect to the $q$-shuffle product. 
\end{theorem}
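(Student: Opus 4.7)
The plan is to verify that the right-hand side satisfies the same first-order recursion as $\Delta^{(m)}(t)$, together with the same initial value at $t=0$, and then invoke uniqueness of power-series solutions. Denote $E(t) = \exp\left(\sum_{n=1}^\infty\frac{[mn]_q}{n}\nabla^{(0)}_nt^n\right)$. Observe first that $E(0) = \m1 = \Delta^{(m)}_0$, so the constant terms match.

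Next I would show that $E(t)$ satisfies the same differential equation as the one in Lemma \ref{lem:dD/dt}. The crucial preparatory point is that the elements $\{\nabla^{(0)}_n\}_{n \geq 1}$ mutually commute under the $q$-shuffle product: this was established in Lemma \ref{lem:N^0_ncommute}. Consequently, the exponent
\begin{equation*}
F(t) := \sum_{n=1}^\infty\frac{[mn]_q}{n}\nabla^{(0)}_nt^n
\end{equation*}
is a power series in a family of mutually $\star$-commuting elements, so $\exp(F(t))$ behaves formally like a classical commutative exponential, and one has $\frac{d}{dt}E(t) = F'(t) \star E(t) = E(t) \star F'(t)$. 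Using $[mn]_q = (q^{mn}-q^{-mn})/(q-q^{-1})$ and Definition \ref{def:genfunDN},
\begin{equation*}
F'(t) = \sum_{n=1}^\infty [mn]_q\nabla^{(0)}_n t^{n-1} = \frac{\nabla^{(0)}(q^m t) - \nabla^{(0)}(q^{-m} t)}{(q-q^{-1})t}.
\end{equation*}
Hence $E(t)$ satisfies
\begin{equation*}
\frac{d}{dt}E(t) = \frac{\nabla^{(0)}(q^m t) - \nabla^{(0)}(q^{-m} t)}{(q-q^{-1})t} \star E(t),
\end{equation*}
which is exactly the equation satisfied by $\Delta^{(m)}(t)$ according to Lemma \ref{lem:dD/dt}.

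It remains to argue uniqueness. Writing $\Delta^{(m)}(t) - E(t) = \sum_{n \in \mN} R_n t^n$, the two power series share the same constant coefficient $R_0 = 0$, and the differential equation above, after comparing the coefficient of $t^{n-1}$ on both sides, expresses $n R_n$ as a $\star$-linear combination of $R_0, R_1, \ldots, R_{n-1}$ involving only the known coefficients of $\nabla^{(0)}(q^{\pm m}t)$. Since the base field has characteristic zero, $n$ is invertible, and induction on $n$ forces $R_n = 0$ for all $n$. Therefore $\Delta^{(m)}(t) = E(t)$, proving the theorem.

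The main obstacle is justifying that the derivative of $\exp(F(t))$ equals $F'(t) \star E(t)$ in this noncommutative algebra. This is not automatic for arbitrary power series, and the entire proof hinges on Lemma \ref{lem:N^0_ncommute} to guarantee that the coefficients of $F(t)$ form a commuting family under $\star$, reducing the manipulation of the $\star$-exponential to the familiar commutative situation. Everything else is a routine induction on coefficients.
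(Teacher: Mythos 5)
Your proposal is correct and follows essentially the same route as the paper: both arguments hinge on Lemma \ref{lem:N^0_ncommute} to justify differentiating the $\star$-exponential and on Lemma \ref{lem:dD/dt} to identify the common first-order differential equation. The only immaterial difference is the concluding step --- the paper multiplies by the inverse exponential $\Theta(t)=\exp\left(-\sum_{n=1}^\infty\frac{[mn]_q}{n}\nabla^{(0)}_nt^n\right)$ and shows that $\Theta(t)\star\Delta^{(m)}(t)$ has zero derivative and constant term $\m1$, whereas you establish uniqueness by a coefficient-by-coefficient induction; both are routine.
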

\begin{proof}
Define 
\begin{equation*}
\Theta(t)=\exp\left(-\sum_{n=1}^\infty\frac{[mn]_q}{n}\nabla^{(0)}_nt^n\right). 
\end{equation*}

\noindent Then $\Theta(t)$ is invertible, with inverse 
\begin{equation*}
(\Theta(t))^{-1}=\exp\left(\sum_{n=1}^\infty\frac{[mn]_q}{n}\nabla^{(0)}_nt^n\right). 
\end{equation*}
We will show that $\Delta^{(m)}(t)$ is equal to the inverse of $\Theta(t)$. To do this, it suffices to show that $\Theta(t) \star \Delta^{(m)}(t)=\m1$. 

\medskip
\noindent By Lemma \ref{lem:N^0_ncommute}, the elements $\{\nabla^{(0)}_n\}_{n \geq 1}$ mutually commute. By the chain rule we have 
\begin{equation*}
\frac{d}{dt}\Theta(t)=\frac{\nabla^{(0)}(q^{-m} t)-\nabla^{(0)}(q^m t)}{(q-q^{-1})t} \star \Theta(t), 
\end{equation*}
\begin{equation}\label{eq:dTheta}
\frac{d}{dt}\Theta(t)=\Theta(t) \star \frac{\nabla^{(0)}(q^{-m} t)-\nabla^{(0)}(q^m t)}{(q-q^{-1})t}. 
\end{equation}

\noindent By the product rule and \eqref{eq:dTheta} we have 
\begin{align*}
\frac{d}{dt}\left(\Theta(t) \star \Delta^{(m)}(t)\right)&=\frac{d}{dt}\Theta(t) \star \Delta^{(m)}(t)+\Theta(t) \star \frac{d}{dt}\Delta^{(m)}(t)\\
&=\Theta(t) \star \frac{\nabla^{(0)}(q^{-m} t)-\nabla^{(0)}(q^m t)}{(q-q^{-1})t} \star \Delta^{(m)}(t)+\Theta(t) \star \frac{d}{dt}\Delta^{(m)}(t)\\
&=0, 
\end{align*}
where the last step follows by Lemma \ref{lem:dD/dt}. 

\medskip
\noindent Therefore, $\Theta(t) \star \Delta^{(m)}(t) \in \mV$. Since both $\Theta(t)$ and $\Delta^{(m)}(t)$ have constant term $\m1$, we have $\Theta(t) \star \Delta^{(m)}(t)=\m1$. The result follows. 
\end{proof}

\begin{corollary}\label{cor:comm}\rm
The elements in the set 
\begin{equation*}
\{\Delta^{(m)}_n\}_{m \in \mZ, n \in \mN} \cup \{\nabla^{(m)}_n\}_{m \in \mZ, n \geq 1} 
\end{equation*}
mutually commute with respect to the $q$-shuffle product. 
\end{corollary}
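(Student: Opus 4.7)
The plan is to reduce everything to the mutual commutativity of $\{\nabla^{(0)}_n\}_{n \geq 1}$ established in Lemma \ref{lem:N^0_ncommute}, using Theorem \ref{thm:D=exp} as the bridge.

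First I would let $\cA$ denote the subalgebra of $\mV$ generated (under $\star$) by the family $\{\nabla^{(0)}_n\}_{n \geq 1}$. By Lemma \ref{lem:N^0_ncommute}, these generators pairwise commute, so $\cA$ is a commutative subalgebra of $(\mV,\star)$.

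Next I would invoke Theorem \ref{thm:D=exp}: for each $m \in \mZ$,
\[
\Delta^{(m)}(t)=\exp\!\left(\sum_{n=1}^\infty \frac{[mn]_q}{n}\nabla^{(0)}_n t^n\right),
\]
where the exponential is computed with respect to $\star$. The argument of $\exp$ is a formal power series in $t$ whose coefficients are scalar multiples of the generators of $\cA$, hence lie in $\cA$. Because $\cA$ is commutative, the formal exponential of such a series has coefficients again in $\cA$; in particular $\Delta^{(m)}_n \in \cA$ for every $m \in \mZ$ and every $n \in \mN$.

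Finally, to extend from the $\Delta^{(m)}_n$'s to the $\nabla^{(m)}_n$'s, I would use Lemma \ref{lem:D&N}, which asserts $\Delta^{(m)}_n=[m]_q\,\nabla^{(m)}_n$ for $m \in \mZ$ and $n \geq 1$. Since $q$ is not a root of unity, $[m]_q \neq 0$ whenever $m \neq 0$, so $\nabla^{(m)}_n = [m]_q^{-1}\Delta^{(m)}_n \in \cA$ for such $m$; and for $m=0$ the element $\nabla^{(0)}_n$ lies in $\cA$ by construction. Therefore every element of $\{\Delta^{(m)}_n\}_{m \in \mZ,\, n \in \mN} \cup \{\nabla^{(m)}_n\}_{m \in \mZ,\, n \geq 1}$ belongs to the commutative algebra $\cA$, whence they all commute pairwise with respect to $\star$.

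I do not expect a genuine obstacle here: the argument is essentially the assembly of Theorem \ref{thm:D=exp}, Lemma \ref{lem:N^0_ncommute}, and Lemma \ref{lem:D&N}. The only point requiring mild care is the formal power series manipulation, namely that the exponential of an element of $\cA[[t]]$ again has coefficients in $\cA$; but this is standard, since the partial sums of the exponential lie in $\cA[[t]]$ and the coefficient of each fixed $t^n$ stabilizes after finitely many terms.
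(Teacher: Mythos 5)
Your proposal is correct and follows essentially the same route as the paper: Theorem \ref{thm:D=exp} shows each $\Delta^{(m)}_n$ is a polynomial (under $\star$) in $\nabla^{(0)}_1,\ldots,\nabla^{(0)}_n$, Lemma \ref{lem:N^0_ncommute} gives commutativity of that generating family, and Lemma \ref{lem:D&N} transfers the conclusion to the $\nabla^{(m)}_n$ for $m \neq 0$. Your explicit handling of the $m=0$ case (where $[m]_q=0$ and Lemma \ref{lem:D&N} gives no information) is a point the paper leaves implicit, and is a welcome bit of extra care.
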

\begin{proof}
By Theorem \ref{thm:D=exp}, for $m \in \mZ$ and $n \geq 1$ the element $\Delta^{(m)}_n$ is a polynomial in the elements $\nabla^{(0)}_1, \nabla^{(0)}_2, \ldots, \nabla^{(0)}_n$. The result follows by Lemmas \ref{lem:D&N} and \ref{lem:N^0_ncommute}. 
\end{proof}

\begin{corollary}\label{cor:inverse}\rm
For $m \in \mZ$, 
\begin{equation*}
\Delta^{(-m)}(t) \star \Delta^{(m)}(t)=\m1=\Delta^{(m)}(t) \star \Delta^{(-m)}(t). 
\end{equation*}
\end{corollary}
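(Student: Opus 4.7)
The plan is to derive this corollary directly from the exponential formula in Theorem \ref{thm:D=exp} together with the commutativity results established in Lemma \ref{lem:N^0_ncommute} and Corollary \ref{cor:comm}.

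First I would observe from the defining formula \eqref{eq:qint} that $[-k]_q = -[k]_q$ for all $k \in \mZ$. Applying Theorem \ref{thm:D=exp} with $m$ replaced by $-m$, and using this sign change, yields
\[
\Delta^{(-m)}(t) = \exp\left(\sum_{n=1}^\infty \frac{[-mn]_q}{n}\nabla^{(0)}_n t^n\right) = \exp\left(-\sum_{n=1}^\infty \frac{[mn]_q}{n}\nabla^{(0)}_n t^n\right).
\]
In other words, $\Delta^{(-m)}(t)$ is the exponential of the negative of the series appearing in the formula for $\Delta^{(m)}(t)$.

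Next, by Lemma \ref{lem:N^0_ncommute}, the elements $\{\nabla^{(0)}_n\}_{n \geq 1}$ mutually commute with respect to the $q$-shuffle product, so the power series $\sum_{n=1}^\infty \frac{[mn]_q}{n}\nabla^{(0)}_n t^n$ commutes with itself. Consequently, the usual identity $\exp(A) \star \exp(-A) = \mathbbm{1}$ holds in the $q$-shuffle algebra for $A$ equal to this power series. This gives the equality $\Delta^{(m)}(t) \star \Delta^{(-m)}(t) = \mathbbm{1}$. The reverse equality $\Delta^{(-m)}(t) \star \Delta^{(m)}(t) = \mathbbm{1}$ follows either by the same argument or by invoking Corollary \ref{cor:comm}, which guarantees that $\Delta^{(m)}(t)$ and $\Delta^{(-m)}(t)$ commute coefficient-by-coefficient.

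Since all the necessary ingredients (the exponential formula, the commutativity of the $\nabla^{(0)}_n$, and the behavior of $[n]_q$ under negation) are already in place, there is no real obstacle: the argument reduces to an application of the formal power series identity $\exp(A) \star \exp(-A) = \mathbbm{1}$ in a commutative setting. The only point requiring minor care is ensuring that the manipulations with formal exponentials are valid, which is justified by the fact that each $\nabla^{(0)}_n t^n$ has no constant term and the $\nabla^{(0)}_n$ commute.
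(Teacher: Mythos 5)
Your proposal is correct and matches the paper's (very terse) proof, which simply cites Theorem \ref{thm:D=exp}; you have filled in exactly the intended details, namely that $[-mn]_q=-[mn]_q$ turns $\Delta^{(-m)}(t)$ into the exponential of the negated series, and that commutativity of the $\{\nabla^{(0)}_n\}_{n\geq 1}$ justifies $\exp(A)\star\exp(-A)=\m1$. No gaps; this is the same route the paper takes.
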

\begin{proof}
Follows from Theorem \ref{thm:D=exp}. 
\end{proof}

\begin{corollary}\label{cor:D^1_n}\rm
For $n \in \mN$, 
\begin{equation*}
\Delta^{(1)}_n=(-1)^nD_n. 
\end{equation*}
\end{corollary}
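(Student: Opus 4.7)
The plan is to pass to generating functions and identify $\Delta^{(1)}(t)$ as the multiplicative $q$-shuffle inverse of $\tG(-t)$. First, I would observe that by Lemma \ref{lem:D^-1_n},
\begin{equation*}
\Delta^{(-1)}(t) = \sum_{n \in \mN} \Delta^{(-1)}_n t^n = \sum_{n \in \mN} (-1)^n \tG_n t^n = \tG(-t).
\end{equation*}
On the other hand, setting $m=1$ in Corollary \ref{cor:inverse} gives $\Delta^{(1)}(t) \star \Delta^{(-1)}(t) = \m1$, which combined with the previous identity reads
\begin{equation*}
\Delta^{(1)}(t) \star \tG(-t) = \m1.
\end{equation*}

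Next, by Definition \ref{intro:D(t)}, we have $D(t) \star \tG(t) = \m1$; replacing $t$ with $-t$ yields $D(-t) \star \tG(-t) = \m1$. Since the constant term of $\tG(-t)$ is $\m1$, it is invertible with respect to the $q$-shuffle product, and its inverse is unique. Therefore $\Delta^{(1)}(t) = D(-t)$, and comparing coefficients of $t^n$ produces the desired equality $\Delta^{(1)}_n = (-1)^n D_n$.

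There is no serious obstacle: the corollary is really just a bookkeeping consequence of Corollary \ref{cor:inverse} together with Lemma \ref{lem:D^-1_n}. The only subtlety is making sure we use the correct sign conventions in the substitution $t \mapsto -t$ and that $\tG(-t)$ (equivalently $\Delta^{(-1)}(t)$) is genuinely invertible, which is automatic since its constant term is $\m1$.
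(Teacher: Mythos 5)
Your proof is correct and follows essentially the same route as the paper: set $m=1$ in Corollary \ref{cor:inverse}, identify $\Delta^{(-1)}(t)=\tG(-t)$ via Lemma \ref{lem:D^-1_n}, and invoke the defining relation $D(t)\star\tG(t)=\m1$ together with uniqueness of the $q$-shuffle inverse. The paper's proof is just a terser statement of the same argument.
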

\begin{proof}
Setting $m=1$ in Corollary \ref{cor:inverse} gives 
\begin{equation*}
\Delta^{(-1)}(t) \star \Delta^{(1)}(t)=\m1=\Delta^{(1)}(t) \star \Delta^{(-1)}(t). 
\end{equation*}
Recall that $D(t)$ is defined to be the inverse of $\tG(t)$ and that $\Delta^{(-1)}_n=(-1)^n\tG_n$ for $n \in \mN$. The result follows. 
\end{proof}

\noindent We now explain how the above results imply Theorems \ref{thm:genfuns}, \ref{thm:genfuns4}, \ref{thm:closedform}. In order to do this, we first recall the results of Lemmas \ref{lem:D^2_n}, \ref{lem:D^-1_n}, \ref{lem:D^0_n}, \ref{lem:N^0_n} and Corollary \ref{cor:D^1_n}. For $n \in \mN$, 
\begin{equation*}
\Delta^{(-1)}_n=(-1)^n\tG_n, \hspace{6em} \Delta^{(1)}_n=(-1)^nD_n, \hspace{6em} \Delta^{(2)}_n=C_n, 
\end{equation*}
\begin{equation*}
\Delta^{(0)}_n=\delta_{n,0}\m1, \hspace{6em}  \nabla^{(0)}_{n+1}=xC_ny. 
\end{equation*}

\noindent In terms of generating functions, we have 
\begin{equation*}
\Delta^{(-1)}(t)=\tG(-t), \hspace{6em} \Delta^{(1)}(t)=D(-t), \hspace{6em} \Delta^{(2)}(t)=C(t), 
\end{equation*}
\begin{equation*}
\Delta^{(0)}(t)=\m1, \hspace{6em} \nabla^{(0)}(t)=\sum_{n=1}^{\infty}xC_{n-1}yt^n. 
\end{equation*}

\begin{proof}[Proof of Theorem \ref{thm:genfuns}]
Follows by Theorem \ref{thm:D=exp} and Corollary \ref{cor:comm}. 
\end{proof}

\begin{proof}[Proof of Theorem \ref{thm:genfuns4}]
By direct computation we have that for $n \geq 1$, 
\begin{equation}\label{eq:qmn}
(q^n)^{m-1}+(q^n)^{m-3}+\cdots+(q^n)^{1-m}=\frac{q^{mn}-q^{-mn}}{q^n-q^{-n}}. 
\end{equation}
In order to show \eqref{eq:tG=exp}, eliminate each term on the left using Theorem \ref{thm:genfuns} and evaluate the result using \eqref{eq:qmn}. 

\medskip
\noindent We can show \eqref{eq:D=exp} using the same method. 
\end{proof}

\begin{proof}[Proof of Theorem \ref{thm:closedform}]
Follows by Theorems \ref{thm:genfuns4} and \ref{thm:D=exp}. 
\end{proof}

\section{Appendix A}
For the sake of completeness, we give some additional results involving the elements $\{\nabla^{(m)}_n\}_{m \in \mZ,n \geq 1}$ and $\{\Delta^{(m)}_n\}_{m \in \mZ,n \in \mN}$. 

\medskip
\noindent The following definition will be useful. 

\begin{definition}\label{def:zeta}\rm
(See \cite[Page 5]{ter_catalan}.) Let $\zeta:\mV \to \mV$ denote the $\mF$-linear map such that 
\begin{itemize}
\item $\zeta(x)=y$ and $\zeta(y)=x$; 
\item for a word $a_1 \cdots a_n$, 
\begin{equation*}
\zeta(a_1 \cdots a_n)=\zeta(a_n) \cdots \zeta(a_1). 
\end{equation*}
\end{itemize}
\end{definition}

\noindent By the above definition, the map $\zeta$ is an antiautomorphism on the free algebra $\mV$. Moreover, by the definition of the $q$-shuffle product in Section 2, the map $\zeta$ is an antiautomorphism on the $q$-shuffle algebra $\mV$. Thus for $v,w \in \mV$ we have 
\begin{equation*}
\zeta(vw)=\zeta(w)\zeta(v), \hspace{4em} \zeta(v \star w)=\zeta(w) \star \zeta(v). 
\end{equation*}

\begin{lemma}\label{lem:zetacat}\rm
For a Catalan word $w$ the following hold: 
\begin{enumerate}
\item $\zeta(w)$ is Catalan; 
\item assuming that $w$ is nontrivial, then $\nabla^{(m)}(w)=\nabla^{(m)}(\zeta(w))$ for $m \in \mZ$; 
\item $\Delta^{(m)}(w)=\Delta^{(m)}(\zeta(w))$ for $m \in \mZ$. 
\end{enumerate}
\end{lemma}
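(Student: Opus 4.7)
The plan is to treat the three parts in sequence, with the main work being bookkeeping at the level of elevation sequences.

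For (i), I would begin by computing the elevation sequence of $\zeta(w) = b_1 b_2 \cdots b_n$, where $b_j = \zeta(a_{n-j+1})$. Since $\overline{\zeta(a)} = -\overline{a}$ and $w$ is balanced, a short telescoping calculation using $\overline{a}_1 + \cdots + \overline{a}_n = 0$ gives
\begin{equation*}
\overline{b}_1 + \overline{b}_2 + \cdots + \overline{b}_j = -(\overline{a}_n + \overline{a}_{n-1} + \cdots + \overline{a}_{n-j+1}) = \overline{a}_1 + \overline{a}_2 + \cdots + \overline{a}_{n-j}.
\end{equation*}
Thus the elevation sequence of $\zeta(w)$ is $(e_n, e_{n-1}, \ldots, e_1, e_0)$, the reversal of that of $w$. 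Since this reversed sequence still has nonnegative interior entries and vanishes at the endpoints, Lemma \ref{lem:eleCat} yields (i).

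For (iii), I would use Lemma \ref{lem:D^m(w)}. Noting that $e_{i-1} = e_i - 1$ when $a_i = x$ and $e_{i-1} = e_i + 1$ when $a_i = y$, I would rewrite
\begin{equation*}
\Delta^{(m)}(w) = \Bigl(\prod_{a_i = x}[e_i + m - 1]_q\Bigr)\Bigl(\prod_{a_i = y}[e_i + 1]_q\Bigr).
\end{equation*}
Applying the same formula to $\zeta(w)$, and translating indices via $j \leftrightarrow 2n - j + 1$ together with the reversal identity from (i), I would obtain
\begin{equation*}
\Delta^{(m)}(\zeta(w)) = \Bigl(\prod_{a_i = y}[e_i + m]_q\Bigr)\Bigl(\prod_{a_i = x}[e_i]_q\Bigr).
\end{equation*}
By Lemma \ref{lem:Ix=Iy}, for every integer $k$ the sets $\{i : a_i = x,\ e_i = k\}$ and $\{i : a_i = y,\ e_i = k-1\}$ have the same cardinality, so the multisets $\{e_i : a_i = x\}$ and $\{e_i + 1 : a_i = y\}$ coincide. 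This multiset identity immediately gives
\begin{equation*}
\prod_{a_i = x}[e_i]_q = \prod_{a_i = y}[e_i + 1]_q, \qquad \prod_{a_i = x}[e_i + m - 1]_q = \prod_{a_i = y}[e_i + m]_q,
\end{equation*}
and hence $\Delta^{(m)}(w) = \Delta^{(m)}(\zeta(w))$, settling (iii) (the trivial case $w = \m1$ being immediate since $\zeta(\m1) = \m1$).

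For (ii), when $m \neq 0$ I would simply divide the identity in (iii) by $[m]_q$ using Lemma \ref{lem:D&N(w)}. For the remaining case $m = 0$, I would observe from Definition \ref{def:N^m(w)} that
\begin{equation*}
\nabla^{(0)}(w) = \prod_{i=2}^{2n}[e_{i-1}]_q = \prod_{i=1}^{2n-1}[e_i]_q,
\end{equation*}
which depends only on the multiset $\{e_1, \ldots, e_{2n-1}\}$; since the elevation sequence of $\zeta(w)$ is the reversal of that of $w$ by (i), this multiset is unchanged and the equality follows. The main obstacle in the whole argument is the index bookkeeping in (iii), but the heavy lifting is done by Lemma \ref{lem:Ix=Iy}, which precisely matches up the elevation values attained at $x$-steps and $y$-steps.
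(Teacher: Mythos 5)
Your proof is correct and follows essentially the same route as the paper: the reversal of the elevation sequence under $\zeta$, the $x$/$y$ factorization of the product (Lemma \ref{lem:D^m(w)} in your case, Lemma \ref{lem:N^m(w)} in the paper's), and Lemma \ref{lem:Ix=Iy} to match the two multisets of elevations. The only organizational difference is that the paper proves (ii) directly and deduces (iii) via $\Delta^{(m)}(w)=[m]_q\nabla^{(m)}(w)$, whereas you prove (iii) first and deduce (ii), which forces the separate $m=0$ case that you handle correctly.
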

\begin{proof}
(i) Follows by Definitions \ref{def:Cat}. 

\medskip
\noindent (ii) Follows by Lemmas \ref{lem:N^m(w)} and \ref{lem:Ix=Iy}. 

\medskip
\noindent (iii) The case $w=\m1$ is routine, and the case $w \neq \m1$ follows from (ii) and Lemma \ref{lem:D&N(w)}. 
\end{proof}

\begin{lemma}\label{lem:zetainvariant}\rm
The map $\zeta$ fixes $\nabla^{(m)}_n$ for $m \in \mZ$ and $n \geq 1$. Moreover, $\zeta$ fixes $\Delta^{(m)}_n$ for $m \in \mZ$ and $n \in \mN$. 
\end{lemma}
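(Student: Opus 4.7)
The plan is to derive the lemma directly from Lemma \ref{lem:zetacat} together with the observation that $\zeta$ is an involution on $\mV$ that restricts to a bijection of $\Cat_n$ onto itself.

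First I would check that $\zeta$ is an involution. From Definition \ref{def:zeta}, for a single letter we have $\zeta(\zeta(x)) = x$ and $\zeta(\zeta(y)) = y$, and for a word $a_1 \cdots a_n$,
\begin{equation*}
\zeta(\zeta(a_1 \cdots a_n)) = \zeta(\zeta(a_n) \cdots \zeta(a_1)) = \zeta(\zeta(a_1)) \cdots \zeta(\zeta(a_n)) = a_1 \cdots a_n.
\end{equation*}
Together with Lemma \ref{lem:zetacat}(i) and the fact that $\zeta$ preserves length, this shows $\zeta$ restricts to a bijection $\Cat_n \to \Cat_n$ for each $n \in \mN$.

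Next I would compute $\zeta(\nabla^{(m)}_n)$ by applying $\zeta$ termwise to the defining sum in Definition \ref{def:N^m_n}:
\begin{equation*}
\zeta\bigl(\nabla^{(m)}_n\bigr) = \sum_{w \in \Cat_n} \nabla^{(m)}(w)\, \zeta(w).
\end{equation*}
Reindexing the sum via $w' = \zeta(w)$ (so $w = \zeta(w')$, using involutivity) and invoking Lemma \ref{lem:zetacat}(ii), which gives $\nabla^{(m)}(\zeta(w')) = \nabla^{(m)}(w')$, yields
\begin{equation*}
\zeta\bigl(\nabla^{(m)}_n\bigr) = \sum_{w' \in \Cat_n} \nabla^{(m)}(\zeta(w'))\, w' = \sum_{w' \in \Cat_n} \nabla^{(m)}(w')\, w' = \nabla^{(m)}_n.
\end{equation*}

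The argument for $\Delta^{(m)}_n$ is identical, using Definition \ref{def:D^m_n} and Lemma \ref{lem:zetacat}(iii) in place of (ii); the case $n = 0$ requires only the trivial observation $\zeta(\m1) = \m1$. There is no serious obstacle here: the entire content of the lemma is the combinatorial symmetry encoded in Lemma \ref{lem:zetacat}, so the proof is essentially just a reindexing once $\zeta$ is recognized as an involution on $\Cat_n$.
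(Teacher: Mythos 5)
Your proposal is correct and follows exactly the route the paper takes: its proof is the one-line citation of Definitions \ref{def:D^m_n}, \ref{def:N^m_n} and Lemma \ref{lem:zetacat}, and your write-up simply makes explicit the reindexing via the involution $\zeta$ on $\Cat_n$ that this citation implies.
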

\begin{proof}
Follows from Definitions \ref{def:D^m_n}, \ref{def:N^m_n} and Lemma \ref{lem:zetacat}. 
\end{proof}

\noindent Recall the $y^{-1}$ notation from Definition \ref{def:y^-1}. We now give a similar notation involving $x$. 

\begin{definition}\label{def:x^-1}\rm
(See \cite[Lemma 4.3]{PT}.) For $n \geq 1$ and a word $w=a_1a_2 \cdots a_n$, define 
\begin{equation*}
x^{-1}w=
\begin{cases}
a_2a_3 \cdots a_n,&\hspace{1em}\text{ if }a_1=x; \\
0,&\hspace{1em}\text{ if }a_1=y. 
\end{cases}
\end{equation*}
\noindent We further define $x^{-1}\m1=0$. For $v \in \mV$, we define $x^{-1}v$ in a linear way. 
\end{definition}

\begin{lemma}\label{lem:zetax-1y-1}\rm
For $v \in \mV$, 
\begin{equation*}
\zeta(vy^{-1})=x^{-1}\zeta(v). 
\end{equation*}
\end{lemma}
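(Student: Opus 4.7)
The plan is to verify the identity by linearity, reducing to a case analysis on a single word $w=a_1a_2\cdots a_n$ (including the trivial word as a degenerate case). Both sides are $\mF$-linear in $v$, so it suffices to prove $\zeta(wy^{-1})=x^{-1}\zeta(w)$ for each word $w$.

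First I would dispose of the trivial case $w=\m1$: by Definitions \ref{def:y^-1} and \ref{def:x^-1}, $\m1 y^{-1}=0$ and $x^{-1}\m1=0$, and $\zeta$ fixes $\m1$, so both sides vanish. Next, for $n\geq 1$, I would split on the value of the last letter $a_n$. If $a_n=y$, then $wy^{-1}=a_1\cdots a_{n-1}$, so by Definition \ref{def:zeta},
\begin{equation*}
\zeta(wy^{-1})=\zeta(a_{n-1})\cdots\zeta(a_1).
\end{equation*}
On the other hand, $\zeta(w)=\zeta(a_n)\zeta(a_{n-1})\cdots\zeta(a_1)=x\,\zeta(a_{n-1})\cdots\zeta(a_1)$ since $\zeta(y)=x$, and applying $x^{-1}$ strips the leading $x$ to give the same expression. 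If instead $a_n=x$, then $wy^{-1}=0$ by Definition \ref{def:y^-1}, while $\zeta(w)$ begins with $\zeta(x)=y$, so $x^{-1}\zeta(w)=0$ by Definition \ref{def:x^-1}; both sides vanish.

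This is essentially a direct unwinding of the definitions; there is no real obstacle. The only thing to be a little careful about is treating $\m1$ separately, since the word-reversal formula in Definition \ref{def:zeta} is stated for nontrivial words, and the conventions $\m1 y^{-1}=0$, $x^{-1}\m1=0$ are set up precisely so the identity still holds there.
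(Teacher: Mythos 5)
Your proof is correct and matches the paper's approach: the paper simply states that the lemma follows from Definitions \ref{def:y^-1} and \ref{def:x^-1}, and your case analysis on the last letter of a word (plus the separate treatment of $\m1$) is exactly the unwinding that justification entails.
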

\begin{proof}
Follows from Definitions \ref{def:y^-1} and \ref{def:x^-1}. 
\end{proof}

\noindent Using the map $\zeta$ and Lemmas \ref{lem:zetainvariant}, \ref{lem:zetax-1y-1}, we obtain the following collection of identities. For completeness, we restate some identities proved earlier in the paper. 

\begin{proposition}\label{prop:altformulas1}\rm
For $m \in \mZ$ and $n \geq 1$, 
\begin{equation}\label{eq:N^m_nrecursionx}
\nabla^{(m)}_{n+1}=\frac{\left(q^m x \star \nabla^{(m)}_n-q^{-m} \nabla^{(m)}_n \star x\right)y}{q-q^{-1}}, 
\end{equation}
\begin{equation}\label{eq:N^m_nrecursiony}
\nabla^{(m)}_{n+1}=\frac{x\left(q^m \nabla^{(m)}_n \star y-q^{-m} y \star \nabla^{(m)}_n\right)}{q-q^{-1}}. 
\end{equation}
\end{proposition}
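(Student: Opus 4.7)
The plan is to observe that equation \eqref{eq:N^m_nrecursionx} is simply a restatement of Proposition \ref{prop:N^m_nrecursion}, which has already been proved. So essentially all the work goes into deriving equation \eqref{eq:N^m_nrecursiony}, and my strategy is to obtain it from \eqref{eq:N^m_nrecursionx} by applying the antiautomorphism $\zeta$ of the $q$-shuffle algebra introduced in Definition \ref{def:zeta}.

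Concretely, I would apply $\zeta$ to both sides of \eqref{eq:N^m_nrecursionx}. On the left, Lemma \ref{lem:zetainvariant} gives $\zeta(\nabla^{(m)}_{n+1})=\nabla^{(m)}_{n+1}$. On the right, I would use that $\zeta$ is an antiautomorphism with respect to both the free product and the $q$-shuffle product, together with $\zeta(x)=y$, $\zeta(y)=x$, and once again $\zeta(\nabla^{(m)}_n)=\nabla^{(m)}_n$. This converts
\[
\bigl(q^m x \star \nabla^{(m)}_n - q^{-m} \nabla^{(m)}_n \star x\bigr)y
\]
into
\[
x\bigl(q^m \nabla^{(m)}_n \star y - q^{-m} y \star \nabla^{(m)}_n\bigr),
\]
which is precisely the right-hand side of \eqref{eq:N^m_nrecursiony} up to the scalar $q-q^{-1}$.

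There is no real obstacle here: the argument is a mechanical application of the antiautomorphism property. The only point worth verifying carefully is the order reversal of the two $q$-shuffle factors and of the outer free-product factor $y$ (which becomes a prepended $x$ after applying $\zeta$), since it is easy to get a sign or order wrong. Once that bookkeeping is confirmed, both \eqref{eq:N^m_nrecursionx} and \eqref{eq:N^m_nrecursiony} are established and the proposition follows.
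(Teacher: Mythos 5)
Your proposal is correct and is exactly the paper's own argument: equation \eqref{eq:N^m_nrecursionx} is quoted from Proposition \ref{prop:N^m_nrecursion}, and \eqref{eq:N^m_nrecursiony} is obtained by applying the antiautomorphism $\zeta$ to it, using Lemma \ref{lem:zetainvariant} and the reversal of both products. The bookkeeping you flag checks out, so nothing further is needed.
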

\begin{proof}
Equation \eqref{eq:N^m_nrecursionx} is from Proposition \ref{prop:N^m_nrecursion}. Equation \eqref{eq:N^m_nrecursiony} is obtained by applying $\zeta$ to \eqref{eq:N^m_nrecursionx}. 
\end{proof}

\begin{proposition}\label{prop:altformulas2}\rm
For $m \in \mZ$ and $n \in \mN$, 
\begin{equation}\label{eq:D^m_nrecursionx}
\Delta^{(m)}_{n+1}=\frac{\left(q^m x \star \Delta^{(m)}_n-q^{-m} \Delta^{(m)}_n \star x\right)y}{q-q^{-1}}, 
\end{equation}
\begin{equation}\label{eq:D^m_nrecursiony}
\Delta^{(m)}_{n+1}=\frac{x\left(q^m \Delta^{(m)}_n \star y-q^{-m} y \star \Delta^{(m)}_n\right)}{q-q^{-1}}. 
\end{equation}
\end{proposition}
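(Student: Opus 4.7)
The first identity \eqref{eq:D^m_nrecursionx} has already been established as Proposition \ref{prop:D^m_nrecursion}, so I will simply cite it. The plan is to derive \eqref{eq:D^m_nrecursiony} from \eqref{eq:D^m_nrecursionx} by applying the antiautomorphism $\zeta$ to both sides, exactly as was done in Proposition \ref{prop:altformulas1} for the $\nabla$ version.

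I would proceed as follows. First, apply $\zeta$ to the left-hand side of \eqref{eq:D^m_nrecursionx}; by Lemma \ref{lem:zetainvariant}, this returns $\Delta^{(m)}_{n+1}$. Next, apply $\zeta$ to the right-hand side. Since $\zeta$ is an antiautomorphism with respect to both the free product and the $q$-shuffle product, and since $\zeta(x)=y$ and $\zeta(y)=x$, we have
\begin{equation*}
\zeta(x \star \Delta^{(m)}_n) = \zeta(\Delta^{(m)}_n) \star y = \Delta^{(m)}_n \star y, \qquad \zeta(\Delta^{(m)}_n \star x) = y \star \zeta(\Delta^{(m)}_n) = y \star \Delta^{(m)}_n,
\end{equation*}
using Lemma \ref{lem:zetainvariant} in the second equality of each. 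Multiplying by $y$ on the right in the free algebra becomes multiplying by $x$ on the left after applying $\zeta$. Assembling the pieces yields precisely \eqref{eq:D^m_nrecursiony}.

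There is no substantial obstacle here: the entire argument rests on the antiautomorphism property of $\zeta$ on the $q$-shuffle algebra (recorded just after Definition \ref{def:zeta}) and the $\zeta$-invariance of $\Delta^{(m)}_n$ established in Lemma \ref{lem:zetainvariant}. The only minor bookkeeping point is to track that the scalars $q^m$ and $q^{-m}$ are unaffected by $\zeta$, and that the outer factor $1/(q-q^{-1})$ pulls through unchanged as well.
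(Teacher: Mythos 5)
Your proposal is correct and matches the paper's proof exactly: the paper also cites Proposition \ref{prop:D^m_nrecursion} for \eqref{eq:D^m_nrecursionx} and obtains \eqref{eq:D^m_nrecursiony} by applying $\zeta$, relying on the antiautomorphism property and Lemma \ref{lem:zetainvariant}. Your write-up just spells out the bookkeeping that the paper leaves implicit.
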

\begin{proof}
Equation \eqref{eq:D^m_nrecursionx} is from Proposition \ref{prop:D^m_nrecursion}. Equation \eqref{eq:D^m_nrecursiony} is obtained by applying $\zeta$ to \eqref{eq:D^m_nrecursionx}. 
\end{proof}

\begin{proposition}\label{prop:altformulas3}\rm
For $m \in \mZ$, 
\begin{equation}\label{eq:D^m(t)1}
\Delta^{(m)}(t)y^{-1}=q^m[m]_q \nabla^{(0)}(q^{-m}t)y^{-1} \star \Delta^{(m)}(t), 
\end{equation}
\begin{equation}\label{eq:D^m(t)2}
\Delta^{(m)}(t)y^{-1}=q^{-m}[m]_q \Delta^{(m)}(t) \star \nabla^{(0)}(q^mt)y^{-1}, 
\end{equation}
\begin{equation}\label{eq:D^m(t)3}
x^{-1}\Delta^{(m)}(t)=q^m[m]_q \Delta^{(m)}(t) \star x^{-1}\nabla^{(0)}(q^{-m}t), 
\end{equation}
\begin{equation}\label{eq:D^m(t)4}
x^{-1}\Delta^{(m)}(t)=q^{-m}[m]_q x^{-1}\nabla^{(0)}(q^mt) \star \Delta^{(m)}(t). 
\end{equation}
\end{proposition}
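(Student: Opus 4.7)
The plan is to recognize that the first two identities \eqref{eq:D^m(t)1} and \eqref{eq:D^m(t)2} are exactly the two parts of Proposition \ref{prop:D^m_-(t)} rewritten, so nothing new needs to be done for them; one only needs to cite that proposition. The remaining two identities \eqref{eq:D^m(t)3} and \eqref{eq:D^m(t)4} are the $\zeta$-images of \eqref{eq:D^m(t)1} and \eqref{eq:D^m(t)2}, so the strategy is to apply the antiautomorphism $\zeta$ termwise.

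Concretely, I would proceed as follows. First, extend $\zeta$ coefficient-wise to power series in $t$, so that it acts on $\Delta^{(m)}(t)$ and $\nabla^{(0)}(t)$. By Lemma \ref{lem:zetainvariant}, $\zeta$ fixes each $\Delta^{(m)}_n$ and each $\nabla^{(0)}_n$, hence $\zeta(\Delta^{(m)}(t))=\Delta^{(m)}(t)$ and $\zeta(\nabla^{(0)}(t))=\nabla^{(0)}(t)$. Then apply $\zeta$ to both sides of \eqref{eq:D^m(t)1}. On the left, Lemma \ref{lem:zetax-1y-1} gives
\begin{equation*}
\zeta(\Delta^{(m)}(t)y^{-1}) = x^{-1}\zeta(\Delta^{(m)}(t)) = x^{-1}\Delta^{(m)}(t).
\end{equation*}
On the right, using that $\zeta$ is an antiautomorphism of the $q$-shuffle algebra together with Lemma \ref{lem:zetax-1y-1},
\begin{equation*}
\zeta\bigl(\nabla^{(0)}(q^{-m}t)y^{-1} \star \Delta^{(m)}(t)\bigr)
= \zeta(\Delta^{(m)}(t)) \star \zeta(\nabla^{(0)}(q^{-m}t)y^{-1})
= \Delta^{(m)}(t) \star x^{-1}\nabla^{(0)}(q^{-m}t).
\end{equation*}
This yields \eqref{eq:D^m(t)3}. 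Applying $\zeta$ to \eqref{eq:D^m(t)2} in the same way produces \eqref{eq:D^m(t)4}.

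There is no real obstacle here, since all the machinery is already in place in Appendix A: the antiautomorphism property of $\zeta$ on the $q$-shuffle algebra (stated just after Definition \ref{def:zeta}), the $\zeta$-invariance of the generating-function coefficients (Lemma \ref{lem:zetainvariant}), and the interaction of $\zeta$ with the $y^{-1}$ and $x^{-1}$ operators (Lemma \ref{lem:zetax-1y-1}). The only minor point to check is that the extension of $\zeta$ to the formal power series ring $\mV[[t]]$ commutes with the substitutions $t \mapsto q^{\pm m}t$, but this is immediate since $\zeta$ is $\mF$-linear and acts only on the word coefficients, not on the scalar parameter $t$.
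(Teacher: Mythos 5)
Your proposal is correct and follows exactly the paper's own proof: cite Proposition \ref{prop:D^m_-(t)} for \eqref{eq:D^m(t)1} and \eqref{eq:D^m(t)2}, then apply the antiautomorphism $\zeta$ (using Lemmas \ref{lem:zetainvariant} and \ref{lem:zetax-1y-1}) to obtain \eqref{eq:D^m(t)3} and \eqref{eq:D^m(t)4}. The extra remark about $\zeta$ commuting with the substitution $t \mapsto q^{\pm m}t$ is a harmless additional sanity check.
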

\begin{proof}
Equations \eqref{eq:D^m(t)1}, \eqref{eq:D^m(t)2} are from Proposition \ref{prop:D^m_-(t)}. Equations \eqref{eq:D^m(t)3}, \eqref{eq:D^m(t)4} are obtained by applying $\zeta$ to \eqref{eq:D^m(t)1}, \eqref{eq:D^m(t)2} respectively. 
\end{proof}

\section{Acknowledgments}
The author is currently a Math Ph.D. student at the University of Wisconsin-Madison. The author would like to thank his supervisor, Professor Paul Terwilliger, for proofreading and giving many helpful comments.

\bigskip
\noindent Chenwei Ruan \\
Department of Mathematics \\
University of Wisconsin \\
480 Lincoln Drive \\
Madison, WI 53706-1388 USA \\
email: {\tt cruan4@wisc.edu }

\end{document}